\newtheorem {definition}{Definition}
\newtheorem{proposition}[definition]{Proposition}
\newtheorem {lemma}[definition]{Lemma}
\newtheorem {thm}[definition]{Theorem}
\newtheorem {rem}[definition]{Remark}
\numberwithin{definition}{section}
\numberwithin{equation}{section}
\newcommand{\cA}{{\mathcal A}}
\newcommand{\cB}{{\mathcal B}}
\newcommand{\cD}{{\mathcal D}}
\newcommand{\cG}{{\mathcal G}}
\newcommand{\cH}{{\mathcal H}}
\newcommand{\cN}{{\mathcal N}}
\newcommand{\cP}{{\mathcal P}}
\newcommand{\cQ}{{\mathcal Q}}
\newcommand{\D}{\mathbb{D}}
\newcommand{\E}{E}
\newcommand{\N}{\mathbb{N}}
\renewcommand{\P}{\mathbb{P}}
\newcommand{\EE}{\mathbb{E}}
\newcommand{\R}{\mathbb{R}}
\newcommand{\eps}{\varepsilon}
\newcommand{\dd}{\mathsf{d}}
\newcommand{\ee}{\mathsf{e}}
\newcommand{\mvb}{\mathversion{bold}}
\begin{document}
\title{A Jump Type SDE Approach \\to Positive Self-Similar Markov Processes}
\author{Leif D\"oring}
\thanks{L. D\"oring has been supported by the Foundation Science Mat\'ematiques de Paris}
\address{Leif D\"oring, Laboratoire de Probabilit\'es et Mod\`{e}les Al\'eatoires Universit\'e Paris 6, 4 place Jussieu, 75252 Paris Cedex 05, France}
\email{leif.doering@googlemail.com}
\thanks{
 This work has been undertaken while M. Barczy was on a post-doctoral position
 at the Laboratoire de Probabilit\'es et Mod\`{e}les Al\'eatoires,
 University Pierre-et-Marie Curie, Paris, thanks to NKTH-OTKA-EU FP7 (Marie Curie action)
 co-funded 'MOBILITY' Grant No. OMFB-00610/2010.
 M. Barczy has been also supported by the Hungarian Scientific Research Fund under Grant No.\ OTKA T-079128.
}
\author{M\'aty\'as Barczy}
\address{M\'aty\'as Barczy, University of Debrecen, Faculty of Informatics, Pf.12, H-4010 Debrecen, Hungary}
\email{barczy.matyas@inf.unideb.hu}

\subjclass[2010]{Primary 60G18, 60H15; Secondary 60G51}

\keywords{L\'evy process, self-similar Markov process, Lamperti's transformation, jump type SDE}

\begin{abstract}
We present a new approach to positive self-similar Markov processes (pssMps) by
 reformulating Lamperti's transformation via jump type SDEs.
As applications, we give direct constructions of pssMps (re)started continuously at zero  if the Lamperti transformed L\'evy process is spectrally negative. Our paper can be seen as a continuation of similar studies for continuous state branching processes.
\end{abstract}

\maketitle

\section{Introduction}

Positive self-similar Markov processes (pssMps) play an important role in various branches of probability theory such as branching processes and fragmentation theory. In recent years two problems concerning the behavior of pssMps at zero have attracted a lot of attention.
Solutions were heavily based on L\'evy process theory.
The main objective of this paper is to derive a jump type stochastic differential equation (SDE) for
 pssMps and prove well-posedness.
As application of this approach we derive a new representation of pssMps (re)started continuously at zero
 by applying stochastic calculus.

\subsection{Lamperti's transformation and ''problems at zero''}

We will use the following canonical notations.
Let $\R_+$ denote the set of non-negative real numbers
 and let $\D$ be the space of c\`{a}dl\`{a}g functions $\omega:\R_+\to\R_+$ (right continuous with left limits)
 endowed with the Borel sigma-field $\cD$ generated by Skorokhod's $J_1$ topology.
Let us consider a family of probability measures $\{\P_z,z\geq 0\}$ on $(\D,\cD)$ under which the coordinate
 process $Z_t(\omega):=\omega(t)$, $t\geq 0$, is a strong Markov process starting from $z$ and fulfills
 the following scaling property: There exists a constant $a>0$, called the index of self-similarity, such that
 \begin{align}\label{self_sim}
   \text{the law of $(c^{-a}Z_{ct})_{t\geq 0}$ under $\P_z$ is $\P_{c^{-a}z}$}
 \end{align}
 for all $c>0$ and $z\geq 0$.
These laws are called non-negative self-similar Markov distributions and simultaneously the canonical process
 $Z$ on the probability space $(\D,\cD,\P_z)$ is called a non-negative self-similar Markov process started
 from $z$.
Let us denote the corresponding process absorbed at the (possibly infinite) first hitting time $T_0$ of
 zero by
 \begin{align*}
	Z^\dag_t:=Z_t \mathbf 1_{\{t \leq T_0\}},\quad t\geq 0, \quad\text{with}\quad T_0:=\inf\{t\geq 0: Z_t=0\}.
 \end{align*}
The laws of the process $(Z^\dag_t)_{t\geq 0}$ under $\{\P_z,z\geq 0\}$, denoted by $\{\P^\dag_z, z\geq 0\}$,
 are non-negative self-similar Markov distributions with the same index of self-similarity.
They will be called positive self-similar Markov distributions and simultaneously, $Z^\dag$ is called
 a positive self-similar Markov process (pssMp).
Note that $\P_0^\dag$ is the law of the process identical to $0$, and that
 $0$ is a trap for $Z^\dag$, i.e. the corresponding laws $\{\P^\dag_z, z\geq 0\}$ are
 carried by $\{\omega\in\D : \omega(t)=0, \forall\, t\geq T_0(\omega)\}$.
Alternatively, by a family of positive self-similar Markov distributions we mean a self-similar
 strong Markov family carried by $\{\omega\in\D : \omega(t)=0, \forall\, t\geq T_0(\omega)\}$.
It is important to note that, by our convention, the difference between positive and non-negative self-similar
 Markov processes is only that pssMps always have $0$ as a trap.
The formulation of pssMps might seem inconvenient but allows us for a more natural formulation of the results.
\medskip

The systematic study of pssMps goes back to the 1960s, in particular to Lamperti's seminal article
 \cite{L}.
Lamperti \cite[Lemmas 2.5 and 3.2]{L} showed that the following trichotomy holds:
	\begin{enumerate}
		\item either $\P^\dag_z(T_0=\infty)=1$ for all $z>0$,
		\item or $\P^\dag_z(T_0<\infty\text{ and }Z_{T_0-}=0)=1$ for all $z>0$,
		\item or $\P^\dag_z(T_0<\infty\text{ and }Z_{T_0-}>0)=1$ for all $z>0$.
	\end{enumerate}
Let us next recall what is typically referred to as Lamperti's transformation (or Lamperti's representation).
If $Z$ is a pssMp of self-similarity index $a>0$ starting from $z>0$, then there is a
 possibly killed L\'evy process $(\xi_t)_{t\geq 0}$ {(whose law does not depend on $z$)}
 such that the canonical process $Z$ can be written as
 \begin{align}\label{LT}
   Z_t= z\exp\big(\xi_{\tau({tz^{-1/a})}} \big),\qquad 0\leq t<T_0,
 \end{align}
 where 	
 \begin{align*}
	\tau(t):=\inf\{s\geq 0:I_s\geq t\}
  \qquad \text{and}\qquad I_t:=\int_0^t \exp\left(\frac{1}{a}\xi_s\right) \dd s,\qquad t\geq 0.
	\end{align*}
Since the transformation is invertible it yields a bijection between the class of pssMps
 and that of L\'evy processes and furthermore it implies
 \begin{align*}
  &(1) \qquad \Longleftrightarrow \qquad \text{$\xi$ has infinite lifetime and}\;\,
               \P^\dag_z(\limsup_{t\to\infty}\xi_t=\infty)=1, \forall\; z>0,\\
  &(2) \qquad \Longleftrightarrow \qquad \text{$\xi$ has infinite lifetime and}\;\,
               \P^\dag_z(\lim_{t\to\infty}\xi_t=-\infty)=1, \forall\; z>0,\\
 &(3)\qquad \Longleftrightarrow \qquad  \text{$\xi$ has finite lifetime, i.e.
                                      is killed {at} rate $q>0$ {with $-\infty$ as a cemetery.}}
 \end{align*}
Thus, with our definition of pssMps, (\ref{LT}) holds equally for $t\geq 0$ with the convention
 $\tau(t z^{-1/a})=\infty$ for $t\geq T_0$.

Lamperti's representation has two drawbacks: first, it has no counterpart for non-negative self-similar Markov processes that do not have $0$ as a trap since the infinite time-horizon $[0,\infty)$ is compressed via $\tau$
 to the possibly finite time-horizon $[0,T_0)$ and {for such a non-negative self-similar Markov process}
 \eqref{LT} is valid only up to $T_0$.
Hence, the entire path of $\xi$ is already used to describe $Z$ until $T_0$.
Secondly, in all cases Lamperti's representation holds for strictly positive initial conditions $z>0$ but
 not for $z=0$.
Natural questions are then how to describe self-similar extensions of $(Z_t)_{t\in[0,T_0)}$ after $T_0$
 and how to describe self-similar Markov processes started from zero.
In the sequel we shall refer to these two problems as the "problems at zero".
The problems at zero have been resolved in recent years based on criteria involving the
 Lamperti transformed L\'evy process $\xi$.

Apparently, the first problem only occurs if $\xi$ drifts to $-\infty$ {- which includes being killed in {an almost surely}
 finite time - } as otherwise $T_0=\infty$ almost surely.
In the special case when $Z$ is a Brownian motion killed at $0$, the problem was already solved by Lamperti
 \cite[Theorem 5.1]{L} and then the general case subsequently was tackled by Rivero \cite{R1} and
 Vuolle-Apiala \cite{VA}.
Finally, it has been completely solved by Fitzsimmons \cite[{Theorem 1}]{F} and Rivero \cite[{Theorem 2}]{R2}
 who have independently proved that {the existence of a unique recurrent self-similar extension that leaves
 $0$ continuously is equivalent to the following Cram\'er type condition
  \begin{align}\label{cramer}
		\text{there is a }0<\theta<\frac{1}{a}\text{ such that }\EE\big(\ee^{\theta \xi_1}; \zeta>1\big)=1,
 \end{align}
 where $\zeta$ denotes the possibly infinite killing time of $\xi$.}

Since the law of $\xi$ does not depend on $z>0$, in condition \eqref{cramer}
 we simply wrote $\EE$ instead of $\EE^\dag_z$ (omitting also $\dag$).
We recall that a recurrent self-similar extension of a pssMp $\{\P_z^\dag, z\geq 0\}$ is a non-negative
 self-similar Markov process $\{\P_z, z\geq 0\}$ on $(\D,\cD)$ of same index of self-similarity
 such that the law of the process $(Z^\dag_t)_{t\geq 0}$ under $\{\P_z, z\geq 0\}$ equals the law of
 $(Z_t)_{t\geq 0}$ under $\{\P_z^\dag, z\geq 0\}$ and $0$ is not a trap for $\{\P_z, z\geq 0\}$.
The recurrent self-similar extension $\{\P_z, z\geq 0\}$ is said to leave zero continuously
 if additionally
 \[
    \P_0(Z_s =0 \;\,\text{for all $s\in G$})=1,
 \]
 where $G$ denotes the set of strictly positive left endpoints of the maximal intervals in the complement
 of the closure of the zero set $\{t\geq 0 : Z_t =0\}$.
For more information, in particular on the uniqueness of the extensions, see Fitzsimmons \cite{F}
 and Rivero \cite{R1}, {\cite{R2}}.
In both articles, the recurrent extensions were characterized via their excursion measures.

The second problem can be formalized as follows: Does weak convergence
	w-$\lim_{z\downarrow 0}\P^\dag_z= \P_0$
hold for a non-degenerate weak limit $\P_0$ and, if so, how can $\P_0$ be characterized?
Both tasks were solved subsequently by Bertoin and Caballero \cite{BC}, Bertoin and Yor \cite{BY},
 Caballero and Chaumont \cite{CC} and Chaumont et al. \cite{CKPR}.
We should mention that in cases (2) and (3) the first problem at zero contains the second problem at zero, since
 the first hitting time $T_0$ of zero tends to zero almost surely for initial condition $z$ tending to zero.
Hence, in cases (2) and (3) the solution was already given by Fitzsimmons \cite{F} and Rivero \cite{R1}, {\cite{R2}}
 so that case (1) can be assumed.
The main result is due to Caballero and Chaumont \cite[Theorem 2]{CC}: the weak limit
 w-$\lim_{z\downarrow 0}\P_z^\dag=\P_0$ exists if and only if the overshoot process
 \begin{align*}
		\xi_{T_x}-x,\;x\geq 0,\quad\text{with }\quad T_x:=\inf\{t\geq 0:\xi_t\geq x\},
  \end{align*}
 converges, as $x$ tends to $+\infty$, weakly towards the law of a finite random variable and a further
 technical condition on $\xi$ is satisfied. It was later shown in Chaumont et al. \cite{CKPR}
 that the additional assumption is superfluous.
The convergence of the overshoot process has an equivalent analytic reformulation.
According to Doney and Maller \cite[Theorem 8]{DonMal}, see also the discussion on page 1014 of Caballero
 and Chaumont \cite{CC}, it is equivalent to
 	\begin{align*}
    		\xi\text{ is not arithmetic and }\begin{cases}
			\text{ either } 0<\EE(\xi_1)\leq \EE(|\xi_1|)<\infty,\\
			\text{ or } \EE(|\xi_1|)<\infty, \EE(\xi_1)=0\text{ and } J<\infty,
		\end{cases}
	\end{align*}
	where
	\begin{align*}
		J:=\int_1^\infty \frac{x\Pi((x,\infty))}
              {1+\int_0^x\int_y^\infty \Pi((-\infty,-z))\,\dd z\, \dd y}
                \,\dd x,
	 \end{align*}
 and $\Pi$ denotes the L\'evy measure of $\xi$.
Finally, let us mention that the construction of $\P_0$ of Caballero and Chaumont \cite{CC} is based
 on involved L\'evy process theory and not easily accessible.
A simpler construction of $\P_0$ was given in Bertoin and Savov \cite{BS}
 via L\'evy processes indexed by $\R$.

\medskip

We will show below how to use jump type SDEs to construct the extensions in all cases.
This comes directly from Lamperti's representation which, surprisingly, offers more insight
 if it is reformulated via jump type SDEs. Furthermore, it seems reasonable that more applications, in particular for real-valued self-similiar Markov processes, can be deduced via the powerful methods of stochastic analysis.

\subsection{A motivating example}\label{sec:ex}
Let us start with a simple motivation by considering the only pssMp of self-similarity index $1$ with
 continuous sample paths. If $\xi$ is a Brownian motion with drift, say
 $\xi_t=\big(\delta-2\big)t + 2 B_t$, $t\geq 0$, $\delta\in\R$, then the Lamperti transformed pssMp is $Z^\dag$, where $Z$ is the non-negative self-similar Markov process of index $1$ defined by
 \begin{align}\label{sb}
		\dd Z_t = \delta \dd t+2\sqrt{Z_t}\,\dd B_t.
 \end{align}
 For classical results on squared Bessel processes we refer to Chapter XI of Revuz and Yor \cite{RY};
 recall in particular that for $\delta\geq 0$ and initial condition $z\geq 0$, (\ref{sb}) has a unique strong solution defined for $t\geq 0$ and that $Z$ hits zero in finite time
 if and only if $\delta<2$.
In case $\delta< 0$ solutions exist only up to first hitting zero and are then extended by $0$.
The starting point of our analysis is the simple identity $\log \EE(\ee^{\xi_1})=\delta$,
 so that (\ref{sb}) can be written as
	\begin{align}\label{sb2}
		\dd Z_t=\log \EE(\ee^{\xi_1})\,\dd t+2 \sqrt{Z_t}\dd B_t.
	\end{align}
Thus, precisely if $\log \EE(\ee^{\xi_1})>0$ it has unique strong solutions, defined for all $t\geq 0$
 and for all non-negative initial conditions, which do not have $0$ as a trap.
This gives the following explanation for the two problems in case of leaving zero continuously:
\begin{itemize}
	\item If $Z^\dag$ hits zero in finite time and $\log \EE(\ee^{\xi_1})>0$,
           then the unique recurrent self-similar extension {of $Z^\dag$} that leaves zero continuously
           is nothing else but the unique strong solution $Z$ of (\ref{sb2}) for $t\geq 0$.
	\item If $Z^\dag$ does not hit zero in finite time (equivalently $\log \EE(\ee^{\xi_1})\geq 2$),
          then w-$\lim_{z\downarrow 0}\P_z=\P_0$ and $\P_0$ is nothing else but
          the unique strong solution of the SDE \eqref{sb2} started at zero (see for instance Stroock and Varadhan \cite[Corollary 11.1.5]{StrVar} for the convergence).
\end{itemize}
	For the particular example of the squared Bessel process we found naturally the condition $\log \EE(\ee^{\xi_1})>0$ but not Condition (\ref{cramer}). This can be seen {here from} the convexity of the Laplace exponent defined by
 $\psi(\lambda):=\log \EE(\ee^{\lambda \xi_1})$  for $\lambda\in\R:$
recalling that
 \begin{align}\label{a}
 	Z^\dag \text{ hits zero in finite time}\quad \Longleftrightarrow \quad \xi\text{ drifts to }-\infty\quad \Longleftrightarrow \quad \psi'(0+)=\EE(\xi_1)<0,
 \end{align}
the convexity of $\psi$ implies the equivalence
	\begin{align}\label{a_added}
		\psi(1)= \log \EE(\ee^{\xi_1}) >0
          \quad\Longleftrightarrow \quad
       \text{Condition }   (\ref{cramer}) \text{ holds with $a=1$ and $q=0$,}
	\end{align}
if $\xi$ drifts to $-\infty$.
{Note that \eqref{a} and \eqref{a_added} hold more generally for spectrally negative L\'evy processes
 possibly with killing, see, e.g. Kyprianou \cite[page 81]{K}.}

\subsection{A brief reminder of jump type SDEs for branching processes}\label{CSBPsection}

{To motivate our approach let us recall a development
 in the theory of continuous state branching processes (CSBPs),
 i.e. $\R_+$-valued c\`{a}dl\`{a}g strong Markov processes that satisfy the branching property
 $\P_x\ast \P_y=\P_{x+y}$, $x,y\geq 0$.
Lamperti \cite{L2} proved that any CSBP $Y$ can be represented as
	\begin{align*}
		{Y_t}=\xi_{\bar\tau(t)},\quad t\geq 0,\quad \text{ where }\quad	
           \bar\tau(t):=\inf\{t\geq 0: \bar I_s\geq t \}
             \quad\text{ and }\quad \bar I_t:=\int_0^{\min(t, \bar T_0)} \frac{1}{\xi_s}\,\dd s,
	\end{align*}
    where $\xi$ is a L\'evy process with no negative jumps started at $y\geq 0$ and
    $\bar T_0$ is the first time that $\xi$ hits $0$.
By Volkonskii's formula (see, e.g. Williams \cite[Section III.38]{W}) this time-change representation
 is equivalent to the statement that the infinitesimal generator of {$Y$} has the form
 $\mathcal A_{ Y} f(y)=y \mathcal A_\xi f(y)$ {for sufficiently smooth functions $f$}, where
	\begin{align*}
		\mathcal A_\xi f(y)=\gamma f'(y)+\frac{\sigma^2}{2}f''(y)
           +\int_0^\infty (f(y+u)-f(y)-f'(y)u\mathbf 1_{\{u\leq 1\}})\Pi(\dd u),
	\end{align*}
 is the infinitesimal generator of the L\'evy process $\xi$ with L\'evy triplet $(\gamma,\sigma^2,\Pi)$.
Recall that $\gamma\in \R$, $\sigma>0$ and $\Pi$ is a deterministic measure on $\R$ satisfying
 $\int_{\R}\min(1,u^2)\Pi(\dd u)<\infty$.
From the generator $\mathcal A_{Y}$ it is not hard to guess that ${Y}$ is a (weak) solution
 to the jump type SDE
	\begin{align}\label{de}
		\begin{split}
			{Y}_t&=y+\gamma\int_0^t{Y}_s\,\dd s+\sigma\int_0^t \sqrt{{Y}_s}\dd B_s\\
		&\quad+\int_0^t\int_0^{{Y}_{s-}}\int_0^1u \,(\mathcal{N-N'})(\dd s,\dd r,\dd u)
         +\int_0^t\int_0^{{Y}_{s-}}\int_1^\infty u\, \mathcal{N}(\dd s, \dd r,\dd u),
         \quad t\geq 0,
		\end{split}
	\end{align}
where $\mathcal N$ is a Poisson random measure independent of the Brownian motion $B$ having
 intensity measure $\mathcal N'(\dd s,\dd r,\dd u)=\dd s\otimes \dd r\otimes \Pi(\dd u)$. Interestingly, $Y$ is not only a weak solution to (\ref{de}) but actually a strong solution since pathwise uniqueness holds. This non-trivial additional fact, due to  Dawson and Li \cite[Section 5]{DL_affine}, suggests that the jump type SDE (\ref{de}) might be a strong tool to study CSBPs.
For further reading on Lamperti's transformation for CSBPs we refer to the overview article Caballero et al. \cite{CLU}.

\section{Results}	
\subsection{A jump type SDE approach to positive self-similar Markov processes }\label{sec:jumpdif}
Mirroring the jump type SDE approach to CSBPs we now derive a jump type SDE for pssMps of self-similarity index $1$.
This can be seen as natural generalization of (\ref{sb2}).
According to the discussion for CSBPs, using Volkonskii's formula and Lamperti's transformation (\ref{LT})
 with $a=1$, one gets the infinitesimal generator identity
  $\mathcal A_Z f(z)=\frac{1}{z} \mathcal A_{\ee^\xi} f(z)$ for sufficiently smooth functions {$f$}.
In order to derive from this a jump type SDE we proceed in two steps. {Recall that}
  the L\'evy-It\=o representation of $\xi$ takes the form
 \begin{align}\label{li}
   \xi_t=\gamma t + \sigma B_t+\int_0^t\int_{|u|\leq 1} u \,(\mathcal{N}_0-\mathcal{N}_0')(\dd s,\dd u)
                      +\int_0^t\int_{|u|> 1} u\, \mathcal{N}_0\,(\dd s,\dd u),\quad t\geq 0,
 \end{align} where $(\gamma,\sigma^2,\Pi)$ is the L\'evy triplet of $\xi$,
 $B$ is a standard Wiener process and $\mathcal{N}_0$ is an independent Poisson random measure
  on $(0,\infty)\times \R$ with intensity measure $\mathcal{N}_0'(\dd s,\dd u)=\dd s\otimes \Pi(\dd u)$.
  {First, applying} It\=o's formula to $z\ee^{\xi_t}$
 and afterwards including the generator correction $1/z$ similarly as the correction $y$ in  $\mathcal A_{ Y} f(y)=y \mathcal A_\xi f(y)$ is added to (\ref{de}), our {Ansatz - which is verified below in Proposition \ref{prop:reversed} - }is the jump type SDE
	 \begin{align}\label{ab}
		  \begin{split}
		Z_t&=z+\Big(\gamma+\frac{\sigma^2}{2}+\int_{|u|\leq 1}(\ee^u-1-u)\Pi(\dd u)\Big)t
              + \sigma\int_0^{t } \sqrt{Z}_s \dd B_s\\
		&\quad + \int_0^{t }\int_0^{\infty}\int_{|u|\leq 1}\mathbf 1_{\{r Z_{s-}\leq 1\}}Z_{s-}(\ee^u-1)
                             (\mathcal N-\mathcal N')(\dd s,\dd r,\dd u)\\
		&\quad+\int_0^{t}\int_0^{\infty}\int_{|u|>1}\mathbf 1_{\{r Z_{s-}\leq 1\}}Z_{s-}(\ee^u-1)
                 \mathcal N(\dd s,\dd r,\dd u)
		  \end{split}
	\end{align}
  which, as in Lamperti's transformation \eqref{LT}, is a priori restricted to $t\leq T_0$,
  since the constant drift might be negative.	
Here, $B$ is a standard Wiener process and $\mathcal N$ is an independent Poisson random measure
 on $(0,\infty)\times (0,\infty)\times \R$ with intensity measure
 $\mathcal N'(\dd s,\dd r,\dd u)=\dd s\otimes \dd r \otimes \,\Pi(\dd u)$.
On  first view there is not too much to learn from this reformulation
 of Lamperti's transformation (\ref{LT}).
However, if we additionally assume that $E(\ee^{\xi_1})<\infty$, then one can add and subtract the compensation
 of large jumps to (\ref{ab}), and we get
 	\begin{align*}
		Z_t&=z+\left(\gamma + \frac{\sigma^2}{2}+\int_\R(\ee^u-1-u\mathbf 1_{\{\vert u\vert\leq 1\}})
                \,\Pi(\dd u) \right)t
               + \sigma\int_0^{t } \sqrt{Z_s} \dd B_s\\
           &\quad+\int_0^{t }\int_0^{\infty}\int_{\R} \mathbf 1_{\{r Z_{s-}\leq 1\}}Z_{s-}[\ee^u-1]
               (\mathcal N-\mathcal N')(\dd s,\dd r,\dd u)
	\end{align*}
	for $t\leq T_0$.
 Simplifying via the L\'evy-Khintchin formula (more precisely the extension in Theorem 25.17 of Sato \cite{Sat}) we obtain
	\begin{align}\label{el}
     \begin{split}
		Z_t&=z+\big(\log \EE\big(\ee^{\xi_1}\big)\big)t + \sigma\int_0^{t } \sqrt{Z^{}_s} \dd B_s\\
     &\quad      +\int_0^{t }\int_0^{\infty}\int_{\R} \mathbf 1_{\{r Z_{s-}\leq 1\}}Z_{s-}[\ee^u-1]
               (\mathcal N-\mathcal N')(\dd s,\dd r,\dd u)
     \end{split}
	\end{align}
	for $t\leq T_0$. Let us now explain how to modify the Ansatz if we allow $\xi$ to be killed, i.e. set $\xi$ to
 the cemetery state ${-\infty}$ after {an} independent exponential lifetime $\zeta$. After taking the exponential for the first step of the derivation, this is equivalent to including jumps to zero at an independent exponential rate. As before, the time-change is included via an extra integral so that Equation (\ref{el}) transforms into
	\begin{align*}
     \begin{split}
		Z_t&=z+\big(\log \EE\big(\ee^{\xi_1}\big)\big)t + \sigma\int_0^{t } \sqrt{Z^{}_s} \dd B_s-\int_0^t\int_0^\infty\mathbf 1_{\{r Z_{s-}\leq 1\}} Z_{s-}\mathcal M(\dd s,\dd r)
\\
           &\quad+\int_0^{t }\int_0^{\infty}\int_{\R} \mathbf 1_{\{r Z_{s-}\leq 1\}}Z_{s-}[\ee^u-1]
               (\mathcal N-\mathcal N')(\dd s,\dd r,\dd u)
                  \end{split}
	\end{align*}
	for $t\leq T_0$, where $\mathcal M$ is an independent Poisson random measure on $(0,\infty)\times (0,\infty)$ with intensity measure $\mathcal M'(\dd s,\dd r)=q\dd s\otimes \dd r$ and $q\geq 0$ is the killing rate {(i.e. $\P(\zeta>1)=\ee^{-q}$).} Adding and subtracting the compensation gives
	\begin{align*}
     \begin{split}
		Z_t&=z+\big(\log \EE\big(\ee^{\xi_1}\big)-q\big)t + \sigma\int_0^{t } \sqrt{Z^{}_s} \dd B_s         -\int_0^t\int_0^\infty\mathbf 1_{\{r Z_{s-}\leq 1\}} Z_{s-}(\mathcal M-\mathcal M')(\dd s,\dd r)
\\
           &\quad+\int_0^{t }\int_0^{\infty}\int_{\R} \mathbf 1_{\{r Z_{s-}\leq 1\}}Z_{s-}[\ee^u-1]
               (\mathcal N-\mathcal N')(\dd s,\dd r,\dd u)
     \end{split}
	\end{align*}
	which is simplified to
	\begin{align}\label{eqn}
     \begin{split}
		Z_t&=z+\big(\log \EE\big(\ee^{\xi_1};\zeta>1\big)\big)t + \sigma\int_0^{t } \sqrt{Z_s} \dd B_s\\
           &\quad-\int_0^t\int_0^\infty\mathbf 1_{\{r Z_{s-}\leq 1\}} Z_{s-}(\mathcal M-\mathcal M')(\dd s,\dd r)\\
           &\quad+\int_0^{t }\int_0^{\infty}\int_{\R} \mathbf 1_{\{r Z_{s-}\leq 1\}}Z_{s-}[\ee^u-1]
               (\mathcal N-\mathcal N')(\dd s,\dd r,\dd u)\\
     \end{split}
	\end{align}
	for $t\leq T_0$.	
	 This is our generalization of (\ref{sb2}) which has a surprising advantage compared
 to Lamperti's original representation {\eqref{LT}}:

\begin{rem}\label{remark}
The Lamperti transformation \eqref{LT} only works for $t\leq T_0$, since, due to the time-change,
 the infinite time-horizon $[0,\infty)$ of $\xi$ is transformed into the {(possibly finite)} time-horizon $[0,T_0)$.
Nonetheless, if $0<\log \EE\big(\ee^{\xi_1};\zeta>1\big)<\infty$ then, due to the positive constant drift, the jump type SDE (\ref{eqn}) is not restricted to $t\leq T_0$
 but can be defined for all $t\geq 0$. This is not surprising since we already noted in Section \ref{sec:ex} that
 $\xi$ drifting to $-\infty$
 implies the equivalence
 \begin{align}\label{new1}
	\log \EE\big(\ee^{\xi_1};\zeta>1\big)>0\quad\Longleftrightarrow\quad  (\ref{cramer}) \text{ holds with } a=1
\end{align}
if the occurring quantities are finite. Hence, if we can show that (\ref{eqn})  is a "good" SDE, then the problems at zero for the corresponding pssMp can be explained as simple as for the squared Bessel process.
\end{rem}

To formulate our results, let us start with a rigorous definition of solutions that are constructed on a stochastic basis $(\Omega,\cG,(\cG_t)_{t\geq 0},P)$ satisfying the usual conditions.
We will use \textit{weak solutions}, i.e. $(\mathcal G_t)_{t\geq 0}$ adapted stochastic processes
 $(Z_t)_{t\geq 0}$ with almost surely c\`{a}dl\`{a}g paths that satisfy the SDE (\ref{eqn}) almost surely
 with some $(\cG_t)$-standard Wiener process $B$ and some independent $(\cG_t)$-Poisson random measure
 $\cN$ on $(0,\infty)\times (0,\infty)\times\R$ that has intensity measure
 $\cN'(\dd s,\dd r,\dd u) = \dd s\otimes \dd r\otimes \Pi(\dd u)$.
If additionally $Z$ is adapted to the augmented filtration generated by $B$ and $\mathcal N$, then $Z$ is said
 to be a \textit{strong solution}.
We say that \textit{pathwise uniqueness} holds for the SDE (\ref{eqn}) if for any two weak solutions
 $Z^1$ and $Z^2$ defined on the same probability space with the same standard Wiener process
 and Poisson random measure,  they are indistinguishable. Weak solutions for the SDE (\ref{eqn}) \textit{up to a first hitting time} are defined via the
 localized version of the corresponding martingale problem as in Chapter 4.6 of Ethier and Kurtz \cite{EK}.

\begin{thm}\label{thm:1}
Suppose that $\xi$ is a spectrally negative L\'evy process with L\'evy triplet $(\gamma,\sigma^2,\Pi)$ {killed at rate $q\geq 0$}. If $\log \EE\big(\ee^{\xi_1};\zeta>1\big)>0$, then
 \begin{itemize}
 	\item[(a)]  for any initial condition $z\geq 0$ there is
                a pathwise unique non-negative strong solution $(Z_t^{(z)})_{t\geq 0}$
                of the SDE (\ref{eqn}) associated to $\xi$ {killed at rate $q$},
    \item[(b)] the solutions $\{Z^{(z)},{z\geq 0}\}$ define a non-negative self-similar Markov process of self-similarity index $1$,
 {\item[(c)] the solutions $\{(Z^{(z)})^\dag,z\geq 0\}$ define a positive self-similar Markov process of index $1$ and its Lamperti transformed L\'evy process has L\'evy triplet
                 $(\gamma,\sigma^2,\Pi)$} {and is killed at rate $q$.}
 \end{itemize}
 \end{thm}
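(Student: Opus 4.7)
Under spectral negativity, every jump $Z_{s-}(\ee^u-1)$ in \eqref{eqn} lies in $[-Z_{s-},0]$ and the killing jump equals $-Z_{s-}$, while the square-root diffusion vanishes at $0$, so the SDE is structurally compatible with keeping $Z\geq 0$; combined with the strictly positive constant drift $c_0:=\log\EE(\ee^{\xi_1};\zeta>1)$, this suggests a global non-negative solution. I would prove (a) by separately establishing weak existence and pathwise uniqueness and then combining them through the Yamada-Watanabe theorem for jump-type SDEs. Weak existence for $z>0$ is obtained by reversing the heuristic derivation of \eqref{eqn}: given $\xi$, form the Lamperti process via \eqref{LT}, apply It\=o's formula to $z\ee^{\xi_{\tau(t/z)}}$, and invoke the time change to exhibit it as a weak solution on $[0,T_0)$ (this is the announced Proposition \ref{prop:reversed}). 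I would extend past $T_0$ by concatenating at the trap, and handle $z=0$ by passing to a weak limit of solutions starting from $z_n\downarrow 0$; tightness in the Skorokhod topology follows from moment bounds built on the linear growth of the coefficients in $Z_{s-}$, and the limit satisfies the martingale problem for \eqref{eqn}.

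The main obstacle is pathwise uniqueness, for which I would follow the Dawson-Li treatment of the branching SDE \eqref{de}, suitably modified for the spectrally negative jump structure. Given two solutions $Z^1,Z^2$ on a common basis with shared $B,\cN,\mathcal M$, apply It\=o's formula to $\phi_n(Z^1_t-Z^2_t)$, where $\phi_n$ are the usual Yamada-Watanabe mollifiers of $|\cdot|$ with $|\phi_n'|\leq 1$ and $0\leq\phi_n''(x)\leq 2/(nx\log n)$ for $x>0$. The Brownian contribution $\tfrac{\sigma^2}{2}\phi_n''(Z^1-Z^2)(\sqrt{Z^1}-\sqrt{Z^2})^2$ is bounded by $\sigma^2/(n\log n)$ and vanishes after expectation. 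The crucial computation is the control of the compensated jump terms: for each fixed $u\leq 0$, the Taylor remainder of $\phi_n$ at $Z^1_{s-}-Z^2_{s-}$ evaluated at the jump difference $(\mathbf 1_{\{rZ^1_{s-}\leq 1\}}Z^1_{s-}-\mathbf 1_{\{rZ^2_{s-}\leq 1\}}Z^2_{s-})(\ee^u-1)$ integrates over $r$ to a quantity bounded by a constant times $|Z^1_{s-}-Z^2_{s-}|\,(1\wedge(\ee^u-1)^2)$, as one sees by splitting the $r$-axis at the thresholds $1/(Z^1\vee Z^2)$ and $1/(Z^1\wedge Z^2)$ and Taylor-expanding $\phi_n$. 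Integrating against $\Pi$ and against the killing rate $q$, taking expectation, and applying Gronwall's inequality give $\EE\phi_n(Z^1_t-Z^2_t)\to 0$, hence $Z^1\equiv Z^2$.

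Parts (b) and (c) would then follow by standard arguments. The Markov property for $\{Z^{(z)},z\geq 0\}$ is a consequence of strong existence and pathwise uniqueness applied to the time-shifted SDE. For self-similarity of index $1$, I would verify that $\tilde Z_t:=c^{-1}Z^{(cz)}_{ct}$ solves \eqref{eqn} starting from $c^{-1}z$ driven by the rescaled Brownian motion $\tilde B_t:=c^{-1/2}(B_{ct}-B_0)$ and by an appropriately rescaled Poisson random measure, with the indicators $\mathbf 1_{\{rZ_{s-}\leq 1\}}$ transforming consistently under $(s,r)\mapsto(s/c,r/c)$; uniqueness in law then yields \eqref{self_sim}. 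For (c), the absorbed process $(Z^{(z)})^\dag$ with $z>0$ solves the same SDE on $[0,T_0)$ and stays at $0$ afterwards, hence is a pssMp of index $1$; to identify its Lamperti transform I would apply It\=o's formula to $\log(Z^{(z)}_t/z)$ on $[0,T_0)$ and perform the inverse time change $s(t):=\inf\{s:\int_0^s(Z^{(z)}_u)^{-1}\dd u>t\}$, reading off the characteristics $(\gamma,\sigma^2,\Pi)$ and the killing rate $q$ directly from the coefficients of \eqref{eqn}. This is by design the reverse of the derivation in Section \ref{sec:jumpdif}.
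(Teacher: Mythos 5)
Your pathwise uniqueness argument is essentially the paper's (Proposition \ref{prop:uniqueness}): Yamada--Watanabe mollifiers, splitting the $r$-axis at $1/(Z^1\vee Z^2)$ and $1/(Z^1\wedge Z^2)$, and exploiting spectral negativity so that the Taylor expansion point of $\phi_n''$ stays bounded away from the region where $\phi_n''$ blows up. (Two small caveats: your claimed bound $C\,|Z^1-Z^2|\,(1\wedge(\ee^u-1)^2)$ for the $r$-integrated remainder is not what the computation yields uniformly in $n$ --- the correct uniform bounds are of the form $C(\ee^u-1)^2/(n\log n)$ for $-1\leq u\leq 0$ and $C\mathbf 1_{\{u<-1\}}$ for the large negative jumps, after which one passes to the limit by dominated convergence against the exact identity that the $r$-integral of the remainder of $|\cdot|$ vanishes for $u\leq 0$; no Gronwall step survives. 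Also the whole computation must be localized by the exit times $\tau_m$ of $[0,m)$, whose a.s.\ divergence itself requires an argument.)

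The genuine gap is in existence. You propose to obtain a weak solution from the Lamperti transform of $\xi$ and then ``extend past $T_0$ by concatenating at the trap.'' This fails precisely in the case the theorem is designed to cover: under the hypothesis $\log\EE(\ee^{\xi_1};\zeta>1)>0$ the process $\xi$ may still drift to $-\infty$ (or be killed), so $T_0<\infty$ a.s., and the path frozen at $0$ after $T_0$ does \emph{not} solve \eqref{eqn} for $t>T_0$, because the drift is a strictly positive \emph{constant} that does not vanish at the origin. The content of the theorem is exactly that the SDE solution crosses zero and yields the recurrent extension; your construction only produces the absorbed process $(Z^{(z)})^\dag$. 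For the same reason the weak limit of $Z^{(z_n)}$ as $z_n\downarrow 0$ would degenerate to the zero process in that case. The paper therefore cannot and does not use the Lamperti transform for existence; it builds the global solution by truncating the coefficients (L\'evy measure cut at $\eps$, jump rate cut at $1/\eps$, state cut at $m$), solving the resulting Lipschitz equation, interlacing the finitely many remaining jumps, passing to the limit $\eps\downarrow 0$ via Aldous tightness and a martingale-problem identification, and removing the $m$-truncation by the stopping times $\tau_m$. The Lamperti transform enters only afterwards (Proposition \ref{prop:reversed}) to identify the law of the \emph{absorbed} solution on $[0,T_0)$ and hence the triplet in part (c). Your treatment of (b) (rescaling the noises and invoking uniqueness) matches the paper's Proposition \ref{prop:self_sim_index_1}.
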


Recently, in Fu and Li \cite{FL}, Li and Mytnik \cite{LM} and Dawson and Li \cite{DL},
 the pathwise uniqueness problem for Poisson driven jump type SDEs has been studied.
Our SDE (\ref{eqn}) does not fit precisely into those frameworks because of missing monotonicity
 properties and the explosive jump rate at zero. However, we can adapt some of their
 arguments in our special case.
At the beginning of Section \ref{sec:proofs} we briefly explain why we assume that $\xi$ is spectrally negative and
 we explain why we conjecture that the minimal assumption $\EE(\ee^{\xi_1})<\infty$ should be enough for
 deriving the SDE (\ref{eqn}).

\subsection{Applications to the problems at zero}

	Before stating our contributions to the problems at zero let us discuss how to reduce pssMps of index
 $a>0$ to those of index $1$. For this sake we use the following fact which follows readily from the definition
 of Lamperti's transformation (see for instance Bertoin and Yor \cite[page 203]{BY3}):
	\begin{align}\label{b}\begin{split}
		&\quad Z^\dag\text{ is a pssMp of index } a \text{ with associated L\'evy process} \ \xi\\
		&\Longleftrightarrow \quad X^\dag:=(Z^\dag)^{1/a}\text{ is a pssMp of index }1
      \text{ with associated L\'evy process} \ \frac{1}{a}\xi.\end{split}
	\end{align}
Hence, in order to study recurrent self-similar extensions $(Z_t)_{t\geq 0}$ of $Z^\dag$ it suffices
 to find an extension $(X_t)_{t\geq 0}$ of $X^\dag$, since then
 $Z:=X^a$ is an extension of $Z^\dag$.
Since the Lamperti transformed L\'evy process for $X^\dag$ is explicitly given by $\frac{1}{a}\xi$,
 this can be done via Theorem \ref{thm:1} provided that the Lamperti transformed L\'evy process $\xi$
 is known for $Z^\dag$.
\smallskip

We can now state a new characterization of the unique recurrent self-similar extension of a pssMp
 that leaves zero continuously in the spirit of Fitzsimmons \cite{F} and Rivero \cite{R2}.
		
\begin{thm}\label{thm:3}
Suppose that $\xi$ is a spectrally negative L\'evy process {killed at rate $q\geq 0$} and the Cram\'er type condition (\ref{cramer}) holds. Further, let $(X_t)_{t\geq 0}$ be the unique strong solution of (\ref{eqn})  associated to $\frac{1}{a}\xi$ killed at rate $q$ with initial condition $z^{1/a}\geq 0$. If $Z^{(z)}:=X^a$, then
 \begin{itemize}
 	\item[(a)] $\{(Z^{(z)})^\dag,z\geq 0\}$ is a pssMp of self-similarity index $a$ with Lamperti transformed L\'evy process
               $\xi$ killed at rate $q$,
	\item[(b)] $\{Z^{(z)},z\geq 0\}$ is the unique recurrent {self-similar} extension of $\{(Z^{(z)})^\dag,z\geq 0\}$   that leaves zero continuously.
 \end{itemize}
 \end{thm}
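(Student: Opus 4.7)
The strategy is to reduce the index-$a$ problem to the index-$1$ case via the power transformation \eqref{b} and then invoke Theorem \ref{thm:1} applied to $\frac{1}{a}\xi$ killed at rate $q$. The first task is to check that the hypothesis $\log \EE(\ee^{(1/a)\xi_1}; \zeta > 1) > 0$ of Theorem \ref{thm:1} follows from the Cram\'er condition \eqref{cramer}. Setting $\psi(\theta) := \log \EE(\ee^{\theta \xi_1}; \zeta > 1)$, one has $\psi(0) = -q \leq 0$ and, by \eqref{cramer}, $\psi(\theta_0) = 0$ for some $\theta_0 \in (0, 1/a)$. Strict convexity of $\psi$ in the non-degenerate case then forces $\psi(1/a) > 0$, as needed.

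Theorem \ref{thm:1} now produces, for every $z \geq 0$, the pathwise unique non-negative strong solution $X = X^{(z^{1/a})}$ of \eqref{eqn} associated to $\frac{1}{a}\xi$, and identifies $X^\dag$ as a pssMp of index $1$ whose Lamperti transformed L\'evy process is $\frac{1}{a}\xi$ killed at rate $q$. Since the hitting times of zero for $X$ and $Z^{(z)} := X^a$ coincide, one has $(Z^{(z)})^\dag = (X^\dag)^a$, so the reverse implication in \eqref{b} identifies $(Z^{(z)})^\dag$ as a pssMp of index $a$ with Lamperti transformed L\'evy process $\xi$ killed at rate $q$. This proves part (a).

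For part (b), the uniqueness of a recurrent self-similar extension leaving zero continuously under \eqref{cramer} is the theorem of Fitzsimmons \cite{F} and Rivero \cite{R2}, so it only remains to verify that $\{Z^{(z)}, z \geq 0\}$ is such an extension. That this family is a non-negative self-similar Markov process of index $a$ follows from Theorem \ref{thm:1}(b) via the scaling identity $c^{-a} X^a_{ct} = (c^{-1}X_{ct})^a$, and the extension property is built in since part (a) identifies the absorbed process with $(Z^{(z)})^\dag$. The remaining content is therefore that $0$ is not a trap for $Z^{(z)}$ and that $Z^{(0)}$ leaves zero continuously, in the sense that $Z^{(0)}_s = 0$ for every strictly positive left endpoint $s$ of an excursion interval of $Z^{(0)}$ away from zero.

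Both of these properties are read off directly from the SDE \eqref{eqn} driving $X$. The constant drift $\log\EE(\ee^{(1/a)\xi_1}; \zeta > 1)$ is strictly positive, while the diffusion coefficient $\sigma \sqrt{X_s}$ vanishes at zero and every jump of $X$ is of the form $X_{s-}(\ee^u - 1)$ or $-X_{s-}$, so it vanishes whenever $X_{s-} = 0$. Consequently $X$ is instantly pushed off zero by the positive drift, so $0$ is not a trap. Moreover, at any excursion left endpoint $g$ one has $X_{g-} = 0$ by the c\`adl\`ag property together with the existence of a sequence $s_n \uparrow g$ with $X_{s_n} = 0$, and the preceding observation rules out any jump of $X$ at such $g$, yielding $X_g = X_{g-} = 0$; raising to the power $a$ transports both properties to $Z^{(z)}$. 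I expect the principal technical step to be the rigorous handling of this ``no jump'' argument at the random set of excursion endpoints, which should follow from the compensation formula applied to $\mathcal N$ and $\mathcal M$ on the predictable set $\{s : X_{s-} = 0\}$.
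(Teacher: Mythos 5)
Your proposal follows the paper's own proof very closely: positivity of the drift $\log\EE(\ee^{\xi_1/a};\zeta>1)$ from the Cram\'er condition via convexity (this is exactly \eqref{new1}), part (a) from Theorem \ref{thm:1} together with the equivalence \eqref{b}, uniqueness of the extension from Rivero/Fitzsimmons, and the ``leaves zero continuously'' property read off from the structure of the SDE \eqref{eqn} (strictly positive constant drift, diffusion and jump integrands vanishing at zero). So the route is the same, not a different one.

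The one place where you deviate is also the one place where your argument is not quite right: the uniform claim that at every strictly positive excursion left endpoint $g$ one has $X_{g-}=0$ because there is a sequence $s_n\uparrow g$ with $X_{s_n}=0$. This is true when $q=0$, since then zero is reached continuously. But when $q>0$ the process reaches zero by a killing jump, and the zero set can consist of isolated points; at such a left endpoint $g$ one has $X_{g-}>0$, and no sequence $s_n\uparrow g$ inside the zero set exists. The desired conclusion $X_g=0$ still holds there, but for a different reason: the killing jump lands \emph{exactly} at zero, and one must rule out that the $\mathcal N$-driven integral jumps at the same instant and displaces the process away from zero. The paper does this explicitly by observing that $\mathcal M$ and $\mathcal N$ are independent Poisson random measures and hence almost surely have no common atom in the time coordinate; your proposal omits this point (your appeal to the compensation formula on $\{s: X_{s-}=0\}$ does not reach it, since here $X_{g-}>0$). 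The fix is exactly the paper's case distinction: for $q=0$ your argument stands as written; for $q>0$ add the no-common-jump observation and note that $g$ then lies in the zero set itself, so $X_g=0$ directly.
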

Note that under the assumptions of Theorem \ref{thm:3}, the L\'evy process $\xi$ drifts to $-\infty$
 so that $\log \EE(\ee^{\frac{1}{a}\xi_1};{\zeta>1})>0$ by (\ref{new1}).
Hence, the SDE (\ref{eqn}) associated to $\frac{1}{a}\xi$ {killed at rate} $q$ has a strictly positive constant drift
 and the result is a consequence of {\eqref{b} and} Theorem \ref{thm:1}.
As usual in the study of pssMps, Theorem \ref{thm:3} is only applicable for pssMps for which
 $\xi$ is known.
For some examples of pssMps for which the {L\'evy triplet of the Lamperti transformed L\'evy process}
 $\xi$ can be calculated explicitly, we refer to Caballero and Chaumont \cite{CC2}.

 \smallskip

Finally, we utilize the fact that the SDE (\ref{eqn}) can be issued from zero once the constant drift is strictly positive.
This gives a characterization and convergence statement for pssMps started from zero via stochastic calculus.

\begin{thm}\label{thm:4}
Let $a>0$ and suppose that $\xi$ is a spectrally negative L\'evy process killed at rate $q\geq 0$ {and that} either
\begin{itemize}
	\item $\xi$ does not drift to $-\infty$,
	\item or $\xi$ drifts to $-\infty$ and Condition (\ref{cramer}) holds.
\end{itemize}
 If $\{\P_z^\dag,z\geq 0\}$  denotes the law of the {pssMp of self-similarity index $a$ with Lamperti transformed
 L\'evy process $\xi$ killed at rate $q$, then
 \[
   \text{w-}\lim_{z\downarrow 0}\P_z^\dag= \P_0 \quad \text{in the Skorokhod topology},
 \]
 where $\P_0$ is the law of $Z^{(0)}:=X^a$ and $X$ is the unique strong solution of
 the SDE (\ref{eqn}) associated to $\frac{1}{a}\xi$ killed at rate $q$ with initial condition $X_0=0$.}
\end{thm}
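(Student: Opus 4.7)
\emph{Reduction and setup.} My plan is to promote continuity of the SDE flow in the initial condition to the desired weak convergence of the pssMp laws, using a tightness-plus-identification argument for the SDE \eqref{eqn}. First, I would exploit \eqref{b}: setting $X:=Z^{1/a}$ turns the pssMp of index $a$ with Lamperti-transformed L\'evy process $\xi$ into a pssMp of index $1$ with transform $\frac{1}{a}\xi$. Since $\omega\mapsto\omega^a$ is continuous on $\D$ in Skorokhod's $J_1$-topology, the continuous mapping theorem reduces the problem to proving that the index-$1$ pssMp laws started at $x>0$ converge weakly to $\cL(X^{(0)})$ as $x\downarrow 0$, where $X^{(0)}$ is the unique strong solution of \eqref{eqn} associated to $\frac{1}{a}\xi$ killed at rate $q$ and starting at $X_0=0$. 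Convexity of the Laplace exponent $\psi$ of $\xi$, combined with \eqref{a}, \eqref{a_added} and \eqref{new1}, yields $\log\EE(\ee^{(1/a)\xi_1};\zeta>1)>0$ under either hypothesis of the theorem, so Theorem \ref{thm:1} provides a family of strong solutions $\{X^{(x)}\}_{x\geq 0}$ of \eqref{eqn} on a common stochastic basis.

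\emph{Tightness and identification.} Next, I would establish the weak convergence $X^{(x)}\to X^{(0)}$ as $x\downarrow 0$. Taking expectations in \eqref{eqn}, after checking (via Fubini and finiteness of $\int_\R(\ee^u-1)^2\,\Pi(\dd u)$, which is guaranteed by spectral negativity of $\xi$ and finiteness of its Laplace exponent on $[0,\infty)$) that the compensated Poisson and Brownian integrals are true martingales, gives $\EE[X_t^{(x)}]=x+(\log\EE(\ee^{(1/a)\xi_1};\zeta>1))\,t$ and, applying It\^o's formula to $f(x)=x^2$, a uniform second-moment bound on compact time intervals. Aldous's tightness criterion, whose stopping-time condition follows from these estimates together with a direct reading of \eqref{eqn}, then yields tightness of $\{X^{(x)}\}_{x\in[0,1]}$ in $\D$. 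For any subsequential weak limit $\bar X$ along $x_n\downarrow 0$, I would verify that $\bar X$ solves the martingale problem associated to \eqref{eqn} starting at $0$: for $f\in C_c^2(\R_+)$ each $X^{(x_n)}$ makes $f(X_t^{(x_n)})-f(x_n)-\int_0^t\mathcal{A}f(X_s^{(x_n)})\,\dd s$ a martingale, and passing to the weak limit is by now standard for jump SDEs. Pathwise uniqueness from Theorem \ref{thm:1} combined with the Yamada--Watanabe theorem forces $\bar X\distre X^{(0)}$, so the whole family converges: $X^{(x)}\to X^{(0)}$ weakly.

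\emph{Conclusion and main obstacle.} Finally, the continuous map $\omega\mapsto\omega^a$ upgrades this to $Z^{(z^{1/a})}\to Z^{(0)}$ weakly. In case (1), where the pssMp never hits $0$, $(Z^{(z^{1/a})})^\dag=Z^{(z^{1/a})}$ and $\P_z^\dag$ is precisely its law, which closes the argument. In cases (2) and (3), Theorem \ref{thm:3} identifies $Z^{(0)}=(X^{(0)})^a$ with the unique recurrent self-similar extension from $0$ that leaves zero continuously, and the convergence $\P_z^\dag\to\P_0$ then follows from the Fitzsimmons--Rivero excursion-theoretic description of that extension recalled before Theorem \ref{thm:3}. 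The hard part will be the tightness step: the diffusion coefficient $\sqrt{X}$ is non-Lipschitz and the jump rate $1/X_{s-}$ is explosive near $0$, so controlling the oscillations of $X^{(x)}$ uniformly in $x$ near the origin requires truncation and stopping-time estimates in the same spirit as the pathwise uniqueness argument underpinning Theorem \ref{thm:1}.
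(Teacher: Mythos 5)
Your strategy is sound and would work, but it takes a genuinely different route from the paper. The paper does not redo tightness and identification by hand: after the same reduction to index $1$ and the same observation that the drift $\log\EE(\ee^{\frac{1}{a}\xi_1};\zeta>1)$ is strictly positive, it invokes Theorem IX.4.8 of Jacod and Shiryaev, a packaged stability theorem for martingale problems under convergence of initial laws. The work then consists of writing down the semimartingale characteristics $(b^z,c^z,K^z)$ of the SDE \eqref{eqn}, checking that the modified second characteristic $\tilde c^z$ and the maps $y\mapsto\int g\,\dd K^z(y,\cdot)$ are continuous (including at $y=0$, where the $1/y$ jump rate is compensated by the $O(y^2(\ee^u-1)^2)$ integrand -- exactly the cancellation you would need for continuity of $\cA f$ in your identification step), verifying the uniform big-jump condition (IX.4.9) via $\int(\ee^u-1)^2\,\Pi^a(\dd u)<\infty$, and citing well-posedness of the limiting martingale problem from Theorem \ref{thm:1} and Proposition \ref{SM}. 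Your plan replaces this black box by explicit moment bounds, Aldous's criterion and a martingale-problem identification of subsequential limits; this is feasible because the predictable quadratic variations of the Poissonian integrals are linear in $X$ (since $\int_0^\infty\mathbf 1_{\{rx\leq 1\}}x^2\,\dd r=x$), so first and second moments close under a localization-plus-Gronwall argument uniformly in $x\in[0,1]$, and the estimates are essentially those of Lemma \ref{lem:tightness} with the truncation parameter replaced by the initial condition. What the paper's route buys is brevity and the avoidance of redoing tightness for the untruncated equation; what yours buys is self-containedness and no reliance on the Jacod--Shiryaev machinery.

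One soft spot: your final step in cases (2) and (3) appeals to the Fitzsimmons--Rivero description to pass from convergence of the SDE solutions $Z^{(z)}$ to convergence of the absorbed laws $\P_z^\dag$. As written this does not close the gap: the absorbed process $(Z^{(z)})^\dag$ is trapped at $0$ after $T_0$, and $T_0\to 0$ as $z\downarrow 0$ by self-similarity, so the literal limit of the absorbed laws is degenerate; the convergence your tightness argument (and the paper's proof) actually establishes is that of the laws of the unabsorbed solutions, i.e.\ of the recurrent extensions. The paper's proof is equally silent on this point, so the real mathematical content of your argument matches the paper's; just be aware that the identification of what is converging is the extension laws $\P_z$, not the absorbed laws, except in case (1) where the two coincide.
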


Theorems \ref{thm:3} and \ref{thm:4} were motivated by results on CSBPs with state-dependent immigration in Berestycki et al. \cite{BerDorMytZam}. In their context the pssMp was already defined by a jump type SDE and the extensions could be constructed more directly.

\begin{rem}
In the setting of Theorem \ref{thm:1} with $q=0$, applying It\={o}'s formula to the SDE (\ref{eqn}),
 one can check that the infinitesimal generator of the pssMp of index 1 associated to $\xi$
 takes the form
 \begin{align*}
 \mathcal A f(z)
   &=(\log \EE(\ee^{\xi_1}))f'(z) + \frac{\sigma^2}{2}zf''(z)
           +\frac{1}{z}\int_\R \big(f(z\ee^u)-f(z)-zf'(z)(\ee^u-1)\big)\Pi(\dd u)\\
   &=\left(\gamma + \frac{\sigma^2}{2}\right)f'(z) + \frac{\sigma^2}{2}zf''(z)
      +\frac{1}{z}\int_\R \big(f(z\ee^u)-f(z)
               - zf'(z)u\mathbf 1_{\{\vert u\vert\leq 1\}}\big)\Pi(\dd u)
 \end{align*}
 for $z>0$ and for appropriate smooth test-functions $f$.
The drift coefficient is $\gamma+\frac{\sigma^2}{2}$ (and not $\gamma$)
 which is a misprint in formula (6.5) of Lamperti \cite[Theorem 6.1]{L}.
\end{rem}

\section{Proofs}\label{sec:proofs}

From now on we will work on a stochastic basis $(\Omega,\cG,(\cG_t)_{t\geq 0},P)$ satisfying
 the usual conditions and we will denote the underlying probability measure
 and the expectation with respect to it by $P$ and $\E$, respectively, instead of $\P$ and $\EE$
 (in contrast to the introduction, but without confusion).
Further, to simplify the notations (in contrast to the formulation of Theorems \ref{thm:1}, \ref{thm:3} and \ref{thm:4})
 we will write $Z$ instead of $Z^{(z)}$, i.e. we omit denoting the initial value $z\geq 0$.
\smallskip

Before turning to the proofs we briefly discuss the difficulties of the SDE (\ref{eqn}).
The pathwise uniqueness proof is based on a Yamada-Watanabe type argument.
This argument has been reinvented for jump-type SDEs by Fu and Li \cite{FL} motivating
our approach as well even though the coefficients of the SDE (\ref{eqn}) do not satisfy the standard requirements.
To understand the additional difficulty of proving pathwise uniqueness for (\ref{eqn}) we stress two issues:\\}
 For the first, one has to consider carefully the large
  jumps, i.e. those jumps which come from
  an atom $(s,r,u)$ of $\mathcal N$ with $\vert u\vert$ larger than some fixed $M>0$.
In the setting of Fu and Li \cite{FL}, it was possible to truncate the jump measure at infinity.
Pathwise uniqueness for their jump type SDE with truncated intensity measure
 could be proved via second moment arguments and large jumps
 were added by interlacing since their occurrence does not accumulate.
Unfortunately, the large jumps accumulate for our SDE (\ref{eqn})
 when solutions approach zero.
Hence, the interlacement procedure cannot be fully used in our case so that estimates need to
 be carried out on the full L\'evy measure. \\
The second issue directly occurs in the estimates of the Yamada-Watanabe argument (or local time argument) for the pathwise uniqueness proof. The adaptation of the Yamada-Watanabe argument is relatively easy if for two solutions $X$ and $Y$
 \begin{align}\label{mono}
 	(X_{s-}-Y_{s-})\Delta (X_{s}-Y_{s})\geq 0,\quad s>0,
 \end{align}
 i.e. the larger solution has the larger jumps which equivalently says that the integrand of the Poissonian integral is non-decreasing. This monotonicity property is for instance verified for the CSBPs discussed in Section \ref{CSBPsection} and is a main assumption in Fu and Li \cite{FL} and Li and Mytnik \cite{LM}. Modifying some arguments, uniqueness can still be proved if the smaller solution does not exceed the larger solution by a jump, i.e. after a jump the difference $X-Y$ does not change sign. This is the key observation allowing to prove pathwise uniqueness for generalized Fleming-Viot processes in Dawson and Li \cite{DL}.  In fact, this is the precise origin of our hypothesis $\xi$ being spectrally negative. The monotonicity property (\ref{mono}) always fails due to the jump rate $1/Z_{s-}$ in Equation (\ref{eqn}) but a change of sign for the difference could occur only for jumps corresponding to $u>0$.
\begin{rem}\label{remark_cond}
	Apart from the proof of pathwise uniqueness given in Proposition \ref{prop:uniqueness}, all of our arguments hold equally if $\E(\ee^{2\xi_1})<\infty$ which is trivially fulfilled if $\xi$ is spectrally negative.
	To simplify a possible later extension, we kept the proofs in this larger generality,
 but behind always supposing that $\xi$ is spectrally negative.	
\end{rem}
\begin{rem}
	In the meantime the pathwise uniqueness has been generalized by Li and Pu \cite{LP} to
more general jump-type SDEs containing the SDE \eqref{eqn}.
\end{rem}
We now start with the proof of Theorem \ref{thm:1} for which we first prove the pathwise uniqueness statement and then construct a strong solution via approximation.
To prove pathwise uniqueness for the SDE (\ref{eqn}) the non-Lipschitz integrand $x\mapsto \sqrt{x}$ of
 the Brownian part forces us to use Yamada-Watanabe type arguments going back to \cite{YW1} and \cite{YW2}.\\
Let us start with some notations. Suppose $a_0=1$ and $0<...<a_k<a_{k-1}<...\leq a_0$, $k\in\N$, are such that
 $\int_{a_k}^{a_{k-1}}\frac{1}{z}\dd z=k$ for all $k\geq 1$.
For completeness, we note that $a_k=\ee^{-k}a_{k-1}$,  which yields that
	\begin{align*}
		a_k = \ee^{-\frac{k(k+1)}{2}},\qquad k\in\N,
	\end{align*}
 and hence $\lim_{k\to\infty} a_k = 0$. In what follows we will not use the explicit form of $(a_k)_{k\in\N}$.
For all $k\in\N$, let $\psi_k:\R\to\R_+$ be a non-negative continuous function with support in $(a_k,a_{k-1})$
 satisfying $\int_{a_k}^{a_{k-1}}\psi_k(x)\dd x=1$ and $\psi_k(x)\leq \frac{2}{kx}$, $x>0$.
Next, let us define
 \begin{align*}
		\phi_k(z):=\int_0^{|z|}\int_0^y \psi_k(x)\,\dd x\,\dd y,\quad z\in\R.
 \end{align*}
Then it is apparent that $\phi_k$ is even, twice continuously differentiable,
	\begin{align}\label{b0}
     		\lim_{k\to\infty}\phi_k(z) = |z|,\quad z\in\R,
	\end{align}
	and the sequence $(\phi_k)_{k\in\N}$ is non-decreasing. Furthermore, for fixed $k\in\N$ and $x,y\in\R_+$, we have
	\begin{align}\label{b1}
		\phi_k''(x-y)[\sqrt{x}-\sqrt{y}]^2=\psi_k(\vert x-y\vert)[\sqrt{x}-\sqrt{y}]^2\leq \frac{2[\sqrt{x}-\sqrt{y}]^2}{k|x-y|}\leq \frac{2}{k}.
	\end{align}
Supposing that there are two {non-negative} weak solutions $X$ and $Y$ of the SDE (\ref{eqn}) on the same stochastic basis
 $(\Omega,\cG,(\cG_t)_{t\geq 0},P)$
 with the same standard Wiener process and Poisson random measures {such that $X_0=Y_0$},
 our aim is to estimate $\E(\phi_k(X_t-Y_t))$ so that via Fatou's lemma and (\ref{b0}) an estimate for $\E(|X_t-Y_t|)$
 can be derived.
In the following we denote by
	\begin{align*}
		\Delta_h f(x):=f(x+h)-f(x)\quad\text{and}\quad D_hf(x):=\Delta_h f(x)-f'(x)h
	\end{align*}
	if the right-hand sides are meaningful and additionally abbreviate
	\begin{align*}
		{\bf g}(x,r,u)&=\mathbf 1_{\{rx\leq 1\}}x(\ee^u-1),\qquad x\geq 0,\;r\geq 0,\;u\in\R,\\
		{\bf h}(x,r)&=-\mathbf 1_{\{rx\leq 1\}}x,\qquad x\geq 0,\;r\geq 0.
	\end{align*}
	From now on we will also abbreviate $a\wedge b:=\min(a,b)$ and $a\vee b:=\max(a,b)$ for $a,b\in\R$. Let us start with a simple lemma {about} convergence to infinity of a suitable sequence of stopping times that are used to ensure that the appearing stopped local martingales are martingales.

\begin{lemma}\label{lll}
	Suppose that $Z$ is a non-negative weak solution of the SDE (\ref{eqn}) and let $\tau_m:=\inf\{t\geq 0: Z_t\geq m\}$ for $m\in \N$. Then the sequence $(\tau_m)_{m\in\N}$ is increasing and tends to infinity almost surely.
\end{lemma}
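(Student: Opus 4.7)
The monotonicity of $(\tau_m)_{m\in\N}$ is immediate from the definition: if $Z_t\ge m+1$ then certainly $Z_t\ge m$, so $\tau_{m+1}\ge \tau_m$. Hence the limit $\tau_\infty:=\lim_{m\to\infty}\tau_m$ exists almost surely, and the task is to show $P(\tau_\infty<\infty)=0$. The plan is to bound $E[Z_{t\wedge\tau_m}]$ uniformly in $m$ and then invoke Markov's inequality.

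Fix $t>0$. Stopping the SDE \eqref{eqn} at $t\wedge\tau_m$ produces the decomposition
\[
  Z_{t\wedge\tau_m}=z+c\,(t\wedge\tau_m)+M^{(1)}_t+M^{(2)}_t+M^{(3)}_t,
\]
where $c:=\log E(\ee^{\xi_1};\zeta>1)$ and $M^{(1)},M^{(2)},M^{(3)}$ are respectively the stopped Brownian, compensated killing-Poisson, and compensated jump-Poisson integrals. I claim that each $M^{(i)}$ is a true zero-mean martingale. The key observation is that on $\{s\le\tau_m\}$ one has $Z_{s-}\le m$, so after integrating out the $r$-variable (which produces a factor $1/Z_{s-}$ cancelling one power of $Z_{s-}$) the predictable quadratic variations satisfy
\[
  \langle M^{(1)}\rangle_{t\wedge\tau_m}\le \sigma^2 m t,\quad
  E\langle M^{(2)}\rangle_{t\wedge\tau_m}\le q m t,\quad
  E\langle M^{(3)}\rangle_{t\wedge\tau_m}\le m t\int_{\R}(\ee^u-1)^2\,\Pi(\dd u).
\]
The last integral is finite because spectral negativity confines $\Pi$ to $(-\infty,0]$, where $(\ee^u-1)^2\le u^2\wedge 1$, and this is integrable against $\Pi$ by the standard L\'evy measure condition.

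Taking expectations in the decomposition therefore gives
\[
  E[Z_{t\wedge\tau_m}]=z+c\,E[t\wedge\tau_m]\le z+c^+ t
\]
uniformly in $m\in\N$. Since $Z$ is c\`adl\`ag and $\tau_m$ is the first hitting time of the closed set $[m,\infty)$, we have $Z_{\tau_m}\ge m$ on $\{\tau_m<\infty\}$, whence
\[
  m\,P(\tau_m\le t)\le E\bigl[Z_{\tau_m}\mathbf{1}_{\{\tau_m\le t\}}\bigr]\le E[Z_{t\wedge\tau_m}]\le z+c^+ t.
\]
Letting $m\to\infty$ gives $P(\tau_\infty\le t)=\lim_m P(\tau_m\le t)=0$ (using that $\{\tau_m\le t\}$ is decreasing in $m$), and taking $t$ along the positive integers then yields $\tau_\infty=\infty$ almost surely.

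The only delicate point I anticipate is justifying that $M^{(3)}$ is genuinely a martingale rather than merely a local one; this is precisely where spectral negativity of $\xi$ is used (under the weaker hypothesis of Remark \ref{remark_cond}, $E(\ee^{2\xi_1})<\infty$ plays the same role), guaranteeing the finiteness of $\int_{\R}(\ee^u-1)^2\,\Pi(\dd u)$ and hence the integrability of the predictable quadratic variation up to $\tau_m$.
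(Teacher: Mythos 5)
Your proof is correct, but it takes a more self-contained route than the paper. The paper's proof verifies conditions (2.a) and (2.b) of Fu and Li \cite{FL} --- which amounts to exactly the linear-growth bound you compute, namely $\sigma(x)^2+\int[\lvert g_0\rvert\wedge g_0^2]\,\dd\mu_0\leq x\big(\sigma^2+\int_\R(\ee^u-1)^2\Pi(\dd u)\big)$ after integrating out the $r$-variable --- and then invokes their Proposition 2.3 as a black box for non-explosion; the case $q>0$ is handled afterwards by absorbing the killing measure $\mathcal M$ into $\mathcal N$ as an extra atom of $\Pi$. You instead carry out the underlying moment estimate by hand: stopping at $t\wedge\tau_m$, checking that the three stopped compensated integrals are genuine zero-mean martingales (the key points being the cancellation $\int_0^\infty\mathbf 1_{\{rZ_{s-}\leq 1\}}Z_{s-}^2\,\dd r=Z_{s-}\leq m$ and the finiteness of $\int_\R(\ee^u-1)^2\Pi(\dd u)$ under spectral negativity, both of which also appear in the paper's verification), concluding $E[Z_{t\wedge\tau_m}]\leq z+c^+t$ uniformly in $m$, and finishing with $Z_{\tau_m}\geq m$ on $\{\tau_m\leq t\}$ (valid by right-continuity) and Markov's inequality. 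Your route buys transparency and avoids the external citation; it works so cleanly here precisely because the drift of \eqref{eqn} is a constant, so no Gronwall/Lyapunov argument of the kind Fu and Li need for state-dependent drifts is required --- indeed the bound $(1+m)P(\tau_m<t)\leq(1+z)(E(\ee^{\xi_1}))^t$ that the paper extracts from Fu--Li's proof in Lemma \ref{lem:tau} is just the Lyapunov-function version of your estimate. You also treat the killing integral on the same footing as the jump integral, which is arguably tidier than the paper's re-coding of $\mathcal M$ into $\mathcal N$. One point worth stating explicitly if you polish this up: the martingale property of the stopped compensated Poisson integrals given finite expected quadratic variation is the criterion of Ikeda and Watanabe \cite[Chapter II, page 62]{IW}, which the paper uses elsewhere for the same purpose.
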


\begin{proof}
{First let} us assume that $q=0$.
Note that $\tau_m$ is increasing in $m$ so that $\lim_{m\to \infty}\tau_m$ exists almost surely.	
By Proposition 2.3 in Fu and Li \cite{FL}, we find that $\tau_m$ indeed converges to infinity almost surely as
 $m\to\infty$.
To check that we are allowed to use this proposition, their conditions (2.a) and (2.b) need to be verified.
In order to do so, let
 \begin{align*}
  & b(x):=\begin{cases}
           \log E(\ee^{\xi_1}) & \text{if $x>0$}\\
                             0 & \text{if $x\leq 0$}
          \end{cases} \quad\text{and}
  \quad \sigma(x):=\begin{cases}
                \sigma\sqrt{x} & \text{if $x>0$}\\
                             0 & \text{if $x\leq 0$}
                \end{cases}
  \end{align*}
 $\mu_1:= 0$, $g_1:= 0$, $U_0:=(0,\infty)\times \R$, $\mu_0(\dd r,\dd u) := \dd r\otimes \Pi(\dd u)$
 and
 $  g_0(x,r,u) := {\bf g}(x,r,u)$.
Then condition (2.a) in Fu and Li \cite{FL} is satisfied with $K:=\log E(\ee^{\xi_1})$, and, for all $x>0$,
 \begin{align*}
 &  \sigma(x)^2 + \int_0^\infty\int_\R \Big[\vert g_0(x,r,u)\vert\wedge g_0(x,r,u)^2\Big]
                    \,\mu_0(\dd r,\dd u) \\
 &\quad \leq \sigma^2x+ \int_0^\infty\int_\R \mathbf 1_{\{xr\leq 1\}}
                                        x^2(\ee^u-1)^2\,\dd r\,\Pi(\dd u) \\
 &\quad\leq \sigma^2 x+x\int_\R(\ee^u-1)^2\,\dd r\,\Pi(\dd u),
  \end{align*}
 which implies that condition (2b) in Fu and Li \cite{FL} is also satisfied.
 Since we have to suppose that $\xi$ is spectrally negative, the proof can be extended easily to $q>0$ if the Poisson random measure $\mathcal M$ is chosen to be $\mathcal M(\dd s,\dd r)=\mathcal N(\dd s,\dd r,\{1\})$, with an additional atom of weight $q$ for $\Pi$ at $\{1\}$, and $\bf g$ is extended as $\mathbf{g} (x,r,1)=-\mathbf 1_{\{rx\leq 1\}}x$ for $ x,r\geq 0$.
 \end{proof}

We can now proceed to prove the pathwise uniqueness. The proof is a combination of the Yamada-Watanabe argument with the
 classical {Tanaka's formula approach} for pathwise uniqueness. To avoid the local time argument we use a dominated convergence
 argument.

\begin{proposition}\label{prop:uniqueness}
Suppose that $\xi$ is as in Theorem \ref{thm:1}, then pathwise uniqueness for non-negative solutions holds
 for the SDE (\ref{eqn}) {for $t\geq 0$}.
\end{proposition}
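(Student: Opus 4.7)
The plan is to adapt the Yamada-Watanabe technique via the approximating functions $\phi_k$ introduced above. I would take two non-negative weak solutions $X$ and $Y$ on a common stochastic basis with the same $B$, $\mathcal N$, $\mathcal M$ and with $X_0=Y_0$, and define $\tau_m$ to be the first time either $X$ or $Y$ reaches level $m$; by Lemma~\ref{lll} applied to each solution, $\tau_m\to\infty$ almost surely. The first step is then to apply It\=o's formula to $\phi_k(X_t-Y_t)$, stop at $t\wedge\tau_m$ and take expectation. The constant drifts cancel since both processes share the drift $\log\E(\ee^{\xi_1};\zeta>1)$; the stochastic integrals against $B$, $\mathcal N-\mathcal N'$ and $\mathcal M-\mathcal M'$ are true martingales on $[0,\tau_m]$; and the Brownian second-order term is controlled by (\ref{b1}), contributing at most $\sigma^2 t/k$. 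The core task is to show that the remaining compensator contributions are also $O(1/k)$.

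Write $D_s:=X_{s-}-Y_{s-}$ and assume $D_s\geq 0$ by symmetry ($\phi_k$ is even). Spectral negativity --- restricting $\Pi$ to $(-\infty,0]$ --- has two key consequences: the sign of $D_s$ is preserved across jumps (for $r\leq 1/X_{s-}$ both processes jump and $D_s$ is multiplied by $\ee^u\in[0,1]$; for $1/X_{s-}<r\leq 1/Y_{s-}$ only $Y$ jumps downward, so $D_s$ increases but stays positive), and $\int_{-\infty}^0(1-\ee^u)^2\,\Pi(du)<\infty$. Splitting the $r$-integration into these two regions reduces the $\mathcal N$-compensator to two Taylor remainders $D_h\phi_k(D_s)$ with $h=D_s(\ee^u-1)$ and $h=Y_{s-}(1-\ee^u)$ respectively. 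The integral formula $D_h\phi_k(x)=h^2\int_0^1(1-\lambda)\psi_k(x+\lambda h)\,d\lambda$ combined with $\psi_k(z)\leq 2/(kz)$ yields $D_h\phi_k(D_s)\leq CD_s(1-\ee^u)^2/k$ in the first region (the remaining $\lambda$-integral is bounded uniformly in $u\leq 0$) and $D_h\phi_k(D_s)\leq Y_{s-}^2(1-\ee^u)^2/(kD_s)$ in the second (using $x+\lambda h\geq D_s$).

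The crucial cancellation appears on multiplying by the length of each $r$-region: the ``both jump'' region of length $1/X_{s-}$ gives $CD_s(1-\ee^u)^2/(kX_{s-})\leq C(1-\ee^u)^2/k$ after $D_s\leq X_{s-}$, while the ``only $Y$ jumps'' region of length $D_s/(X_{s-}Y_{s-})$ gives $Y_{s-}(1-\ee^u)^2/(kX_{s-})\leq (1-\ee^u)^2/k$ after $Y_{s-}\leq X_{s-}$ --- here the factor $D_s$ in the length absorbs the $1/D_s$ singularity from the Taylor estimate. Integration against $\Pi(du)$ produces a finite constant. The killing compensator driven by $\mathcal M$ is handled identically: in the ``both killed'' region the integrand is $\phi_k'(D_s)D_s-\phi_k(D_s)=\int_0^{D_s}w\psi_k(w)\,dw\leq 2D_s/k$, and in the ``only $Y$ killed'' region the Taylor bound $D_{Y_{s-}}\phi_k(D_s)\leq Y_{s-}^2/(kD_s)$ applies, both producing contributions of order $q/k$.

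Summing all terms yields $\E\phi_k(X_{t\wedge\tau_m}-Y_{t\wedge\tau_m})\leq Ct/k$ with a constant $C$ depending only on $\sigma$, $\Pi$ and $q$; Fatou's lemma together with (\ref{b0}) then gives $\E|X_{t\wedge\tau_m}-Y_{t\wedge\tau_m}|=0$, and letting $m\to\infty$ yields indistinguishability of $X$ and $Y$. The main obstacle, as flagged in the preceding discussion, is that the jump rate $1/Z_{s-}$ blows up near zero so that interlacement of large jumps cannot be invoked; the estimate succeeds only because the factor $D_s$ in the length of the asymmetric $r$-region cancels the $1/D_s$ singularity of the Yamada-Watanabe Taylor bound, and because spectral negativity prevents $X-Y$ from changing sign through a jump.
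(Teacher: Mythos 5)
Your argument is correct and follows the same Yamada--Watanabe skeleton as the paper's proof: the same mollifiers $\phi_k$, the same stopping times $\tau_m$ justified by Lemma \ref{lll}, the same splitting of the $r$-integration into the region where both solutions jump and the region where only the smaller one does, and the same use of spectral negativity to prevent $X-Y$ from changing sign at a jump. The one genuine difference lies in how the compensator terms are controlled. The paper first establishes the exact identity \eqref{bb} (and its analogue for ${\bf h}$), showing that the corresponding expressions with $\phi_k$ replaced by $\phi(z)=|z|$ vanish identically, and then passes to the limit $k\to\infty$ by dominated convergence with uniform-in-$k$ integrable majorants; note that for the large negative jumps $u<-1$ the paper's majorant is merely integrable, not $O(1/k)$, so the exact cancellation in the limit is genuinely needed there. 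You instead bound each compensator term directly by $C/k$ times a $\Pi$-integrable function, using the integral form of the Taylor remainder together with the uniform bound $\int_0^1\frac{1-\lambda}{1+\lambda(\ee^u-1)}\,\dd\lambda\le 1$ for all $u\le 0$; this treats small and large negative jumps in one stroke and renders both the identity \eqref{bb} and the dominated-convergence step unnecessary, yielding the cleaner quantitative estimate $\E\,\phi_k(X_{t\wedge\tau_m}-Y_{t\wedge\tau_m})\le Ct/k$. Both arguments ultimately rest on the same cancellation --- the length $(X_{s-}-Y_{s-})/(X_{s-}Y_{s-})$ of the asymmetric $r$-region absorbing the $1/(X_{s-}-Y_{s-})$ singularity of $\phi_k''$ --- so your route is a quantitative streamlining of one step rather than a new method. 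The items you assert without detail (the martingale property of the stopped stochastic integrals, i.e. the analogues of \eqref{help8}--\eqref{help9b}, and the symmetric case $X_s\le Y_s$) are routine and are handled in exactly the same way in the paper.
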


\begin{proof}
Suppose that ${(X_t)_{t\geq 0}}$ and ${(Y_t)_{t\geq 0}}$ are two non-negative weak solutions
 of the SDE (\ref{eqn}) on the same stochastic basis with the same standard Wiener process and Poisson random measures
 such that $X_0 = Y_0=z\geq 0$.
In the sequel we are going to show that $X$ and $Y$ are indistinguishable.
First, by It\=o's formula (see, e.g. {Di Nunno et al. \cite[Theorem 9.5]{NOP}} or Ikeda and Watanabe \cite[Chapter II, Theorem 5.1]{IW}) taking
 the difference of these solutions, the drifts cancel each other and it remains
\begin{align*}
  X_t-Y_t
     &=\sigma\int_0^t\Big[\sqrt{ X_{s}}-\sqrt{Y_{s}}\Big]\dd B_s
	                       \\
	                       &\quad+ \int_0^t\int_0^\infty\int_{\R} \big[{\bf g}(X_{s-},r,u)
                           - {\bf g}(Y_{s-},r,u)\big]
                             \mathcal{(N-N')}(\dd s,\dd r,\dd u)\\
                                                &\quad+ \int_0^t\int_0^\infty \big[{\bf h}(X_{s-},r)
                           - {\bf h}(Y_{s-},r)\big]
                             \mathcal{(M-M')}(\dd s,\dd r),\quad t\geq 0,
 \end{align*}
 since the noises are independent.
Next, for all $m\in\N$, let $\tau_m:=\inf\big\{t\geq 0\,:\, X_t \geq m\text{ or }  Y_t\geq m\big\}$ which  tends to infinity as $m\to\infty$ almost surely due to Lemma \ref{lll}.
Again by It\=o's formula, now applied to the semi-martingale $Z_t:= X_t- Y_t$, $t\geq 0$, we obtain
 \begin{align}\label{help_path_uni}
      \begin{split}
		\phi_k(Z_{t\wedge \tau_m})
	   &=\frac{\sigma^2}{2}\int_0^{t\wedge \tau_m}
	     \phi_k''(Z_{s})\Big[\sqrt{X_{s}}-\sqrt{ Y_{s}}\Big]^2 \,\dd s\\
	   &\quad+\int_0^{t\wedge \tau_m}\int_0^\infty\int_{\R}
	      D_{{\bf g}(X_{s},r,u)-{\bf g}( Y_{s},r,u)}
	    \phi_k(Z_{s})\,\dd s\,\dd r\,\Pi(\dd u)\\
	    	   &\quad+q\int_0^{t\wedge \tau_m}\int_0^\infty	      D_{{\bf h}(X_{s},r)-{\bf h}( Y_{s},r)}
	    \phi_k(Z_{s})\,\dd s\,\dd r\\
	   &\quad + \sigma \int_0^{t\wedge \tau_m} \phi_k'(Z_{s})
	     \Big[\sqrt{ X_{s}}-\sqrt{ Y_{s}}\Big] \,\dd B_s\\
	   &\quad + \int_0^{t\wedge \tau_m}\int_0^\infty \int_{\R}
             \Delta_{ {\bf g}(X_{s-},r,u) - {\bf g}( Y_{s-},r,u) }
                     \phi_k(Z_{s-})\mathcal{(N-N')}(\dd s,\dd r,\dd u)\\
                     	   &\quad + \int_0^{t\wedge \tau_m}\int_0^\infty
             \Delta_{ {\bf h}(X_{s-},r) - {\bf h}( Y_{s-},r) }
                     \phi_k(Z_{s-})\mathcal{(M-M')}(\dd s,\dd r)
      \end{split}
  \end{align}
 for $k, m\in\N$ and $t\geq 0$.
Let us first show that the last three integrals are proper martingales in $t$ with respect to the filtration $(\cG_t)$.
By Ikeda and Watanabe \cite[Chapter II, Proposition 2.4 and page 62]{IW}, for this it is enough to check that
 \begin{align}
	E\left( \int_0^{t\wedge \tau_m} (\phi_k'(Z_{s}))^2
    \Big(\sqrt{X_{s}}-\sqrt{Y_{s}}\Big)^2 \,\dd s\right)<\infty,\label{help8}\\
   E\left( \int_0^{t\wedge \tau_m}\!\!\int_0^\infty\!\! \int_{\R}
    \Big(\Delta_{ {\bf g}(X_{s},r,u) - {\bf g}( Y_{s},r,u) }
       \phi_k(Z_{s})\Big)^2\dd s\,\dd r\,\Pi(\dd u)\right)
      		<\infty,\label{help9}\\
		   E\left( q\int_0^{t\wedge \tau_m}\!\!\int_0^\infty\!\!
    \Big(\Delta_{ {\bf h}(X_{s},r) - {\bf h}( Y_{s},r) }
       \phi_k(Z_{s})\Big)^2\dd s\,\dd r\right)
      		<\infty,\label{help9b}
 \end{align}
 for $k,m\in\N$ and $t\geq 0$.
Using that $ X_t<m$ and $ Y_t<m$ for $0\leq t<\tau_m$, and that {$(\phi_k'(z))^2\leq 1$, $z\in\R$,}
(\ref{help8}) follows.
For \eqref{help9}, using the estimate $(\Delta_h\phi_k(x))^2\leq h^2$, $h\in\R$, $x\in\R$, we get the upper bound
	 \begin{align*}
	 	& E\Bigg( \int_0^{t\wedge \tau_m}\int_0^\infty \int_{\R}
	         ( {\bf g}\big( X_{s},r,u) - {\bf g}( Y_{s},r,u)\big)^2 \dd s\,\dd r\,\Pi(\dd u)\Bigg)\\
  		&\quad \leq E \left(  \int_0^{t\wedge \tau_m} \vert Z_{s}\vert \,\dd s \right)
                          \int_{\R} (\ee^u -1)^2\,\Pi(\dd u)<\infty,
	\end{align*}
since for all $x,y\in\R_+$,
		\begin{align*}
				&\quad \int_0^\infty\int_{\R} (\mathbf 1_{\{rx\leq 1\}}x
                         			    -\mathbf 1_{\{ry\leq 1\}}y)^2(\ee^u-1)^2\dd r\,\Pi(\dd u)\\
			&=
         		 \left(x+y-2\int_0^\infty \mathbf 1_{\{ry\leq 1\}}
                      \mathbf 1_{\{rx\leq 1\}}xy\,\dd r\right)\int_{\R} (\ee^u-1)^2\,\Pi(\dd u)\\
		    & =\left(x+y-2\min(x,y)\right)\int_{\R} (\ee^u-1)^2\,\Pi(\dd u)\\
			& =|x-y|\int_{\R} (\ee^u-1)^2\,\Pi(\dd u).
	\end{align*}
This shows  (\ref{help9})  since $\vert Z_s\vert\leq 2m$ for $0\leq s<\tau_m$.
Almost the identical estimate shows (\ref{help9b}).
Note also that the above calculations show that we had the right to use It\={o}'s formula for deriving \eqref{help_path_uni}.
Now we are in a position to carry out the Yamada-Watanabe argument for (\ref{help_path_uni}). The martingales vanish if we take expectations and the first integral can be estimated via (\ref{b1}) {having limit $0$ as $k\to\infty$
 for every fixed $t\geq 0$ and $m\in\N$.}
If we suppose additionally that, for any $t\geq0$ and $m\in\N$,
	 	\begin{align}
		\lim_{k\to \infty} \E\left( \int_0^{t\wedge \tau_m}\int_0^\infty\int_{\R} D_{{\bf g}(X_{s},r,u)-{\bf g}( Y_{s},r,u)}	    \phi_k\big(X_{s}-Y_s\big)\,\dd s\,\dd r\,\Pi(\dd u)	 \right)&=0,\label{l3}\\
				\lim_{k\to \infty} \E\left( q\int_0^{t\wedge \tau_m}\int_0^\infty D_{{\bf h}(X_{s},r)-{\bf h}( Y_{s},r)}	    \phi_k\big(X_{s}-Y_s\big)\,\dd s\,\dd r	 \right)&=0\label{l3b}
	\end{align}
	then  {\eqref{b0}}, \eqref{help_path_uni} and Fatou's lemma lead us to
	\begin{align*}
		0\leq E\big(|Z_{t\wedge \tau_m}|\big)&\leq\lim_{k\to\infty} E\big(\phi_k(Z_{t\wedge \tau_m})\big)=0
	\end{align*}
 for every $t\geq 0$ and $m\in \N$.
Using that $\tau_m$ tends to $\infty$ almost surely as $m\to\infty$, again by Fatou's lemma, we have
	\begin{align*}
		0\leq E\big(|Z_t|\big)\leq \lim_{m\to\infty} E\big(|Z_{t\wedge \tau_m}|\big)=0, \quad  t\geq 0.
	\end{align*}
Hence, the processes $\big\{ X_t, t\geq 0\big\}$ and
 $\big\{ Y_t, t\geq 0\big\}$ are modifications of one another.
Since both processes have right continuous sample paths they are also indistinguishable
 (see, e.g., Karatzas and Shreve \cite[Problem 1.1.5]{KS}).

\medskip

To finish the proof we still have to verify  (\ref{l3}) and (\ref{l3b}). Let us first define $\phi(z):=|z|$, $z\in\R$, and note that, with $\phi'(z)=\mathrm{sign}(z)$ for $ z\neq 0$,
	\begin{align}\label{bb}\begin{split}
	&\quad \int_0^\infty      D_{{\bf g}(x,r,u)-{\bf g}( y,r,u)}	    \phi\big(x-y\big)\,\dd r\\
	&=\mathbf 1_{\{x>y\}}\int_0^{1/x} \left(\big|(x-y)+(x-y)(\ee^u-1) \big|-\big|x-y\big|-\big(x-y\big)\big(\ee^u-1\big)\right)\dd r\\		
	&\quad+ \mathbf 1_{\{x>y\}}\int_{1/x}^{1/y} \left(\big|(x-y)-y(\ee^u-1) \big|-\big|x-y\big|{+}y\big(\ee^u-1\big)\right)\dd r\\
	&\quad+\mathbf 1_{\{x{\leq}y\}}\int_0^{1/y} \left(\big|(x-y)+(x-y)(\ee^u-1) \big|-\big|x-y\big|
           +\big(x-y\big)\big(\ee^u-1\big)\right)\dd r\\	
    &\quad+ \mathbf 1_{\{x{\leq}y\}}\int_{1/y}^{1/x} \left(\big|(x-y)+x(\ee^u-1) \big|-\big|x-y\big|+x\big(\ee^u-1\big)\right)\dd r\\
	&=0\end{split}
	\end{align}	
 for any $x,y\geq 0$ and any $u\leq 0$.
In fact, each of the four integrands is constant zero.
By Fubini's theorem (justified by the trivial fact that $|0|=0$) this shows that	
	\begin{align*}
		\E\left( \int_0^{t\wedge \tau_m}\int_0^\infty\int_{\R}      D_{{\bf g}(X_{s},r,u)-{\bf g}( Y_{s},r,u)}	    \phi\big(X_{s}-Y_s\big)\,\dd s\,\dd r\,\Pi(\dd u)	 \right)=0
	\end{align*}
	so that \eqref{l3} follows if we can show that
	\begin{align*}
		&\quad \lim_{k\to \infty} \E\left( \int_0^{t\wedge \tau_m}\int_0^\infty\int_{\R}      D_{{\bf g}(X_{s},r,u)-{\bf g}( Y_{s},r,u)}	    \phi_k\big(X_{s}-Y_s\big)\,\dd s\,\dd r\,\Pi(\dd u)	 \right)\\
		&= \E\left( \int_0^{t\wedge \tau_m}\int_0^\infty\int_{\R}      D_{{\bf g}(X_{s},r,u)-{\bf g}( Y_{s},r,u)}	    \phi\big(X_{s}-Y_s\big)\,\dd s\,\dd r\,\Pi(\dd u)	 \right).
	\end{align*}
By Fubini's theorem, separating the four cases as before {in \eqref{bb}}
 and integrating out $r$ we see that we can equally show that for any $t\geq 0$
 (the corresponding two cases for $X_s\leq Y_s$ are similar)
	\begin{align}\label{l1}\begin{split}
		&\lim_{k\to\infty} \E\Big(\int_0^t\int_{\R}\mathbf 1_{\{X_s>Y_s\}}\frac{1}{X_s}\Big(\phi_k(X_s-Y_s+(X_s-Y_s)(\ee^u-1))\\
		&\qquad-\phi_k(X_s-Y_s)-\phi_k'(X_s-Y_s)(X_s-Y_s)(\ee^u-1)\Big){\dd s\,\Pi(\dd u)}\Big)\\
		&= \E\Big(\int_0^t\int_{\R}\mathbf 1_{\{X_s>Y_s\}}\frac{1}{X_s}\Big(\big|X_s-Y_s+(X_s-Y_s)(\ee^u-1)\big|\\
		&\qquad-\big|X_s-Y_s\big|-(X_s-Y_s)(\ee^u-1)\Big){\dd s\,\Pi(\dd u)}\Big)\end{split}
	\end{align}
	and	
	\begin{align}\label{l2}\begin{split}
		&\lim_{k\to\infty} \E\Big(\int_0^t\int_{\R}
            \mathbf 1_{\{X_s>Y_s\}}\Big(\frac{1}{Y_s}-\frac{1}{X_s}\Big)\Big(\phi_k(X_s-Y_s-Y_s(\ee^u-1))\\
		&\qquad-\phi_k(X_s-Y_s)+\phi_k'(X_s-Y_s)Y_s(\ee^u-1)\Big){\dd s\,\Pi(\dd u)}\Big)\\
		&= \E\Big(\int_0^t\int_{\R}\mathbf 1_{\{X_s>Y_s\}}\left(\frac{1}{Y_s}-\frac{1}{X_s}\right)
          \Big(\big|X_s-Y_s-Y_s(\ee^u-1)\big|\\
  &\quad\quad         -\big|X_s-Y_s\big|+Y_s(\ee^u-1)\Big)           {\dd s\,\Pi(\dd u)}\Big).\end{split}
	\end{align}
Indeed, one can apply Fubini's theorem for \eqref{l1} and \eqref{l2}, since in all cases
 the integrands are non-negative which follows by the mean-value theorem representation for the remainder
 of Taylor's theorem and  $\phi_k''(x) = \psi_k(\vert x\vert)\geq 0$, $x\in\R$.
Due to the pointwise convergences $\lim_{k\to\infty}\phi_k(x)=|x|$, $x\in\R$, and $\lim_{k\to\infty}\phi_k'(x)=\textrm{sign}(x)$ for $x\neq 0$ it suffices to verify dominated convergence for both integrals. To prove (\ref{l1}), we split the large and small negative jumps to obtain
	\begin{align*}
		&\quad\mathbf 1_{\{X_s>Y_s\}}\frac{1}{X_s}\Big(\phi_k(X_s-Y_s+(X_s-Y_s)(\ee^u-1))\\
		&\phantom{\quad\mathbf 1_{\{X_s>Y_s\}}\frac{1}{X_s}\Big(}
              -\phi_k(X_s-Y_s)-\phi_k'(X_s-Y_s)(X_s-Y_s)(\ee^u-1)\Big)\\
		&\leq \mathbf 1_{\{X_s>Y_s\}}\mathbf 1_{\{u<-1\}}\frac{1}{X_s}\phi_k'(X_s-Y_s)(X_s-Y_s)(1-\ee^u)\\
		&\quad\quad +\mathbf 1_{\{X_s>Y_s\}}\mathbf 1_{\{-1\leq u\leq 0\}}\frac{1}{X_s}\frac{1}{2}\phi_k''(\varrho_s) (X_s-Y_s)^2 (\ee^u-1)^2
	\end{align*}
  for some $\varrho_s\in((X_s-Y_s)\ee^u,X_s-Y_s)$, where we used again the mean-value theorem representation
 for the remainder of Taylor's theorem and that $\phi_k(x)$ is increasing in $x\geq 0$ and $\phi_k'(x)\geq 0$, $x\in\R$.
 Using that $\phi_k'(x)\leq 1$ and $\phi_k''(x)\leq \frac{2}{kx}$ for $x>0$ this gives the uniform in $k$ upper bound
	\begin{align*}
		\mathbf 1_{\{u<-1\}}+\mathbf 1_{\{-1\leq u\leq 0\}}\ee^{-u} (\ee^u-1)^2
	\end{align*}
	which is integrable with respect to $P\otimes \mathbf 1_{\{s\leq t\}}\dd s\otimes \Pi(\dd u)$. Hence, (\ref{l1}) follows from the dominated convergence theorem.

Now we turn to prove \eqref{l2} which is more delicate as \eqref{l2} corresponds to the jumps possibly contradicting \eqref{mono} if $u$ was positive, i.e. if the L\'evy process $\xi$ was not spectrally negative. We only use Taylor's theorem and the mean-value theorem to estimate 	
	\begin{align*}
		&\quad \mathbf 1_{\{X_s>Y_s\}}\left(\frac{1}{Y_s}-\frac{1}{X_s}\right)\left(\phi_k(X_s-Y_s-Y_s(\ee^u-1))-\phi_k(X_s-Y_s)+\phi_k'(X_s-Y_s)Y_s(\ee^u-1)\right)\\
  	           &\leq \mathbf 1_{\{X_s>Y_s\}}\frac{1}{2}\phi_k''(\aleph_s)\frac{X_s-Y_s}{X_sY_s}Y_s^2(\ee^u-1)^2,
	\end{align*}
  	where  $\aleph_s\in (X_s-Y_s,X_s-Y_s+Y_s(1-\ee^u))$. The right-end of the interval is larger than the left-end due to the assumption $u\leq 0$. If $u$ was positive, then $X_s-Y_s+Y_s(1-\ee^u)$ might even become negative. In this case we should estimate $\phi_k''$ uniformly in $k$ in an interval containing zero but, by definition, $\phi_k''$ explodes as $k$ tends to infinity around zero.	
Using again that $\phi''_k(x)\leq \frac{2}{kx}$ for $x>0$,
 we obtain the $P\otimes \mathbf 1_{\{s\leq t\}}\dd s\otimes \Pi(\dd u)$-integrable upper bound
 $(\ee^u-1)^2$ so that (\ref{l2}) and thus (\ref{l3}) is verified.

To verify (\ref{l3b}), we proceed similarly. First note that for $x,y\geq 0$
 \begin{align*}
	&\quad \int_0^\infty  D_{{\bf h}(x,r)-{\bf h}( y,r)}	    \phi\big(x-y\big)\,\dd r\\
	&=\mathbf 1_{\{x>y\}}\int_0^{1/x}\big(|x-y+(-x+y)|-|x-y|-(-x+y)\big)\,\dd r\\
	&\quad+\mathbf 1_{\{x>y\}}\int_{1/x}^{1/y}\big(|x-y+y|-|x-y|-y\big)\,\dd r\\
	&\quad+\mathbf 1_{\{x\leq y\}}\int_0^{1/y}\big(|x-y+(-x+y)|-|x-y|+(-x+y)\big)\,\dd r\\
	&\quad+\mathbf 1_{\{x\leq y\}}\int_{1/y}^{1/x}\big(|x-y+(-x)|-|x-y|+(-x)\big)\,\dd r\\
	&=0
 \end{align*}
 since all integrands are constant zero. In particular, this shows that	
	\begin{align*}
		\E\left(q \int_0^{t\wedge \tau_m}\int_0^\infty
               D_{{\bf h}(X_{s},r)-{\bf h}( Y_{s},r)}	    \phi\big(X_{s}-Y_s\big)\,\dd s\,\dd r\right)=0
	\end{align*}
	so that (\ref{l3b}) follows if we can show that
	\begin{align*}
		&\quad \lim_{k\to \infty} \E\left( q\int_0^{t\wedge \tau_m}\int_0^\infty    D_{{\bf h}(X_{s},r)-{\bf h}( Y_{s},r)}	    \phi_k\big(X_{s}-Y_s\big)\,\dd s\,\dd r	 \right)\\
		&= \E\left( q\int_0^{t\wedge \tau_m}\int_0^\infty      D_{{\bf h}(X_{s},r)-{\bf h}( Y_{s},r)}	    \phi\big(X_{s}-Y_s\big)\,\dd s\,\dd r \right).
	\end{align*}

As before, separating the four cases and integrating out $r$ we see that we can equally show that
 for any $t\geq 0$ (the corresponding two cases for  $X_s\leq Y_s$ are similar)
	\begin{align}\label{l1b}\begin{split}
		&\lim_{k\to\infty} \E\left(q\int_0^t\mathbf 1_{\{X_s>Y_s\}}\frac{1}{X_s}\Big(
		-\phi_k(X_s-Y_s)+\phi_k'(X_s-Y_s)(X_s-Y_s)\Big){\dd s}\right)\\
		&= \E\left(q\int_0^t\mathbf 1_{\{X_s>Y_s\}}\frac{1}{X_s}\Big(
		-\big|X_s-Y_s\big|+X_s-Y_s\Big){\dd s}\right)\end{split}
	\end{align}
	and	
	\begin{align}\label{l2b}\begin{split}
		&\lim_{k\to\infty} \E\left(q\int_0^t\mathbf 1_{\{X_s>Y_s\}}\Big(\frac{1}{Y_s}-\frac{1}{X_s}\Big)\Big(\phi_k(X_s)-\phi_k(X_s-Y_s)-\phi_k'(X_s-Y_s)Y_s\Big){\dd s}\right)\\
		&= \E\left(q\int_0^t\mathbf 1_{\{X_s>Y_s\}}\left(\frac{1}{Y_s}-\frac{1}{X_s}\right)\Big(\big|X_s\big|-\big|X_s-Y_s\big|-Y_s\right)
           {\dd s}\Big).\end{split}
	\end{align}
Here we note that, as earlier, all the integrands are non-negative.
The convergence of (\ref{l1b}) follows by dominated convergence, since, using that $\phi_k(x)\geq 0$, $x\in\R$,
 and $\phi_k'(x)\leq 1$, $x\geq 0$, we have the $P\otimes \mathbf 1_{\{s\leq t\}}\dd s$-integrable
 upper bound
 \begin{align*}
   \mathbf 1_{\{X_s>Y_s\}}\frac{1}{X_s}\Big(-\phi_k(X_s-Y_s)+\phi_k'(X_s-Y_s)(X_s-Y_s)\Big)
      \leq \mathbf 1_{\{X_s>Y_s\}} \frac{X_s-Y_s}{X_s}\leq 1
 \end{align*}
 for the integrand.
For (\ref{l2b}) we use dominated convergence based on
	\begin{align*}
		 {\mathbf 1_{\{X_s>Y_s\}}}&\Big(\frac{1}{Y_s}-\frac{1}{X_s}\Big)
             \big(\phi_k((X_s-Y_s)+Y_s)-\phi_k(X_s-Y_s) -\phi_k'(X_s-Y_s)Y_s\Big) \\
            & = { \mathbf 1_{\{X_s>Y_s\}}} \Big(\frac{X_s-Y_s}{X_sY_s}\Big){\frac{1}{2}}\phi_k''(\aleph_s) Y_s^2
	\end{align*}
	for some $\aleph_s\in (X_s-Y_s,X_s-Y_s+Y_s)$, so that $\phi_k''(x)\leq \frac{2}{kx}$, $x>0$, gives the
 $P\otimes \mathbf 1_{\{s\leq t\}}\dd s$-integrable upper bound ${\frac{1}{k X_sY_s} Y_s^2\leq 1}$, $k\in\N$.
This completes the proof.
\end{proof}

We now turn our attention to the existence of solutions;
 the proof is given via a sequence of lemmas utilizing ideas of Fu and Li \cite{FL}.
The strategy is to construct solutions by first considering the compensated equation suppressing all jumps,
 then adding jumps carefully (with truncations that allow to do this) and finish with a weak convergence
 argument.

We start with proving existence of a unique strong solution for the {(ordinary) SDE obtained from \eqref{eqn}
 by truncations and omitting the jumps.
Namely, for all $z\geq 0$ and $0<\eps<m$, let us consider the SDE
 \begin{align}\label{E:aaa}
 \begin{split}
  Z_t&=z+\big({\log E\big(\ee^{\xi_1};\zeta>1\big)\big)t+ }\sigma \int_0^{t} \sqrt{(Z_s\wedge m)} \,\dd B_s\\
     &\quad +q{ \int_0^t\int_0^{1/\eps}   \mathbf 1_{\{r Z_s\leq 1\}}(Z_s\wedge m)\, \dd s\, \dd r}\\
     &\quad - \int_0^t\int_0^{1/\eps}\int_{|u|\geq \eps}
                 \mathbf 1_{\{r Z_s\leq 1\}}(Z_s\wedge m)(\ee^u-1)\,\dd s\,\dd r\,\Pi(\dd u).
 \end{split}
 \end{align}
An easy calculation shows that this is the same as the SDE
  \begin{align}\label{eqn:o}
  \begin{split}
	Z_t  &= z +  \big(\log E\big(\ee^{\xi_1};\zeta>1\big)\big)t
                  + \sigma \int_0^{t} \!\!\sqrt{Z_s\wedge m} \,\dd B_s\\
         &\phantom{=\,} - \left(\int_{|u|\geq \eps}(\ee^u-1)\,\Pi(\dd u)-q\right) \int_0^tb^{\eps,m}(Z_s)\,\dd s,
  \end{split}
 \end{align}	
 where
 \begin{align*}
  b^{\eps,m}(x)
     :=\mathbf 1_{\{x>0\}}\frac{x\wedge m}{x\vee \eps}
      =\begin{cases}
				\frac{m}{x}& \text{ if \ $x\geq m$,}\\
				1& \text{ if \ $\eps\leq x\leq m$,}\\
				\frac{x}{\eps} & \text{ if \ $0\leq x\leq \eps$,}\\
				0&\text{ if \ }x<0.
	   \end{cases}
 \end{align*}}

\begin{lemma}\label{L:aaa}
Suppose that $\log E\big(\ee^{\xi_1};\zeta>1\big)>0$ and $z\geq 0$.
Then, for all $0<\eps<m$, there is a pathwise unique non-negative strong solution
 to the (ordinary) SDE {\eqref{eqn:o} for $t\geq 0$.}
\end{lemma}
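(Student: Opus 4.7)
The plan is to regard the SDE \eqref{eqn:o} as a classical one-dimensional It\=o SDE with bounded Lipschitz drift and bounded H\"older-$1/2$ continuous diffusion coefficient, to which the standard Yamada-Watanabe machinery applies. First extend $\sqrt{x\wedge m}$ to all of $\R$ by setting it equal to $0$ for $x\leq 0$, and rewrite \eqref{eqn:o} in the form $\dd Z_t=h(Z_t)\,\dd t+\sigma_m(Z_t)\,\dd B_t$ with
\[
h(x):=\log\E(\ee^{\xi_1};\zeta>1)-\Bigl(\int_{|u|\geq\eps}(\ee^u-1)\,\Pi(\dd u)-q\Bigr)b^{\eps,m}(x),\qquad \sigma_m(x):=\sigma\sqrt{(x\wedge m)_+}.
\]
The explicit piecewise formula for $b^{\eps,m}$ shows that it is continuous on $\R$, bounded by $1$, and globally $(1/\eps)$-Lipschitz; hence $h$ is bounded and Lipschitz on $\R$. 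Moreover $\sigma_m$ is bounded by $\sigma\sqrt{m}$ and H\"older-$1/2$ continuous, since $x\mapsto x\wedge m$ is $1$-Lipschitz and $x\mapsto\sqrt{x_+}$ is H\"older-$1/2$.

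With these regularity properties in hand, pathwise uniqueness follows from the classical Yamada-Watanabe argument \cite{YW1}, a considerably simpler version of the computation carried out in Proposition \ref{prop:uniqueness} without any Poissonian terms. Weak existence of a solution defined for all $t\geq 0$ follows from Skorokhod's theorem since $h$ and $\sigma_m$ are bounded and continuous on $\R$ (see, e.g., Ikeda-Watanabe \cite[Chapter IV, Theorem 2.3]{IW}). Combining weak existence with pathwise uniqueness, the Yamada-Watanabe theorem yields a pathwise unique \emph{strong} solution $(Z_t)_{t\geq 0}$ of the extended SDE.

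It remains to verify that $Z_t\geq 0$ almost surely whenever $z\geq 0$. Since $\xi$ is spectrally negative, $\Pi$ is supported on $(-\infty,0)$, so $\int_{|u|\geq\eps}(\ee^u-1)\,\Pi(\dd u)\leq 0$, and together with $b^{\eps,m}\geq 0$ and $q\geq 0$ this gives $h(x)\geq\log\E(\ee^{\xi_1};\zeta>1)>0$ for every $x\in\R$. Consider the auxiliary SDE $\dd Y_t=\sigma_m(Y_t)\,\dd B_t$, $Y_0=0$, driven by the same Brownian motion; since $\sigma_m$ is H\"older-$1/2$ and $\sigma_m(0)=0$, the Yamada-Watanabe argument identifies $Y\equiv 0$ as its unique solution. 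A standard comparison theorem for one-dimensional SDEs with identical H\"older-$1/2$ diffusion coefficients (see, e.g., Ikeda-Watanabe \cite[Chapter VI, Theorem 1.1]{IW}), applied to the drift pair $h\geq 0$, then yields $Z_t\geq Y_t=0$ for all $t\geq 0$ almost surely.

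The main obstacle is precisely this final step: without the strict positivity $\log\E(\ee^{\xi_1};\zeta>1)>0$ and the spectral negativity of $\xi$ ensuring $h\geq 0$ everywhere, the coefficient extension to $(-\infty,0)$ would not automatically keep the solution inside $[0,\infty)$. All remaining steps reduce to invoking classical one-dimensional SDE theory.
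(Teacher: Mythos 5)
Your proposal is correct, and for existence and pathwise uniqueness it follows essentially the same route as the paper: extend the diffusion coefficient to $\sigma\sqrt{(x\wedge m)\vee 0}$, observe that the drift is bounded and globally Lipschitz while the diffusion coefficient is H\"older-$1/2$, and invoke the Yamada--Watanabe results (Ikeda--Watanabe, Theorems IV.2.3 and IV.3.2). The only genuine divergence is the non-negativity step. The paper applies Proposition 2.1 of Fu and Li, which needs only that the diffusion coefficient vanishes for $x\leq 0$ and that the drift is strictly positive for $x\leq 0$; since $b^{\eps,m}(x)=0$ there, the drift equals $\log \E(\ee^{\xi_1};\zeta>1)>0$ on $(-\infty,0]$ regardless of the sign of $\int_{|u|\geq\eps}(\ee^u-1)\,\Pi(\dd u)$. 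You instead compare with the identically-zero solution of the driftless equation via the one-dimensional comparison theorem, which forces you to verify $h(x)\geq 0$ on \emph{all} of $\R$, and for that you must invoke spectral negativity of $\xi$ (so that $\int_{|u|\geq\eps}(\ee^u-1)\,\Pi(\dd u)\leq 0$). Under the paper's standing hypotheses your argument is valid, but it is less robust: it would break down for a L\'evy measure with enough positive jump mass to make $\int_{|u|\geq\eps}(\ee^u-1)\,\Pi(\dd u)-q$ exceed $\log \E(\ee^{\xi_1};\zeta>1)$, whereas the Fu--Li criterion (and hence the paper's proof) survives in the larger generality $\E(\ee^{2\xi_1})<\infty$ alluded to in Remark \ref{remark_cond}. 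Your claimed regularity of $b^{\eps,m}$ (bounded by $1$, $(1/\eps)$-Lipschitz) and the identification $Y\equiv 0$ as the pathwise unique solution of the auxiliary equation are both correct.
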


\begin{proof}
First let us consider the modified equation
 \begin{align}\label{eqn:ob}
  \begin{split}
	Z_t &= z + \big({\log E\big(\ee^{\xi_1};\zeta>1\big)\big)t
             - \left(\int_{|u|\geq \eps} (\ee^u-1)\,\Pi(\dd u)-q\right)} \int_0^tb^{\eps,m}(Z_s)\,\dd s\\
        & \quad+ \sigma \int_0^{t} \sqrt{(Z_s\wedge m)\vee 0} \,\dd B_s, \quad  t \geq 0,
  \end{split}
 \end{align}	
 so that the integrand of the Brownian integral is a priori well-defined.
The drift is globally Lipschitz (since it is piecewise continuously differentiable with bounded derivatives)
 and further $\vert \sqrt{x} -\sqrt{y}\vert\leq \sqrt{\vert x-y\vert}$, $x,y\geq 0$,
 so that the results of Yamada and Watanabe (see, e.g. Ikeda and Watanabe \cite[{Theorems IV.2.3 and IV.3.2}]{IW})
 ensure that the SDE \eqref{eqn:ob} has a pathwise unique strong solution for all non-negative
 initial values $z\geq 0$.
The non-negativity of the solution follows directly from the positivity of the drift close to zero.
To give a precise argument we utilize Proposition 2.1 of  Fu and Li \cite{FL}:
 Indeed, the functions
 \[
    \R\ni x\mapsto{\log E\big(\ee^{\xi_1};\zeta>1\big)
                                   - \left(\int_{|u|\geq \eps} (\ee^u-1)\,\Pi(\dd u)-q\right) \,b^{\eps,m}(x)}
                                 \]
                              and
                              \[
    \R\ni x\mapsto \sqrt{(x\wedge m) \vee 0}
 \]     	
 are continuous, $\sqrt{(x\wedge m) \vee 0}=0$ if $x\leq 0$ and
  \[
{    \log E\big(\ee^{\xi_1};\zeta>1\big)
        - \left(\int_{|u|\geq \eps} (\ee^u-1)\,\Pi(\dd u)-q\right)} \, b^{\eps,m}(x)
       >0, \qquad x\leq 0.
 \]
 Since solutions are non-negative we can drop the additional maximum with $0$ in the integrand
 for the Brownian integral so that there is a pathwise unique non-negative strong solution to (\ref{eqn:o}).
\end{proof}

Next, we add jumps to the SDE (\ref{eqn:o}) {(or equivalently to (\ref{E:aaa}))}.
By the choice of the truncation, jumps come with finite rate so that they can be added merely "by hands"
 (the procedure is called interlacing).

\begin{lemma}
Suppose that {$\log E\big(\ee^{\xi_1};\zeta>1\big)>0$} and $z\geq 0$.
Then, for all $0<\eps<m$, there is a pathwise unique non-negative strong solution to
	 \begin{align}\label{eqn:o2}
	\begin{split}
		Z_t	&=z+\big({\log E\big(\ee^{\xi_1};\zeta>1\big)\big)t}+ \sigma \int_0^{t} \sqrt{(Z_s\wedge m)} \dd B_s\\
		&\quad{-\int_0^t\int_0^{1/\eps} \mathbf 1_{\{r Z_{s-}\leq 1\}} (Z_s\wedge m) (\mathcal M-\mathcal M')(\dd s,\dd r)}\\
            &\quad +\int_0^t\int_0^{1/\eps}\int_{|u|\geq \eps}   \mathbf 1_{\{r Z_{s-}\leq 1\}}
                (Z_{s-}\wedge m)(\ee^u-1) (\mathcal N-\mathcal N')(\dd s,\dd r,\dd u)
	\end{split}
	\end{align}
 for $t\geq 0$.
\end{lemma}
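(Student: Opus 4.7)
The plan is to build the solution by interlacing, adding the jumps ``by hand'' to the solution of the continuous-type SDE \eqref{eqn:o} from Lemma \ref{L:aaa}. The crucial feature of \eqref{eqn:o2} is that both Poissonian integrals are restricted to $\{r\leq 1/\eps\}$ and, for $\mathcal N$, additionally to $\{|u|\geq \eps\}$. On these bounded regions the intensity measures are finite, namely $\eps^{-1}\Pi(\{|u|\geq\eps\})<\infty$ for $\mathcal N$ and $q\eps^{-1}<\infty$ for $\mathcal M$. Enumerating the atoms of $\mathcal N$ and $\mathcal M$ falling in these regions in increasing order of their time coordinate therefore yields a sequence $0=T_0<T_1<T_2<\cdots$ with $T_n\uparrow\infty$ almost surely.

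The construction of $Z$ proceeds inductively. Given $Z_{T_n}\geq 0$, on $[T_n,T_{n+1})$ neither $\mathcal N$ nor $\mathcal M$ has any atom in its relevant region, so the Poissonian integrals in \eqref{eqn:o2} reduce on this interval to minus their compensators. Using the identity $\int_0^{1/\eps}\mathbf 1_{\{r z\leq 1\}}\,\dd r=1/(z\vee \eps)$ for $z\geq 0$, together with $(z\wedge m)/(z\vee \eps)=b^{\eps,m}(z)$, one checks that \eqref{eqn:o2} reduces on $[T_n,T_{n+1})$ exactly to the ordinary SDE \eqref{eqn:o} started from $Z_{T_n}$ at time $T_n$. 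By Lemma \ref{L:aaa} this has a pathwise unique non-negative strong solution, which we take for $Z$ on that interval. At time $T_{n+1}$, depending on whether the atom belongs to $\mathcal N$ (with mark $(R_{n+1},U_{n+1})$) or to $\mathcal M$ (with mark $R_{n+1}$), we set
\[
  Z_{T_{n+1}} := Z_{T_{n+1}-} + \mathbf 1_{\{R_{n+1}Z_{T_{n+1}-}\leq 1\}}(Z_{T_{n+1}-}\wedge m)(\ee^{U_{n+1}}-1)
\]
or
\[
  Z_{T_{n+1}} := Z_{T_{n+1}-} - \mathbf 1_{\{R_{n+1}Z_{T_{n+1}-}\leq 1\}}(Z_{T_{n+1}-}\wedge m).
\]

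The main obstacle is verifying that non-negativity is preserved at each jump, and this is precisely where the spectral negativity of $\xi$ enters. If the indicator vanishes, $Z_{T_{n+1}}=Z_{T_{n+1}-}\geq 0$. Otherwise, since $\Pi$ is supported on $(-\infty,0)$ one has $U_{n+1}\leq 0$ and $\ee^{U_{n+1}}-1\in[-1,0]$. For an $\mathcal N$-jump this gives $Z_{T_{n+1}}=Z_{T_{n+1}-}\ee^{U_{n+1}}\geq 0$ when $Z_{T_{n+1}-}\leq m$, and $Z_{T_{n+1}}=Z_{T_{n+1}-}+m(\ee^{U_{n+1}}-1)\geq Z_{T_{n+1}-}-m>0$ when $Z_{T_{n+1}-}>m$; for an $\mathcal M$-jump, $Z_{T_{n+1}}=\max(Z_{T_{n+1}-}-m,0)\geq 0$. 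Since $T_n\uparrow\infty$ almost surely, concatenating these pieces yields a non-negative c\`adl\`ag process on $[0,\infty)$ which, by construction, satisfies \eqref{eqn:o2}.

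Pathwise uniqueness follows from the same interlacing scheme: any two non-negative solutions coincide on $[0,T_1)$ by Lemma \ref{L:aaa}, and hence at $T_1-$ as well; since the jump at $T_1$ is a deterministic function of $Z_{T_1-}$ and of the atom located there, they agree at $T_1$, and an induction over $n$ concludes the argument.
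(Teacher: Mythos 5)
Your proposal is correct and follows essentially the same interlacing strategy as the paper: enumerate the (finite-rate) atoms in the truncated regions, solve the ordinary SDE \eqref{eqn:o} between consecutive atoms via Lemma \ref{L:aaa} (the paper additionally notes that this lemma applies to random non-negative initial conditions and that $(B_{S_k+s}-B_{S_k})_{s\geq 0}$ is a Brownian motion independent of $\cG_{S_k+}$), apply the jumps by hand, and deduce pathwise uniqueness inductively; the only cosmetic difference is that the paper sets $q=0$ and remarks that the killing measure is handled likewise, whereas you interlace $\cN$ and $\cM$ jointly. One small correction: spectral negativity is not actually needed for non-negativity at the jump times, since $\ee^u-1\geq -1$ for \emph{every} $u\in\R$ already gives $x+\mathbf 1_{\{rx\leq 1\}}(x\wedge m)(\ee^u-1)\geq x-(x\wedge m)\geq 0$, which is exactly the inequality the paper invokes; spectral negativity is only essential later, in the pathwise uniqueness argument of Proposition \ref{prop:uniqueness}.
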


\begin{proof}
{First note that the} existence of a pathwise unique non-negative strong solution to the SDE (\ref{E:aaa}) {for $t\geq 0$
 follows by} Lemma \ref{L:aaa}.
Next, we add the jumps that are chosen according to  independent Poisson random measures $\mathcal N$ on
 $(0,\infty)\times (0,\infty)\times \R$ with intensity measure
  $\mathcal N'(\dd s,\dd r,\dd u)=\dd s\,\otimes\,\dd r\,\otimes\,\Pi(\dd u)$ {and  $\mathcal M$ on
 $(0,\infty)\times (0,\infty)$ with intensity measure
  $\mathcal M'(\dd s,\dd r)={q} \dd s\,\otimes\,\dd r$}.
Note that the truncated SDE (\ref{eqn:o}) was chosen in a way that it only contains the compensation on sets with finite intensity.
Hence, only finitely many jumps have to be added in finite time intervals.
Those can be added via interlacing, i.e. using the pathwise unique strong {solution of} the SDE (\ref{eqn:o}),
 {as in the proof of Proposition 2.2 in Fu and Li \cite{FL}.}
{To keep the proof shorter we assume $q=0$ since the additional integral corresponding to killing
 can be dealt with likewise.}
Let $\{S_k : k\in\N\}$ be the set of jump times of the Poisson process
 \[
   \R\ni t\mapsto \int_0^t \int_0^{1/\varepsilon}\int_{\vert u\vert\geq \varepsilon}
                         1\,\cN(\dd s,\dd r,\dd u)
                         = \cN\big((0,t)\times (0,1/\eps)\times\{u\in\R : \vert u\vert\geq \eps\}\big).
 \]
Clearly, $S_k$ is a sum of independent exponentially distributed random variables
 with parameter $\frac{1}{\eps}\Pi(\vert u\vert\geq \eps)$ and hence we have $S_k\to\infty$ almost surely
 as $k\to\infty$.
For $0\leq t<S_1$, let $ Z^{\eps,m}_t$ be the solution {of \eqref{E:aaa} with $q=0$ given by} Lemma \ref{L:aaa}.
To set up an induction, suppose that $ Z^{\eps,m}_t$ has been defined for $0\leq t<S_k$, and let
 \begin{align}\label{help15}
   \xi:= Z^{\eps,m}_{S_k-} + \int_{\{S_k\}}\int_0^{1/\eps}\int_{|u|\geq \eps}
                                 \mathbf 1_{\{r Z^{\eps,m}_{s-}\leq 1\}}
                                ( Z^{\eps,m}_{s-}\wedge m)(\ee^u-1) \mathcal N(\dd s,\dd r,\dd u).
 \end{align}
Note that $\xi$ is non-negative which follows by the simple fact that
 {$x+\mathbf 1_{\{x\leq \eps\}}(x\wedge m)(\ee^u-1)\geq 0$,} $x\geq 0$, $u\in\R$.
Since the SDE (\ref{eqn:o}) has a pathwise unique non-negative strong solution, there is also a pathwise
 unique non-negative strong solution, say $(X_k(t))_{t\geq 0}$, to the SDE
 \begin{align}\label{help16}
 \begin{split}
     Z_t&=\xi+(\log E\big(\ee^{\xi_1}\big))t+ \sigma \int_0^{t} \sqrt{(Z_s\wedge m)} \,\dd B_{S_k+s}\\
             &\quad- \int_{|u|\geq \varepsilon}(\ee^u-1) \,\Pi(\dd u)
                 \int_0^t b^{\eps,m}(Z_s)\,\dd s,\quad t\geq 0.
  \end{split}
 \end{align}
Here we call the attention to the fact that the SDE (\ref{eqn:o}) (and hence the SDE \eqref{help16}, too)
 has a pathwise unique non-negative strong solution for all non-negative
 {random} initial condition (indeed, Yamada and Watanabe's theorems and Proposition 2.1 in Fu and Li \cite{FL}
 are valid for non-negative {random} initial conditions, too).
We also note that the strong Markov property of $B$ and the independence of $B$ and $\cN$ yield
 that $(B_{S_k+t} - B_{S_k})_{t\geq 0}$ is a standard Brownian motion with respect to
 its natural filtration and it is independent of $\cG_{S_k+}$, see, e.g. Karatzas and Shreve
 \cite[Theorem 2.6.16]{KS}.
For $S_k\leq t < S_{k+1}$, we set $ Z_t^{\eps,m}:=X_k(t-S_k)$.
Then, for $S_k\leq t<S_{k+1}$, we have
 \begin{align*}
        Z^{\eps,m}_t
        &=X_k(t-S_k)
         = \xi + (\log E\big(\ee^{\xi_1}\big))(t-S_k)
              + \sigma \int_0^{t-S_k} \sqrt{(X_k(s)\wedge m)} \,\dd B_{S_k+s}\\
             &\quad- \int_{|u|\geq \varepsilon}(\ee^u-1) \,\Pi(\dd u)
                 \int_0^{t-S_k} b^{\eps,m}(X_k(s))\,\dd s\\
         &= Z^{\eps,m}_{S_k-}
            + \int_{\{S_k\}}\int_0^{1/\eps}\int_{|u|\geq \eps}
                                 \mathbf 1_{\{r Z^{\eps,m}_{s-}\leq 1\}}
                                ( Z^{\eps,m}_{s-}\wedge m)(\ee^u-1) \mathcal N(\dd s,\dd r,\dd u)\\
         &\quad + (\log E\big(\ee^{\xi_1}\big))(t-S_k)
                + \sigma \int_{S_k}^t \sqrt{(Z^{\eps,m}_s\wedge m)} \,\dd B_s\\
         &\quad- \int_{|u|\geq \varepsilon}(\ee^u-1) \,\Pi(\dd u)
                 \int_{S_k}^t b^{\eps,m}(Z^{\eps,m}_s)\,\dd s.
 \end{align*}
By the induction hypothesis, we obtain
 \begin{align*}
   Z^{\eps,m}_{S_k-}
      &= z + (\log E\big(\ee^{\xi_1}\big))S_k
                + \sigma \int_0^{S_k} \sqrt{(Z^{\eps,m}_s\wedge m)} \,\dd B_s\\
      &\phantom{=\;}  - \int_0^{S_k}\int_0^{1/\eps}\int_{|u|\geq \eps}
                                 \mathbf 1_{\{r Z^{\eps,m}_s\leq 1\}}
                                ( Z^{\eps,m}_s\wedge m)(\ee^u-1)
                                \,\dd s\,\dd r\,\Pi(\dd u)\\
      &\phantom{=\;} + \sum_{\ell=1}^{k-1} \int_{\{S_\ell\}}\int_0^{1/\eps}\int_{|u|\geq \eps}
                                 \mathbf 1_{\{r Z^{\eps,m}_{s-}\leq 1\}}
                                ( Z^{\eps,m}_{s-}\wedge m)(\ee^u-1) \mathcal N(\dd s,\dd r,\dd u),
  \end{align*}
so that, for $S_k\leq t<{S_{k+1}}$, $Z^{\eps,m}_t$ satisfies the SDE
 \begin{align}\label{help_SDE_interlacing}
   \begin{split}
	Z_t&=z+(\log E\big(\ee^{\xi_1}\big))t
                  + \sigma \int_0^{t} \sqrt{(Z_s\wedge m)} \dd B_s
       -\int_{|u|\geq \eps} (\ee^u-1)\,\Pi(\dd u)\int_0^{t} b^{\eps,m}(Z_s) \,\dd s\\
	   &\quad+\int_0^t\int_0^{1/\eps}\int_{|u|\geq \eps}\mathbf 1_{\{rZ_{s-}\leq 1\}}
                       (Z_{s-}\wedge m)(\ee^u-1) \mathcal N(\dd s,\dd r,\dd u).
  \end{split}
 \end{align}
Hence, by induction, this defines a process $( Z_t^{\eps,m})_{t\geq 0}$ which is a non-negative
 strong solution to the SDE \eqref{help_SDE_interlacing} for all $t\geq 0$.
By calculating the compensated integral in the SDE \eqref{eqn:o2} we get that
 the SDE \eqref{help_SDE_interlacing} is the same as the SDE \eqref{eqn:o2} yielding that
 $Z^{\eps,m}$ is a non-negative strong solution of \eqref{eqn:o2}.
The non-negativity of $Z^{\eps,m}$ follows by the construction.
Finally, pathwise uniqueness for the SDE \eqref{eqn:o2} follows from that of \eqref{eqn:o} {and \eqref{help16}}.
Indeed, since jumps come with finite rate, uniqueness holds between the jumps so that also the jumps themselves
 are uniquely determined.
\end{proof}

So far, we got around the problem of having infinitely many small jumps by cutting the L\'evy measure $\Pi$ at
 $\eps$.
Note also that infinitely many large jumps were avoided by cutting the state-dependent jump intensity by $1/\eps$.
This allowed us to construct solutions by standard SDE theory and "by hands" (interlacing procedure).
Next, we get rid of these two restrictions via martingale problem arguments.
The additional truncation by $m$ remains in order to ensure tightness when $\eps$ goes to zero.

\begin{lemma}\label{lem:tightness}
Suppose that {$\log E\big(\ee^{\xi_1};\zeta>0\big)>0$}, $z\geq 0$ and for all $0<\eps<m$,
 let $Z^{\eps,m}$ be the pathwise unique non-negative strong solution to (\ref{eqn:o2}).
Then, {for every $m>0$ and every sequence $(\eps_n)_{n\in\N}$ tending to zero,}
 the family {$\{ Z^{\eps_n,m}:n\in\N\}$} is tight in Skorokhod's $J_1$ topology on $\D$.
\end{lemma}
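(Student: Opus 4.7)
The plan is to verify Aldous's tightness criterion for the family $\{Z^{\eps_n,m}\}_{n\in\N}$. I would first decompose each solution according to the SDE \eqref{eqn:o2} as
\[
 Z^{\eps_n,m}_t = z + d\,t + M^{(1,n)}_t + M^{(2,n)}_t + M^{(3,n)}_t,
\]
where $d:=\log E(\ee^{\xi_1};\zeta>1)$ and $M^{(1,n)},M^{(2,n)},M^{(3,n)}$ are respectively the Brownian, killing and Poissonian compensated-jump martingales appearing in \eqref{eqn:o2}. The key uniform estimate is the elementary identity
\[
 \int_0^{1/\eps_n}\mathbf 1_{\{rx\leq 1\}}(x\wedge m)^2\,\dd r \;=\; \frac{(x\wedge m)^2}{x\vee \eps_n}\;\leq\; m,\qquad x\geq 0,
\]
which, combined with the fact that spectral negativity of $\xi$ forces $\Pi$ to be supported on $(-\infty,0]$ (so $(\ee^u-1)^2\leq u^2\wedge 1$ is $\Pi$-integrable), yields the uniform bounds $\langle M^{(1,n)}\rangle_t\leq \sigma^2 m t$, $\langle M^{(2,n)}\rangle_t\leq q\,m t$ and $\langle M^{(3,n)}\rangle_t\leq m t\int_\R(\ee^u-1)^2\,\Pi(\dd u)$ for all $n\in\N$.

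With these quadratic-variation bounds in hand, Aldous's criterion reduces to two ingredients. Compact containment follows from the triangle inequality together with Doob's $L^2$-inequality applied to each $M^{(i,n)}$, giving $E\big(\sup_{t\leq T}(Z^{\eps_n,m}_t)^2\big)\leq C(z,m,T)$ uniformly in $n$, so Markov's inequality yields $\sup_n P\big(\sup_{t\leq T}Z^{\eps_n,m}_t>K\big)\to 0$ as $K\to\infty$. For the stopping-time modulus condition, pick any $(\mathcal G_t)$-stopping time $\tau$ bounded by $T$ and any $\delta\in(0,1)$; optional sampling applied to the martingales $(M^{(i,n)})^2-\langle M^{(i,n)}\rangle$ gives
\[
 E\big((M^{(i,n)}_{\tau+\delta}-M^{(i,n)}_\tau)^2\big)=E\big(\langle M^{(i,n)}\rangle_{\tau+\delta}-\langle M^{(i,n)}\rangle_\tau\big)\leq C(m)\,\delta,
\]
and together with the deterministic drift contribution $|d|\delta$, Markov's inequality then yields $P\big(|Z^{\eps_n,m}_{\tau+\delta}-Z^{\eps_n,m}_\tau|>\eta\big)\leq C(m)\delta/\eta^2\to 0$ as $\delta\downarrow 0$, uniformly in $n$ and in $\tau$. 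Aldous's criterion (e.g.\ Ethier and Kurtz \cite{EK}, Theorem 3.8.6) then concludes tightness in Skorokhod's $J_1$ topology on $\D$.

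The only real obstacle — and the reason the truncation at level $m$ was introduced in \eqref{eqn:o2} in the first place — is uniformity of all estimates in $n$ (equivalently, in $\eps_n\downarrow 0$). Without the $m$-cap, the integrand of the Poissonian martingale is $Z_{s-}(\ee^u-1)$, and the $r$-integration produces the factor $Z_s$, so the second moment of $M^{(3,n)}$ would involve $E(Z_s^2)$ and could blow up as $\eps_n\downarrow 0$. The cap at $m$ replaces that state-dependent factor by the deterministic bound $m$, at which point the $u$-integral is controlled uniformly in $\eps_n$ by spectral negativity, and all subsequent estimates become $n$-free.
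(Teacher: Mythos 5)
The proposal is correct and takes essentially the same approach as the paper's proof: both verify Aldous's criterion via the uniform identity $\int_0^{1/\eps_n}\mathbf 1_{\{rx\leq 1\}}(x\wedge m)^2\,\dd r=\frac{(x\wedge m)^2}{x\vee\eps_n}\leq m$, which yields $n$-free second-moment (quadratic-variation) bounds for the Brownian, killing and Poissonian martingale parts, and both control increments at stopping times in $L^2$ before applying Markov's inequality. The only cosmetic difference is that you establish compact containment via Doob's inequality, whereas the paper only checks tightness of the one-dimensional marginals at fixed times, which suffices for Aldous's criterion.
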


\begin{proof}
For the proof we apply Aldous's tightness criterion (see Aldous \cite{Aldous2} or Chapter 3.8 of Ethier and Kurtz \cite{EK}).
According to this, to prove that  {$\{ Z^{\eps_n,m}:n\in\N\}$} is tight in $\D$ it is enough to show that
 \begin{itemize}
   \item[(i)] for every fixed $t\geq 0$, the set of random variables  {$\{ Z^{\eps_n,m}_t:n\in\N\}$}
              is tight,
   \item[(ii)] for every sequence of stopping times $(\tau_n)_{n\in\N}$ (with respect to the
               filtration $(\cG_t)_{t\geq 0}$) bounded above by $T>0$ and for every sequence of
               positive real numbers $(\delta_n)_{n\in\N}$ converging to $0$,
               {$Z^{\eps_n,m}_{\tau_n +\delta_n} -  Z^{\eps_n,m}_{\tau_n} \to 0$
               in probability as $n\to\infty$.}
 \end{itemize}
To prove (i), by Markov's inequality it is enough to check that, for every fixed $t\geq 0$,
 \begin{align}\label{help17}
  \sup_{n\in\N} E\Big[  (Z^{{\eps_n},m}_t)^2 \Big]<\infty.
 \end{align}	
Using that $(a+b+c+d{+e})^2\leq {5(a^2+b^2+c^2+d^2+e^2)}$, ${a,b,c,d,e\in\R}$, we obtain that
 $E\Big[  ( Z^{{ \eps_n},m}_t)^2 \Big]$ can be bounded by
 \begin{align*}
  5z^2 & + {5\big(\log E\big(\ee^{\xi_1};\zeta>1\big)\big)^2 t^2}
          + 5\sigma^2 E\left(
              \left(\int_0^t  \sqrt{\big( Z^{{ \eps_n},m}_s\wedge m\big)} \,\dd B_s \right)^2 \right)\\
                     & {+ 5 E\left(
                \left(
                  \int_0^t\int_0^{1/{ \eps_n}}
                        \mathbf 1_{\{rZ^{{ \eps_n},m}_{s-}\leq 1 \}}
                  ( Z^{{ \eps_n},m}_{s-}\wedge m)(\mathcal M-\mathcal M')(\dd s,\dd r)
                \right)^2
               \right)}\\
       & + 5 E\left(
                \left(
                  \int_0^t\int_0^{1/{ \eps_n}}\int_{|u|\geq { \eps_n}}
                        \mathbf 1_{\{rZ^{{ \eps_n},m}_{s-}\leq 1 \}}
                  ( Z^{{ \eps_n},m}_{s-}\wedge m)(\ee^u-1) (\mathcal N-\mathcal N')(\dd s,\dd r,\dd u)
                \right)^2
               \right),
              \end{align*}
 which, by It\=o's isometry and page 62 in Ikeda and Watanabe \cite{IW},
 can be bounded from above by
               \begin{align*}
     & \quad 5z^2 + {5\big(\log E\big(\ee^{\xi_1};\zeta>1\big)\big)^2 t^2}
             + 5\sigma^2 E\left(\int_0^t ( Z^{{ \eps_n},m}_s\wedge m)\,\dd s \right)\\
     &\quad {+ 5E\left(q
                  \int_0^t\int_0^{1/{ \eps_n}}
                        \mathbf 1_{\{rZ^{{ \eps_n},m}_s\leq 1 \}}
                   ( Z^{{ \eps_n},m}_s\wedge m)^2\,\dd s\,\dd r
                 \right)}\\
     &\quad  + 5E\left(
                  \int_0^t\int_0^{1/{ \eps_n}}\int_{|u| \geq { \eps_n} }
                        \mathbf 1_{\{rZ^{{ \eps_n},m}_s\leq 1 \}}
                   ( Z^{{ \eps_n},m}_s\wedge m)^2(\ee^u-1)^2\,\dd s\,\dd r\,\Pi(\dd u)
                 \right)\\
     & \leq 5z^2 + {5\big(\log E\big(\ee^{\xi_1};\zeta>1\big)}\big)^2 t^2 + 5\sigma^2 mt
                 +{5E\left(q\int_0^t
                 (  Z^{{ \eps_n},m}_s\wedge m)\,\dd s
                 \right)}\\
    &\quad        + 5E\left(\int_0^t\int_{|u|\geq { \eps_n}}
                   ( Z^{{ \eps_n},m}_s\wedge m)(\ee^u-1)^2\,\dd s\,\Pi(\dd u)
                 \right)\\
     & \leq 5z^2 + {5\big(\log E\big(\ee^{\xi_1};\zeta>1\big)\big)^2} t^2
            + 5mt\left(\sigma^2 +q+ \int_{|u|\geq { \eps_n}} (\ee^u-1)^2\,\Pi(\dd u) \right),
            \quad t\geq 0,
 \end{align*}
 which is uniformly bounded in ${ n\in\N}$ since $E(\ee^{2\xi_1})<\infty$ and $\Pi$ is a L\'evy measure.

Now we turn to (ii) proving the stronger statement that $Z^{{ \eps_n},m}_{\tau_n+\delta_n} -  Z^{{ \eps_n},m}_{\tau_n}$
 converges to 0 in $L^2$ as ${ n\to\infty}$.
Namely, by the SDE \eqref{eqn:o2} and using that {$(a+b+c+d)^2\leq 4(a^2+b^2+c^2+d^2)$}, we have
 \begin{align*}
   &E\left( \vert  Z^{{ \eps_n},m}_{\tau_n+\delta_n} -  Z^{{ \eps_n},m}_{\tau_n}\vert^2 \right)\\
   &= E\Bigg[\Bigg\vert \big({\log E\big(\ee^{\xi_1};\zeta>1\big)\big)\delta_n}
               + \sigma \int_{\tau_n}^{\tau_n+\delta_n}  \sqrt{\big( Z^{{ \eps_n},m}_s\wedge m\big)} \,\dd B_s\\
               &\quad\quad{-\int_{\tau_n}^{\tau_n+\delta_n}\int_0^{1/{ \eps_n}}
                        \mathbf 1_{\{rZ^{{ \eps_n},m}_{s-}\leq 1\}}
                  ( Z^{{ \eps_n},m}_{s-}\wedge m) (\mathcal M-\mathcal M')(\dd s,\dd r)}\\
   &\quad \quad   + \int_{\tau_n}^{\tau_n+\delta_n}\int_0^{1/{ \eps_n}}\int_{|u|\geq { \eps_n}}
                        \mathbf 1_{\{rZ^{{ \eps_n},m}_{s-}\leq 1\}}
                  ( Z^{{ \eps_n},m}_{s-}\wedge m)(\ee^u-1) (\mathcal N-\mathcal N')(\dd s,\dd r,\dd u)
           \Bigg\vert^2 \Bigg]
  \end{align*}
  \begin{align*}
   &\leq 4{\big(\log E\big(\ee^{\xi_1};\zeta>1\big)\big)^2\delta_n^2}
        + 4\sigma^2 E\left(\int_{\tau_n}^{\tau_n+\delta_n}\sqrt{\big( Z^{{ \eps_n},m}_s\wedge m\big)}
                            \,\dd B_s\right)^2\\
  &\quad {+ 4E\left(\int_{\tau_n}^{\tau_n+\delta_n}\int_0^{1/{ \eps_n}}
                     \mathbf 1_{\{rZ^{{ \eps_n},m}_{s-}\leq 1\}}
                     ( Z^{{ \eps_n},m}_{s-}\wedge m)
                     (\mathcal M-\mathcal M')(\dd s,\dd r) \right)^2}\\
  &\quad + 4E\left(\int_{\tau_n}^{\tau_n+\delta_n}\int_0^{1/{ \eps_n}}\int_{|u|\geq { \eps_n}}
                     \mathbf 1_{\{rZ^{{ \eps_n},m}_{s-}\leq 1\}}
                     ( Z^{{ \eps_n},m}_{s-}\wedge m)(\ee^u-1)
                     (\mathcal N-\mathcal N')(\dd s,\dd r,\dd u) \right)^2.
 \end{align*}
By Proposition 3.2.10 in Karatzas and Shreve \cite{KS},
 \begin{align*}
    E\left(\int_{\tau_n}^{\tau_n+\delta_n}\sqrt{\big( Z^{{ \eps_n},m}_s\wedge m\big)}
                            \,\dd B_s\right)^2
      = E\left(\int_{\tau_n}^{\tau_n+\delta_n} ( Z^{{ \eps_n},m}_s\wedge m) \,\dd s \right),
 \end{align*}
 and, by Theorem II.1.33 in Jacod and Shiryaev \cite{JS}, the optimal stopping theorem, using
 the same arguments as in the proof of (3.2.22) in Karatzas and Shreve \cite{KS}, we get{
 \begin{align*}
   &E\left(\int_{\tau_n}^{\tau_n+\delta_n}\int_0^{1/{ \eps_n}}
                     \mathbf 1_{\{rZ^{{ \eps_n},m}_{s-}\leq 1\}}
                     ( Z^{{ \eps_n},m}_{s-}\wedge m)
                     (\mathcal M-\mathcal M')(\dd s,\dd r) \right)^2\\
   &\quad= E\left(q  \int_{\tau_n}^{\tau_n+\delta_n}\int_0^{1/{ \eps_n}}
                \mathbf 1_{\{r Z^{{ \eps_n},m}_s\leq 1\}}( Z^{{ \eps_n},m}_s\wedge m)^2 \,\dd s\,\dd r\ \right)
 \end{align*}
 and}
 \begin{align*}
   &E\left(\int_{\tau_n}^{\tau_n+\delta_n}\int_0^{1/{ \eps_n}}\int_{|u|\geq { \eps_n}}
                     \mathbf 1_{\{rZ^{{ \eps_n},m}_{s-}\leq 1\}}
                     ( Z^{{ \eps_n},m}_{s-}\wedge m)(\ee^u-1)
                     (\mathcal N-\mathcal N')(\dd s,\dd r,\dd u) \right)^2\\
   &\quad= E\left(  \int_{\tau_n}^{\tau_n+\delta_n}\int_0^{1/{ \eps_n}}\int_{|u|\geq { \eps_n}}
                     \mathbf 1_{\{r Z^{{ \eps_n},m}_s\leq 1\}}
                     ( Z^{{ \eps_n},m}_s\wedge m)^2(\ee^u-1)^2 \,\dd s\,\dd r\,\Pi(\dd u) \right).
 \end{align*}
Hence,
 \begin{align*}
  E\left( \vert  Z^{{ \eps_n},m}_{\tau_n+\delta_n} -  Z^{{ \eps_n},m}_{\tau_n}\vert^2 \right)
   \leq 4\big(\log E\big(\ee^{\xi_1};\zeta>1\big)\big)^2\delta_n^2
    + 4m\delta_n\left( \sigma^2 + q+\int_{|u|\geq { \eps_n}} (\ee^u-1)^2\,\Pi(\dd u) \right),
 \end{align*}
  which tends to zero as $n$ tends to infinity. This proves (ii) and finishes the proof of the lemma.
\end{proof}

Having proved the tightness we identify the limit points as weak solutions to

 \begin{align}\label{eqn:o3}
 \begin{split}
    Z_t&=z+\big({\log E\big(\ee^{\xi_1};\zeta>1\big)}\big)t+ \sigma \int_0^{t} \sqrt{(Z_s\wedge m)} \dd B_s\\
       &\phantom{=\,}    -\int_0^t\int_0^\infty (Z_{s-}\wedge m)(\mathcal M-\mathcal M')(\dd s,\dd r)\\
	   &\phantom{=\,}     +\int_0^t\int_0^\infty\int_{\R}\mathbf 1_{\{rZ_{s-}\leq 1\}}
                             (Z_{s-}\wedge m)(\ee^u-1) (\mathcal N-\mathcal N')(\dd s,\dd r,\dd u).
  \end{split}
 \end{align}
This is achieved via martingale problem characterizations of jump type SDEs.

\begin{lemma}\label{lem:weakexistence}
{Let $m>0$ and $Z^m$ be} any limiting point of the tight family
 ${\{Z^{\eps_k,m} : k\in\N\}}$ {(given in Lemma \ref{lem:tightness})} as ${\eps_k\downarrow 0}$.
Then $Z^m$ is a weak solution to (\ref{eqn:o3}) {for $t\geq 0$}.
 \end{lemma}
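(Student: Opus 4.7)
The plan is to identify the limit $Z^m$ via a martingale problem and then to invoke a standard representation theorem for semimartingales with absolutely continuous characteristics (in the spirit of Ikeda and Watanabe \cite{IW}, as used by Fu and Li \cite{FL}) to realise $Z^m$ as a weak solution to \eqref{eqn:o3} on a possibly enlarged probability space. By Lemma \ref{lem:tightness} and Prokhorov's theorem, I can extract a subsequence $(Z^{\eps_{k_j},m})_{j\in\N}$ converging in law in the Skorokhod topology to $Z^m$, and by Skorokhod's representation theorem I may assume the convergence holds almost surely in $\D$ on a common probability space.

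For any $f\in C_b^2(\R)$, It\={o}'s formula applied to the SDE \eqref{eqn:o2} shows that
\begin{align*}
M^{f,k}_t := f(Z^{\eps_k,m}_t)-f(z)-\int_0^t \cL^{\eps_k,m} f(Z^{\eps_k,m}_s)\,\dd s
\end{align*}
is a martingale, where $\cL^{\eps,m}f(x)$ is the sum of the drift term $\log E(\ee^{\xi_1};\zeta>1)\,f'(x)$, the Brownian term $\tfrac{\sigma^2}{2}(x\wedge m)\,f''(x)$, and the two compensated jump expressions obtained by Taylor expanding $f$ at $x$ against the Poisson random measures $\mathcal M$ and $\mathcal N$, truncated at $r=1/\eps$ and $|u|\geq \eps$. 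I would first show that $\cL^{\eps_k,m}f(x)\to \cL^m f(x)$ bounded-pointwise as $k\to\infty$, where $\cL^m$ is the analogous operator with these two truncations removed, and then combine this with the almost-sure convergence $Z^{\eps_k,m}\to Z^m$ and the uniform $L^2$-bound on $Z^{\eps_k,m}_t$ established inside the proof of Lemma \ref{lem:tightness} (see \eqref{help17}) to pass to the limit in the martingale identity by dominated convergence. Consequently $M^f_t:=f(Z^m_t)-f(z)-\int_0^t \cL^m f(Z^m_s)\,\dd s$ is a martingale, i.e.\ $Z^m$ solves the $\cL^m$-martingale problem.

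The main obstacle is the bound-pointwise convergence of the two jump generators, and two distinct error terms must be handled. The large-$r$ tail $r\in(1/\eps_k,\infty)$ contributes only when $x<\eps_k$, since the indicator $\mathbf 1_{\{rx\leq 1\}}$ forces $r\leq 1/x$; but then $(x\wedge m)=x<\eps_k$, so a second-order Taylor estimate bounds this tail by $\|f''\|_\infty\, x\int_\R(\ee^u-1)^2\,\Pi(\dd u)\leq \|f''\|_\infty\,\eps_k \int_\R(\ee^u-1)^2\,\Pi(\dd u)$, which vanishes as $\eps_k\downarrow 0$ by finiteness of $\int(\ee^u-1)^2\Pi(\dd u)$ (guaranteed by $E(\ee^{2\xi_1})<\infty$ for spectrally negative $\xi$, see Remark \ref{remark_cond}). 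The small-$u$ piece $|u|<\eps_k$ is controlled by the Taylor remainder bound $\|f''\|_\infty\,x^{-1}(x\wedge m)^2\int_{|u|<\eps_k}(\ee^u-1)^2\,\Pi(\dd u)\leq \|f''\|_\infty\, m\int_{|u|<\eps_k}(\ee^u-1)^2\,\Pi(\dd u)$, which also tends to zero. An analogous but simpler computation handles the killing term arising from $\mathcal M$.

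Once $Z^m$ solves the $\cL^m$-martingale problem with the integrability inherited from Lemma \ref{lem:tightness}, the final step is routine: identifying the characteristics of the semimartingale $Z^m$ with those prescribed by \eqref{eqn:o3} and applying a representation theorem as in Ikeda and Watanabe \cite{IW} and Fu and Li \cite{FL} produces, on a possibly enlarged probability space, a standard Brownian motion $B$ and independent Poisson random measures $\mathcal M$ and $\mathcal N$ with the stated intensity measures such that $Z^m$ satisfies the SDE \eqref{eqn:o3}. This completes the proof.
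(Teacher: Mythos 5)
Your proposal is correct and follows essentially the same route as the paper: Skorokhod representation, passage to the limit in the martingale problem with explicit control of the two truncation errors (large $r$ via the indicator forcing $x<\eps_k$, small $u$ via the second-order Taylor bound and $\int(\ee^u-1)^2\Pi(\dd u)<\infty$), and then Fu and Li's representation result to turn the martingale-problem solution into a weak solution of \eqref{eqn:o3}. The only (harmless) variation is that you transfer the martingale property to the limit by dominated convergence in the martingale identity, whereas the paper invokes Corollary IX.1.19 and Proposition I.1.47 of Jacod and Shiryaev.
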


\begin{proof}
First note that, due to It\=o's formula, $ Z^{{\eps_k},m}$ solves the following martingale problem:
 \begin{align*}
	M^{{\eps_k},m}_t:=f( Z^{{\eps_k},m}_t)-f(z)-\int_0^t (\cA^{{\eps_k},m} f)( Z^{{\eps_k},m}_s)\,\dd s,
                   \quad t\geq 0,
 \end{align*}
 is a martingale for all $f\in C_b^2(\R_+)$, where
 \begin{align*}
  (\cA^{{\eps_k},m}f)(x)&:=\frac{1}{2}\sigma^2(x\wedge m)f''(x)+\big(\log E\big(\ee^{\xi_1};\zeta>1\big)\big)f'(x)\\
      	   	        & \quad {+ q\int_0^{1/{\eps_k}}
                           \Big(f\big(x-\mathbf 1_{\{rx\leq 1\}}(x\wedge m)\big)-f(x)
                            +f'(x)\mathbf 1_{\{rx\leq 1\}}(x\wedge m)\Big)
                                 \,\dd r}\\
      	   	        & \quad + \int_0^{1/{\eps_k}} \int_{|u|\geq {\eps_k}}
                           \Big(f\big(x+\mathbf 1_{\{rx\leq 1\}}(x\wedge m)(\ee^u-1)\big)-f(x)\\
   &\phantom{\quad + \int_0^{1/{\eps_k}} \int_{|u|\geq {\eps_k}} \Big(}
                            -f'(x)\mathbf 1_{\{rx\leq 1\}}(x\wedge m)(\ee^u-1)\Big)
                                 \,\dd r\,\Pi(\dd u),\quad x\geq 0,
 \end{align*}
 and $C_b^2(\R_+)$ denotes the set of bounded twice continuously differentiable functions
 $f:\R_+\to\R$ of which the first and second derivatives are also bounded.
{Without loss of generality we can assume that $(\eps_k)_{k\in\N}$} realizes the weak convergence of
 $Z^{\eps_k,m}$ to $Z^m$ in $\D$ as $k\to\infty$.
By Skorokhod's representation theorem and Riesz's theorem we may assume that $ Z^{\eps_k,m}$
 converges to $Z^{m}$ almost surely in $\D$ (possibly on a different probability space and
 changing also the subsequence $(\eps_k)_{k\in\N}$).
By Ethier and Kurtz \cite[page 118]{EK}, this yields that $P(\bar\Omega)=1$, where
 \[
    \bar\Omega:=\left\{\omega\in\Omega : \lim_{k\to\infty} Z^{\eps_k,m}_t(\omega) = Z^m_t(\omega)
                             \quad \text{for $t\geq 0$ at which $(Z^m_u(\omega))_{u\geq 0}$
                                         is continuous}\right\}.
 \]

Next we show that for all $f\in C_b^2(\R_+)$,
 \begin{align}\label{mm}
		M^{m}_t&:=f( Z^{m}_t)-f(z)-\int_0^t (\cA^{m} f)( Z^{m}_s)\,\dd s,\quad t\geq 0,
 \end{align}
 is a martingale with respect to its natural filtration,
 where
 \begin{align*}
  (\cA^{m}f)(x)&:=\frac{1}{2}\sigma^2(x\wedge m)f''(x)+\big(\log E\big(\ee^{\xi_1};\zeta>1\big)\big)f'(x)\\
		     &\quad+q{ \int_0^\infty\Big(f(x-\mathbf 1_{\{rx\leq 1\}}
                    (x\wedge m))-f(x)
                 +f'(x)\mathbf 1_{\{rx\leq 1\}}(x\wedge m)\Big)
                     \,\dd r}\\
		     &\quad+\int_0^\infty\int_{\R}\Big(f(x+\mathbf 1_{\{rx\leq 1\}}
                    (x\wedge m)(\ee^u-1))-f(x)\\
             &\phantom{\quad+\int_0^\infty\int_{\R}\Big(}
                 -f'(x)\mathbf 1_{\{rx\leq 1\}}(x\wedge m)(\ee^u-1)\Big)
                     \,\dd r\,\Pi(\dd u),
                     \quad x\geq 0.
 \end{align*}		
In what follows let $\omega\in\bar\Omega$ be fixed.
First, using the dominated convergence theorem, we show that
 \begin{align}\label{help18}
   \lim_{k\to\infty} \int_0^t (\cA^{\eps_k,m}f)( Z^{\eps_k,m}_s)(\omega)\,\dd s
        = \int_0^t (\cA^{m}f)( Z^{m}_s)(\omega)\,\dd s,
        \qquad t\geq 0.
 \end{align}
Let us introduce the notation
 \[
   {D_m}(\omega):=\big\{ t\geq 0 : \text{$(Z^m_u(\omega))_{u\geq 0}$ is continuous at $t$} \big\},
     \quad {m>0.}
 \]
Then $\lim_{k\to\infty} Z^{\eps_k,m}_t(\omega) = Z^m_t(\omega)$ for all $t\in {D_m}(\omega)$, and,
 by Ethier and Kurtz \cite[page 114]{EK}, $[0,\infty)\setminus {D_m}(\omega)$ is at most countable.
Using the boundedness and continuity of $f$, $f'$ and $f''$, we check that for the integrands
 of the left-hand side of \eqref{help18}
 \begin{align}\label{he}
     \lim_{k\to\infty}(\cA^{\eps_k,m}f)( Z^{\eps_k,m}_s )(\omega)=(\cA^{m}f)( Z^m_s)(\omega)
      \qquad
       \text{for all $s\in {D_m}(\omega)$.}
 \end{align}
Since $s\in {D_m}(\omega)$ and $f'$, $f''$ are continuous, the first {two} summands in
 $(\mathcal A^{\eps_k,m}f)(Z^{\eps_k,m}_s)(\omega)$ trivially converge as $k\to\infty$
 {to that of $(\mathcal A^mf)(Z^m_s)(\omega)$}
 so that it suffices to show the convergence of the integrals in
 $(\mathcal A^{\eps_k,m}f)(Z^{\eps_k,m}_s)(\omega)$. {In the following we show the convergence for the second integral, the arguments for the integral corresponding to killing are similar (replacing $(\ee^u-1)^2$ by $1$ and skipping
 the integration with respect to $u$).}\\
On the one hand, for all ${m>0}$ and $s\in {D_m}(\omega)$, the integrand (as a function of $r$ and $u$)
 \begin{align*}
  \mathbf 1_{\{r\leq 1/\eps_k\}} \mathbf 1_{\{\vert u\vert\geq \eps_k\}}
                   & f\big(Z^{\eps_k,m}_s(\omega)+\mathbf 1_{\{rZ^{\eps_k,m}_s(\omega)\leq 1\}}
                    (Z^{\eps_k,m}_s(\omega)\wedge m)(\ee^u-1)\big)-f(Z^{\eps_k,m}_s(\omega))\\
                   & - f'(Z^{\eps_k,m}_s(\omega))\mathbf 1_{\{rZ^{\eps_k,m}_s(\omega)\leq 1\}}
                         (Z^{\eps_k,m}_s(\omega)\wedge m)(\ee^u-1)
 \end{align*}
 converges, as $k\to\infty$, pointwise to
 \begin{align*}
   &f(Z^m_s(\omega)+\mathbf 1_{\{rZ^m_s(\omega)\leq 1\}}
      (Z^m_s(\omega)\wedge m)(\ee^u-1))-f(Z^m_s(\omega))\\
   &\qquad\qquad  - f'(Z^m_s(\omega))\mathbf 1_{\{rZ^m_s(\omega)\leq 1\}}(Z^m_s(\omega)\wedge m)(\ee^u-1)
 \end{align*}
 possibly except at the point $r=Z^m_s(\omega)$ (it can only occur if $Z^m_s(\omega)>0$).
On the other hand, by Taylor expansion of second order, we can derive the upper bound
\begin{align*}
 \begin{split}
  &\sup_{k\in\N}
  \mathbf 1_{\{r\leq 1/\eps_k\}}\mathbf 1_{\{\vert u\vert\geq \eps_k\}}
       \Big\vert f( Z^{\eps_k,m}_s(\omega)+\mathbf 1_{\{r Z^{\eps_k,m}_s(\omega)\leq 1\}}( Z^{\eps_k,m}_s(\omega)\wedge m)(\ee^u-1))
                             -f( Z^{\eps_k,m}_s(\omega))\\
  & \phantom{\sup_{k\in\N}\mathbf 1_{\{r\leq 1/\eps_k\}}\mathbf 1_{\{\vert u\vert\geq \eps_k\}}}
    - f'( Z^{\eps_k,m}_s(\omega))\mathbf 1_{\{r Z^{\eps_k,m}_s(\omega)\leq 1\}}
          ( Z^{\eps_k,m}_s(\omega)\wedge m)(\ee^u-1)
    \Big\vert\\
  &\quad \leq \frac{1}{2}\sup_{x\in\R_+}\vert f''(x)\vert
         \sup_{k\in\N} \mathbf 1_{\{r Z^{\eps_k,m}_s(\omega)\leq 1\}}
           ( Z^{\eps_k,m}_s(\omega)\wedge m)^2 (\ee^u-1)^2.
\end{split}
 \end{align*}
The upper bound is integrable with respect to $\dd r\otimes \Pi(\dd u)$, since an easy calculation shows that,
 for all ${m>0}$ and $s\in {D_m}(\omega)$,
 \[
     \mathbf 1_{\{r Z^{\eps_k,m}_s(\omega)\leq 1\}}( Z^{\eps_k,m}_s(\omega)\wedge m)^2
       \leq \left(\frac{1}{r}\wedge m\right)^2,
      \qquad k\in\N,
 \]
 and hence the integral in $(\mathcal A^{\eps_k,m}f)(Z^{\eps_k,m}_s)(\omega)$ is bounded above by
  \begin{align*}
   \begin{split}
    \frac{1}{2}\sup_{x\in\R_+}&\vert f''(x)| \int_0^\infty \left(\frac{1}{r}\wedge m\right)^2 \,\dd r
                \int_\R(\ee^u-1)^2\,\Pi(\dd u)\\
     & {=} \frac{1}{2}\sup_{x\in\R_+}\vert f''(x)|(2m)\int_\R(\ee^u-1)^2\,\Pi(\dd u).
  \end{split}
 \end{align*}
Using dominated convergence theorem we have (\ref{he}).
Next, using again dominated convergence theorem, we check \eqref{help18}.
By \eqref{he}, using also that $[0,\infty)\setminus {D_m}(\omega)$ is at most countable,
 for all $t\geq 0$,
 \[
    \lim_{k\to\infty}(\cA^{\eps_k,m}f)( Z^{\eps_k,m}_s )(\omega)=(\cA^{m}f)( Z^m_s)(\omega)
      \qquad
       \text{Lebesgue a.e. $s\in [0,t]$.}
 \]
Further, for all $s\in[0,t]$,
 \begin{align}\label{ka}\begin{split}
   \sup_{k\in\N} (\cA^{\eps_k,m}f)( Z^{\eps_k,m}_s )(\omega)
      &\leq \frac{{\sigma^2}m}{2}\sup_{x\in\R_+}\vert f''(x)\vert
           + \big(\log E(\ee^{\xi_1};\zeta>1)\big)\sup_{x\in\R_+}\vert f'(x)\vert\\
      &\quad +qm \sup_{x\in\R_+}\vert f''(x)\vert
              + m \sup_{x\in\R_+}\vert f''(x)\vert \int_\R(\ee^u-1)^2\,\Pi(\dd u),\end{split}
 \end{align}
 which is trivially integrable over $[0,t]$ with respect to $s$.
Hence \eqref{help18} follows by dominated convergence theorem.
Note also that, since $P(\bar\Omega)=1$ and almost sure convergence implies weak convergence,
 we have
 \begin{align}\label{help24}
   \left( \int_0^t (\cA^{\eps_k,m}f)( Z^{\eps_k,m}_s )\,\dd s\right)_{t\geq 0 }
    \quad \text{converges weakly to} \quad
   \left( \int_0^t (\cA^{m}f)( Z^{m}_s )\,\dd s\right)_{t\geq 0 }
 \end{align}
 in $\D$ as $k\to\infty$.

The martingale property of $M^m$ with respect to its own filtration can be checked as follows.
First, by Jacod and Shiryaev \cite[Corollary IX.1.19]{JS}, we show that $(M^m_t)_{t\geq 0}$ is a local martingale.
To check the conditions of this corollary we have to show that $M^{\eps_k,m}$ converges weakly
 in $\D$ as $k\to\infty$ to $M^m$ and that there is some $b\geq 0$ such that
 $\vert M^{\eps_k,m}_t - M^{\eps_k,m}_{t-}\vert\leq b$ for all $t>0$, $k,m\in\N$, almost surely.
Using that $Z^{\eps_k,m}$ converges weakly in $\D$ to $Z^m$ as $k\to\infty$ and $f$ is continuous
 and bounded, we have $f(Z^{\eps_k,m})$ converges weakly in $\D$ to $f(Z^m)$ as $k\to\infty$.
Since the limit process in \eqref{help24} is continuous, by Jacod and Shiryaev
 \cite[Proposition VI.1.23]{JS}, we obtain that $M^{\eps_k,m}$ converges weakly in $\D$ as $k\to\infty$
 to $M^m$.
Further, almost surely for all $t\geq 0$,
 \begin{align*}
   \vert M^{\eps_k,m}_t - M^{\eps_k,m}_{t-}\vert
      = \vert f(Z^{\eps_k,m}_t) - f(Z^{\eps_k,m}_{t-})\vert
      \leq 2 \sup_{x\in\R_+}\vert f(x)\vert<\infty.
 \end{align*}
The martingale property of the local martingale $M^m$ follows by Jacod and Shiryaev \cite[Proposition I.1.47]{JS},
 since, by the very same arguments as before, for all $T>0$,
 \begin{align*}
  E\left(\sup_{t\in[0,T]}\vert M_t^m\vert\right)
      &\leq 2\sup_{x\in\R_+}\vert f(x)\vert
           + T\Bigg(\frac{{\sigma^2}m}{2}\sup_{x\in\R_+}\vert f''(x)\vert
           + \big(\log E(\ee^{\xi_1};\zeta>1)\big)\sup_{x\in\R_+}\vert f'(x)\vert\\
      &\quad +qm \sup_{x\in\R_+}\vert f''(x)\vert
              + m \sup_{x\in\R_+}\vert f''(x)\vert \int_\R(\ee^u-1)^2\,\Pi(\dd u)\Bigg)
       <\infty.
 \end{align*}

To finish the proof, we can resort to Proposition 4.2 of Fu and Li \cite{FL} (see also Theorem II.2.42 of Jacod and Shiryaev \cite{JS}). Since $ Z^m$ is a solution
 to the martingale problem (\ref{mm}), it is then a weak solution
 of the SDE \eqref{eqn:o3}.
\end{proof}

The last step for the existence proof is to get rid of the additional truncation by $m$ which
 was only included to validate Aldous's criterion in Lemma \ref{lem:tightness}.
We start with a simple observation.

\begin{lemma}\label{lemma_pathwise_uniqueness}
Under the conditions of Theorem \ref{thm:1} pathwise uniqueness {for non-negative solutions} holds for the truncated
 SDE (\ref{eqn:o3}) {for $t\geq 0$.}
\end{lemma}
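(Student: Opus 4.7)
The plan is to copy the Yamada--Watanabe / Tanaka type argument of Proposition \ref{prop:uniqueness} almost verbatim; the truncation by $m$ only simplifies matters, since every jump and diffusion coefficient is now bounded by $m$ (times a factor $|\ee^u-1|$ in the $\mathcal N$-integral). I would take two non-negative weak solutions $X$, $Y$ of \eqref{eqn:o3} on the same stochastic basis, driven by the same Brownian motion and the same Poisson random measures $\mathcal M$ and $\mathcal N$, with $X_0=Y_0=z$, subtract the equations (the constant drift cancels), and apply It\=o's formula to $\phi_k(X_t - Y_t)$ with the same approximating sequence $(\phi_k)_{k\in\N}$ as in Proposition \ref{prop:uniqueness}.

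Because all integrands are now bounded in absolute value by $m$ in the $\mathcal M$-integral and by $m\,|\ee^u-1|$ in the $\mathcal N$-integral, the three compensated martingale parts produced by It\=o's formula are genuine square-integrable martingales on $[0,\infty)$; no localising sequence $\tau_m$ is needed, and taking expectations directly eliminates them. The Brownian contribution is controlled via \eqref{b1} together with the $1$-Lipschitz bound
\[
\bigl(\sqrt{X_s\wedge m}-\sqrt{Y_s\wedge m}\bigr)^2 \leq \bigl|(X_s\wedge m)-(Y_s\wedge m)\bigr|\leq |X_s-Y_s|,
\]
which yields a term of order $t/k$ that vanishes as $k\to\infty$. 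For the two compensator integrals associated with $\mathcal N$ and $\mathcal M$, the key observation is that the algebraic identity \eqref{bb} (and its analogue for $\mathbf h$) remains valid after replacing $x$ and $y$ by $x\wedge m$ and $y\wedge m$ inside the jump amplitudes: the piecewise split of the $r$-integral into $(0,1/x)$, $(1/x,1/y)$ (and the symmetric cases for $x<y$) is still determined by the untruncated indicators $\mathbf 1_{\{rx\le 1\}}$ appearing in \eqref{eqn:o3}, and spectral negativity ($u\le 0$) together with $0\le (x\wedge m)-(y\wedge m)\le x-y$ for $x\ge y$ again forces each of the four integrands to be identically zero.

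Dominated convergence, along the same lines as in the derivation of \eqref{l1}, \eqref{l2}, \eqref{l1b}, \eqref{l2b}, then allows one to pass to the limit $k\to\infty$ in the expectation of $\phi_k(X_t - Y_t)$; the upper bounds $\mathbf 1_{\{u<-1\}} + \mathbf 1_{\{-1\le u\le 0\}}\ee^{-u}(\ee^u-1)^2$ and $(\ee^u-1)^2$ used there remain valid (in fact uniform in $m$), since the truncation only shrinks the integrands. Fatou's lemma combined with \eqref{b0} yields $E|X_t-Y_t|=0$ for every $t\ge 0$, and right-continuity of paths upgrades this to indistinguishability.

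The main---and essentially only---subtlety is verifying that \eqref{bb} truly carries over to the truncated coefficients: on each of the four $r$-subintervals one has to check that the jump of $X-Y$ preserves the sign of $X-Y$ so that $|\,\cdot\,|$ unfolds without a sign change, which is precisely where spectral negativity enters, now combined with the elementary monotonicity and $1$-Lipschitz property of $z\mapsto z\wedge m$. Beyond this direct transcription, no new idea is required, and I do not anticipate any further obstacle.
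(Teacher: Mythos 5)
Your proposal is correct and matches the paper's approach: the paper's own proof of this lemma is a one-line remark that the argument of Proposition \ref{prop:uniqueness} carries over to the truncated SDE \eqref{eqn:o3}, which is exactly the transcription you carry out. You correctly isolate the one point that genuinely needs checking, namely that the sign-preservation identity \eqref{bb} survives when the jump amplitudes $x$, $y$ are replaced by $x\wedge m$, $y\wedge m$ while the indicators $\mathbf 1_{\{rx\leq 1\}}$ remain untruncated (which holds since $0\leq (x\wedge m)-(y\wedge m)\leq x-y$ for $x\geq y$ and $u\leq 0$), and your remaining estimates are valid.
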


\begin{proof}
This follows along the lines of the proof of Proposition \ref{prop:uniqueness}
 for the modified SDE (\ref{eqn:o3}).
\end{proof}

Combining Lemma \ref{lemma_pathwise_uniqueness} with Lemma \ref{lem:weakexistence}, we can construct
 a sequence $(Z^m)_{m\in \N}$ of strong solutions to the jump type SDEs (\ref{eqn:o3})
 on the same probability space.
Indeed, under pathwise uniqueness weak solutions are automatically strong solutions, see,
 e.g. Situ \cite[page 104]{Sit}, and they can be defined on the same probability space,
 since, by the adaptedness, strong solutions are deterministic functions of the driving noises
 $B$, {$\mathcal M$} and $\cN$.
Let us define by
\begin{align*}
	\tau_m:=\inf\{ t\geq 0 : Z^m_t\geq m\}
\end{align*}
the sequence of exceeding times of level $m$.

\begin{lemma}\label{lem:tau}
	The sequence $(\tau_m)_{m\in\N}$ is increasing and tends to infinity almost surely.
\end{lemma}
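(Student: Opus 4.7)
The plan is to establish the two claims separately: first the monotonicity $\tau_m\leq\tau_{m+1}$, and second the a.s.\ divergence $\tau_m\nearrow\infty$.

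For monotonicity I would introduce $\tilde\tau_m:=\inf\{t\geq 0:Z^{m+1}_t\geq m\}$ and exploit the fact that on $[0,\tilde\tau_m)$ the inequality $Z^{m+1}_{s-}<m$ gives $(Z^{m+1}_{s-}\wedge(m+1))=Z^{m+1}_{s-}=(Z^{m+1}_{s-}\wedge m)$, so that $Z^{m+1}$ satisfies the SDE \eqref{eqn:o3} at truncation level $m$ on $[0,\tilde\tau_m)$. Running the Yamada--Watanabe argument of Lemma \ref{lemma_pathwise_uniqueness} with $\tau_m\wedge\tilde\tau_m$ in place of $\tau_m$ should yield
\[
   E\bigl(|Z^m_{t\wedge\tau_m\wedge\tilde\tau_m}-Z^{m+1}_{t\wedge\tau_m\wedge\tilde\tau_m}|\bigr)=0,\qquad t\geq 0,
\]
so that $Z^m$ and $Z^{m+1}$ coincide on $[0,\tau_m\wedge\tilde\tau_m)$. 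At the stopping time itself both left limits are strictly below $m$, so the truncations at levels $m$ and $m+1$ in \eqref{eqn:o3} produce the same jump, and the identity propagates to $[0,\tau_m\wedge\tilde\tau_m]$. One then reads off $\tau_m=\tilde\tau_m$, and because $Z^{m+1}_t<m<m+1$ on $[0,\tilde\tau_m)$, it follows that $\tau_{m+1}\geq\tilde\tau_m=\tau_m$.

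For divergence I would bound $E(Z^m_{t\wedge\tau_m})$ and apply Markov's inequality. On $[0,t\wedge\tau_m]$ the three martingale terms in \eqref{eqn:o3} are true martingales: the Brownian integrand is bounded by $\sigma\sqrt m$, and using $\int_0^\infty\mathbf 1_{\{rx\leq 1\}}\,\dd r=1/x$ together with $(x\wedge m)^2/x\leq m$ for $x>0$, the predictable quadratic variations of the compensated $\cM$- and $\cN$-integrals are bounded by $qmt$ and by $mt\int_\R(\ee^u-1)^2\,\Pi(\dd u)<\infty$ respectively (the last integral being finite thanks to $E(\ee^{2\xi_1})<\infty$ for spectrally negative $\xi$, see Remark \ref{remark_cond}). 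Taking expectations therefore yields, with $C:=\log E(\ee^{\xi_1};\zeta>1)>0$,
\[
   E\bigl(Z^m_{t\wedge\tau_m}\bigr)=z+C\,E(t\wedge\tau_m)\leq z+Ct.
\]
Since $Z^m_{\tau_m}\geq m$ on $\{\tau_m\leq t\}$ and $Z^m\geq 0$, Markov's inequality then gives
\[
   mP(\tau_m\leq t)\leq E\bigl(Z^m_{t\wedge\tau_m}\mathbf 1_{\{\tau_m\leq t\}}\bigr)\leq z+Ct,
\]
and letting $m\to\infty$ gives $P(\lim_m\tau_m\leq t)=0$ for every $t\geq 0$, which is the desired divergence.

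The main obstacle is the localization of pathwise uniqueness in the monotonicity step: Lemma \ref{lemma_pathwise_uniqueness} is stated globally, so the Yamada--Watanabe estimates of Proposition \ref{prop:uniqueness} must be re-run with the random interval $[0,\tau_m\wedge\tilde\tau_m]$, and one has to verify carefully that the identity of $Z^m$ and $Z^{m+1}$ carries through the possible jump at this stopping time (which is clean here because both left limits lie strictly below $m$, so the two SDEs specify the very same jump). The divergence step, by contrast, is a straightforward first-moment calculation parallel to the second-moment bounds already used in Lemma \ref{lem:tightness}.
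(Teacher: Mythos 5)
Your proof is correct. The monotonicity half follows essentially the paper's route — identify $Z^m$ and $Z^{m+1}$ up to the first exceeding of level $m$ via (localized) pathwise uniqueness and conclude $\tau_m\leq\tau_{m+1}$ — though you are more careful than the paper in two respects: you introduce $\tilde\tau_m$ and work on $[0,\tau_m\wedge\tilde\tau_m]$ rather than tacitly assuming the ordering of the exceeding times, and you explicitly check that the possible jump at the stopping time is the same for both truncation levels (your phrase ``strictly below $m$'' should read ``at most $m$'', since the level can be reached continuously, but the argument is unchanged because $\ell\wedge m=\ell\wedge(m+1)=\ell$ for $\ell\leq m$). The divergence half is where you genuinely depart from the paper: the paper invokes Proposition 2.3 of Fu and Li, after verifying their conditions (2.a) and (2.b), to get the bound $(1+m)P(\tau_m<t)\leq(1+z)(E(\ee^{\xi_1}))^t$, and it reduces to $q=0$ at the outset; you instead give a self-contained first-moment estimate $E(Z^m_{t\wedge\tau_m})\leq z+Ct$ (the truncation by $m$ in \eqref{eqn:o3} makes all three stochastic integrals honest $L^2$-martingales, so optional stopping applies) followed by Markov's inequality, handling $q>0$ on the same footing. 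Both yield $P(\tau_m\leq t)=O(1/m)$ and hence the a.s.\ divergence; your version has the merit of not outsourcing the key estimate, at the cost of redoing a computation that is anyway close to the second-moment bounds of Lemma \ref{lem:tightness}.
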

\begin{proof}
{As in Lemma \ref{lll} we can assume $q=0$ as the killing integral does not influence the explosion in finite time. } As long as $Z_t^m\leq m$ taking minimum with $m$ is superfluous so that $Z^m$ satisfies
 the SDE (\ref{eqn}) up to time $\tau_m$.
Hence, by Proposition \ref{prop:uniqueness}, for all $m\leq m'$, $m,m'\in\N$,
 two solutions $Z^m$ and $Z^{m'}$ are 	almost surely identical until $\tau_m$.
This implies that $\tau_m\leq \tau_{m'}$ almost surely for all $m\leq m'$ yielding that
 the limit $\lim_{m\to\infty}\tau_m$ exists almost surely.

To show that $\tau_m$ indeed converges to infinity we use an estimate of Fu and Li \cite{FL}.
By the proof of their Proposition 2.3 we want to use the upper bound
 \begin{align}\label{help19}
	   (1+m)P(\tau_m<t) \leq (1+z)\exp\left( (\log E\big(\ee^{\xi_1}\big))t\right)
                              = (1+z)(E(\ee^{\xi_1}))^t,
	    \qquad t\geq 0.
 \end{align}
To check that we are allowed to work with \eqref{help19}, conditions (2.a) and (2.b) of
 Fu and Li \cite{FL} need to be verified.
This can be done almost identical to the proof of Lemma \ref{lll} modifying the definitions
 of the functions $\sigma$ and $g_0$ given there to $\sigma\sqrt{x\wedge m}$, $x\geq 0$, and to
 $\mathbf 1_{\{xr\leq 1\}}(x\wedge m)(\ee^u-1)$, $x\in\R$, $r\geq 0$, $u\in\R$, respectively.

Let us finally check that \eqref{help19} implies $P(\lim_{m\to\infty}\tau_m = \infty)=1$.
Suppose that $\tau:=\lim_{m\to\infty}\tau_m$ (this limit exists with probability one since the sequence
 $(\tau_m)_{m\in\N}$ is monotone increasing), then, by \eqref{help19}, for all continuity points {$t\geq 0$}
 of the distribution function of $\tau$, we get
 \[
     P(\tau< t) \leq \liminf_{m\to\infty} \frac{(1+z)(E(\ee^{\xi_1}))^t}{1+m}
                          =0.
 \]
Since the distribution function of $\tau$ has only at most countably many discontinuity points,
 this yields that $P(\tau=\infty)=1$.
 \end{proof}

We can now combine the lemmas to prove strong existence of solutions to the SDE (\ref{eqn}).

\begin{proposition}\label{P:existence}
Under the conditions of Theorem \ref{thm:1} there is a strong solution to the SDE (\ref{eqn}).
\end{proposition}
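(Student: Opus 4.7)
The plan is to glue together the truncated strong solutions $(Z^m)_{m\in\N}$ coming from Lemmas \ref{lem:weakexistence} and \ref{lemma_pathwise_uniqueness} into a single process satisfying the full SDE \eqref{eqn}. First I would realize all $Z^m$ on one and the same stochastic basis driven by a common Brownian motion $B$ and Poisson random measures $\mathcal M$ and $\mathcal N$: by the Yamada--Watanabe principle (e.g.\ Situ \cite{Sit}), the pathwise uniqueness from Lemma \ref{lemma_pathwise_uniqueness} upgrades the weak solutions of Lemma \ref{lem:weakexistence} to strong solutions, which are then measurable functionals of the driving noises and hence can be constructed simultaneously for every $m\in\N$.

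Next I would use pathwise uniqueness to obtain a consistency property: for $m\le m'$, on the interval $[0,\tau_m]$ the level truncation is inactive (since $Z^m_t\le m$ and $Z^{m'}_t\le m$ there), so both $Z^m$ and $Z^{m'}$ solve the same truncated SDE \eqref{eqn:o3} at level $m'$ up to time $\tau_m$; pathwise uniqueness for \eqref{eqn:o3} then forces $Z^m_t=Z^{m'}_t$ for $t\in[0,\tau_m]$ almost surely. This justifies setting
\begin{equation*}
 Z_t:=Z^m_t \quad\text{for } t\in[0,\tau_m],\; m\in\N,
\end{equation*}
which is unambiguous on the event $\{\tau_m\to\infty\}$, and by Lemma \ref{lem:tau} this event has probability one. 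Thus $Z$ is a well-defined, c\`adl\`ag, non-negative, $(\cG_t)$-adapted process on $[0,\infty)$, and its adaptedness to the augmented filtration generated by $B$, $\mathcal M$ and $\cN$ is inherited from the strong-solution property of each $Z^m$.

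Finally I would verify that $Z$ indeed satisfies \eqref{eqn}. On $[0,\tau_m]$ the truncation $x\mapsto x\wedge m$ in \eqref{eqn:o3} coincides with the identity on the range of $Z$, so substituting into the SDE for $Z^m$ shows that $Z$ solves \eqref{eqn} on $[0,\tau_m]$. Since this holds for every $m$ and $\tau_m\uparrow\infty$ almost surely, $Z$ satisfies \eqref{eqn} for all $t\ge 0$ almost surely; here one only needs to note that the stochastic integrals against $B$, $\mathcal M-\mathcal M'$ and $\mathcal N-\mathcal N'$ are well-defined on the full time axis because their localised versions up to $\tau_m$ are, and the integrands are the same on $[0,\tau_m]$ as those in \eqref{eqn}.

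The only delicate point I anticipate is the passage from the truncated equations to \eqref{eqn} itself, specifically making sure that the compensated Poisson integrals on $\{rZ_{s-}\le 1\}$ (which have unbounded intensity as $Z\downarrow 0$) can genuinely be assembled from the truncated ones without spurious contributions; this, however, is handled by the localisation at $\tau_m$ together with Lemma \ref{lem:tau}, so no further analytic estimate is required beyond what is already contained in the previous lemmas.
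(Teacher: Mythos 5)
Your proposal is correct and follows essentially the same route as the paper: upgrade the truncated weak solutions to strong ones via pathwise uniqueness and the Yamada--Watanabe principle, realize them on a common stochastic basis, establish consistency of $Z^m$ and $Z^{m'}$ on $[0,\tau_m]$, and glue using Lemma \ref{lem:tau}. The only cosmetic difference is that the paper invokes pathwise uniqueness for \eqref{eqn} itself (Proposition \ref{prop:uniqueness}) in the consistency step, noting that the truncation is inactive before $\tau_m$, whereas you invoke pathwise uniqueness for the truncated equation \eqref{eqn:o3}; both are available and equally valid.
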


\begin{proof}
To construct a strong solution to the jump type SDE (\ref{eqn}) we use the sequence of strong solutions
 $(Z^m)_{m\in\N}$ {to \eqref{eqn:o3}} and stopping times $(\tau_m)_{m\in\N}$ constructed and defined above.
For $t\in [0,\tau_1)$, we set $Z_t:=Z^1_t$ and inductively set $Z_t:=Z^m_t$ for
 $t\in [\tau_{m-1},\tau_m)$, $m\geq 2$.
This definition is consistent, since $Z^m$ satisfies the SDE (\ref{eqn})
 up to time $\tau_m$ and, due to the pathwise uniqueness for the SDE (\ref{eqn}),
 if $m'>m$, then the solutions $Z^m$ and $Z^{m'}$ coincide up to $\tau_m$ almost surely.
The construction clearly shows that $Z$ is a strong solution to the SDE (\ref{eqn})
 for all $t\geq 0$.
\end{proof}

The pathwise uniqueness serves us mainly as a tool to prove the self-similarity of solutions.

\begin{proposition}\label{prop:self_sim_index_1}
Under the conditions of Theorem \ref{thm:1} the unique strong solutions of the SDE (\ref{eqn})
 are self-similar of index $1$.
\end{proposition}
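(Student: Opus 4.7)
The plan is to exploit the pathwise uniqueness (Proposition~\ref{prop:uniqueness}), which by a Yamada--Watanabe type argument yields uniqueness in law. So it suffices to show that, for any $c>0$, if $Z$ is the strong solution of \eqref{eqn} with initial condition $z\geq 0$, then the rescaled process
\[
   \tilde Z_t := c^{-1} Z_{ct}, \qquad t\geq 0,
\]
is also a weak solution of \eqref{eqn}, but now with initial condition $c^{-1}z$ and driven by suitable rescaled noises. Uniqueness in law then gives $\text{Law}(\tilde Z) = \text{Law}(Z^{(c^{-1}z)})$, which is exactly the self-similarity property \eqref{self_sim} with index $a=1$.

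The implementation amounts to a careful change of variables in each term of \eqref{eqn} evaluated at time $ct$. For the drift one simply has $c^{-1}(\log E(\ee^{\xi_1};\zeta>1))\cdot ct = (\log E(\ee^{\xi_1};\zeta>1))\cdot t$. For the Brownian term, define $\tilde B_t := c^{-1/2} B_{ct}$, which is a standard $(\cG_{ct})_{t\geq 0}$-Brownian motion; the substitution $s=cu$ and the identity $Z_{cu}=c\tilde Z_u$ give
\[
   c^{-1}\sigma \int_0^{ct}\sqrt{Z_s}\,\dd B_s
        = \sigma \int_0^t \sqrt{\tilde Z_u}\,\dd \tilde B_u.
\]
For the Poisson integrals, introduce the image measures $\tilde{\mathcal N}$ and $\tilde{\mathcal M}$ obtained from $\mathcal N$ and $\mathcal M$ by the map $(s,r,u)\mapsto (s/c, cr, u)$ (resp. $(s,r)\mapsto (s/c,cr)$). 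A Jacobian computation shows that $\tilde{\mathcal N}$ has intensity $\dd u\otimes \dd r'\otimes \Pi(\dd u')$ and $\tilde{\mathcal M}$ has intensity $q\,\dd u\otimes \dd r'$, so they are distributed as the driving noises of \eqref{eqn}, and independence and adaptedness are preserved. The substitution $s=cu$, $r=r'/c$ turns the indicator into $\mathbf 1_{\{r' \tilde Z_{u-}\leq 1\}}$ (since $r Z_{s-} = (r'/c)(c\tilde Z_{u-}) = r'\tilde Z_{u-}$) and multiplies the integrand $Z_{s-}(\ee^{u'}-1)$ by the factor $c$, which cancels the $c^{-1}$ coming from the rescaling; analogously for the $\mathcal M$-integral. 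Thus
\[
   \tilde Z_t = c^{-1}z + \big(\log E(\ee^{\xi_1};\zeta>1)\big)t + \sigma\int_0^t\sqrt{\tilde Z_u}\,\dd\tilde B_u
              + (\text{Poisson integrals w.r.t. } \tilde{\mathcal N},\tilde{\mathcal M}),
\]
which is exactly \eqref{eqn} started from $c^{-1}z$ driven by $(\tilde B,\tilde{\mathcal N},\tilde{\mathcal M})$.

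The remaining step is to invoke that pathwise uniqueness implies uniqueness in law for the SDE \eqref{eqn} (the jump-type Yamada--Watanabe principle, see e.g.\ Kurtz's formulation), so that both $\tilde Z$ and $Z^{(c^{-1}z)}$ have the same distribution on $\D$. This yields the scaling identity in \eqref{self_sim}.

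The only mildly subtle point is the correct rescaling of the Poisson random measures: the state-dependent indicator $\mathbf 1_{\{r Z_{s-}\leq 1\}}$ couples $r$ to the scaling of $Z$, so one must scale $r\mapsto cr$ precisely to compensate for $Z\mapsto c\tilde Z$, and then verify that the Jacobian exactly preserves the intensity $\dd s\otimes\dd r\otimes \Pi(\dd u)$. Once this bookkeeping is carried out the argument is essentially algebraic, and the conclusion is immediate from pathwise uniqueness.
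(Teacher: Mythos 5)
Your proposal is correct and follows essentially the same route as the paper: rescale $Z_{ct}/c$, absorb the factors into a time-changed Brownian motion $c^{-1/2}B_{ct}$ and image Poisson random measures under $(s,r,u)\mapsto(s/c,cr,u)$ (which preserve the intensities), and conclude by the weak uniqueness implied by pathwise uniqueness. The bookkeeping with the indicator $\mathbf 1_{\{rZ_{s-}\leq 1\}}$ and the cancellation of the factor $c$ match the paper's computation exactly.
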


\begin{proof}
Let $c>0$ be fixed and define $\bar Z_t:=\frac{1}{c}Z_{ct}$, $t\geq 0$,
 where $(Z_t)_{t\geq 0}$ is the unique strong solution of the SDE (\ref{eqn}) with
 initial condition $z\geq 0$.
Then we obtain
	\begin{align*}
		\bar Z_t&=\frac{z}{c}
                       + \big( \log E\big(\ee^{\xi_1};\zeta>1\big)\big) t
                       + \frac{\sigma}{c} \int_0^{ct}\sqrt{Z^{}_s} \dd B_s
                      -\frac{1}{c}\int_0^{ct}\!\!\int_0^\infty\mathbf 1_{\{r Z_{s-}\leq 1\}} Z_{s-}
                              (\mathcal M-\mathcal M')(\dd s,\dd r)\\
     &\quad    + \frac{1}{c} \int_0^{ct}\int_0^\infty\int_\R
                     \mathbf 1_{\{rZ^{}_{s-}\leq 1\}} Z^{}_{s-}[\ee^u-1]
                       (\mathcal N-\mathcal N')(\dd s,\dd r,\dd u),
      \qquad t\geq 0.
	\end{align*}
By Revuz and Yor \cite[Proposition V.1.5]{RY} we have the almost sure identity
 \[
   \frac{\sigma}{c} \int_0^{ct}\sqrt{Z_s} \dd B_s
       = \sigma\int_0^{t}\sqrt{\frac{1}{c}Z_{cs}} \dd \left(\frac{1}{\sqrt{c}}B_{cs}\right),
         \qquad t\geq 0,
 \]
 since $Z$ has c\`{a}dl\`{a}g paths so that $P\big(\int_0^t Z_s\,\dd s<\infty\big)=1$ for all $t\geq 0$.
 Next, we also use the analogue almost sure identities
 \begin{align*}
          \int_0^{ct}&\int_0^\infty
                     \mathbf 1_{\{rZ_{s-}\leq 1\}} Z^{}_{s-}
                       (\mathcal M-\mathcal M')(\dd s,\dd r)\\
            & = \int_0^t\int_0^\infty
                     \mathbf 1_{\{r\frac{1}{c}Z_{(cs)-}\leq 1\}} Z_{(cs)-}
                       (\mathcal M-\mathcal M')(c\dd s,c^{-1}\dd r),
               \qquad t\geq 0,
 \end{align*}
 and
  \begin{align*}
          \int_0^{ct}&\int_0^\infty\int_\R
                     \mathbf 1_{\{rZ_{s-}\leq 1\}} Z^{}_{s-}[\ee^u-1]
                       (\mathcal N-\mathcal N')(\dd s,\dd r,\dd u)\\
            & = \int_0^t\int_0^\infty\int_\R
                     \mathbf 1_{\{r\frac{1}{c}Z_{(cs)-}\leq 1\}} Z_{(cs)-}[\ee^u-1]
                       (\mathcal N-\mathcal N')(c\dd s,c^{-1}\dd r,\dd u),
               \qquad t\geq 0.
 \end{align*}
We do not give the justifications for these simple facts and refer to the arguments of the proof
 of Proposition \ref{prop:reversed}, Steps 4c), 4d) in a more delicate context.
Motivated by the above three identities, we define the Wiener process
 $\bar B_t:=\frac{1}{\sqrt{c}}B_{ct}$, $t\geq 0$, and {the independent Poisson random measures  $\bar\cN$ on $(0,\infty)\times (0,\infty)\times \R$
 by $\bar\cN(\dd s,\dd r,\dd u):=\cN(c\dd s,c^{-1}\dd r,\dd u)$ and  $\bar {\mathcal M}$ on $(0,\infty)\times (0,\infty)$
 by $\bar{\mathcal M}(\dd s,\dd r):={\mathcal M}(c\dd s,c^{-1}\dd r)$.

 It follows directly from the definition of a Poisson random measure that
 $\bar{\mathcal N}$ is a Poisson random measure with the
 same intensity measure as $\mathcal N$ and $\bar{\mathcal M}$ is a Poisson random measure with the
 same intensity measure as $\mathcal M$.
With these definitions, the above calculation leads to
	\begin{align*}
     \bar Z_t   &= \frac{z}{c} + \left( \log E(\ee^{\xi_1};\zeta>1) \right) t
          +\sigma \int_0^{t} \sqrt{\bar Z_{s}} \dd \bar B_{s}
                   - \int_0^{t}\int_0^\infty
                  \mathbf 1_{\{r\bar Z_{s-}\leq 1\}} \bar Z_{s-}
                  (\bar{\mathcal M} - \bar{\mathcal M}')(\dd s,\dd r)\\
          &\quad+ \int_0^{t}\int_0^\infty\int_\R
                  \mathbf 1_{\{r\bar Z_{s-}\leq 1\}} \bar Z_{s-}[\ee^u-1]
                  (\bar{\mathcal N} - \bar{\mathcal N}')(\dd s,\dd r,\dd u),
               \quad t\geq 0.
	\end{align*}}
Hence, $\bar Z$ is a strong solution to (\ref{eqn}) with initial condition $\frac{z}{c}$ so that pathwise uniqueness (which implies weak uniqueness) for (\ref{eqn}) proved in Proposition \ref{prop:uniqueness} implies the claim.
\end{proof}

Finally, we prove the strong Markov property of solutions to (\ref{eqn}).

\begin{proposition}\label{SM}
Under the conditions of Theorem \ref{thm:1} the unique strong solution of the jump type SDE (\ref{eqn})
 is {a strong Markov process.}
\end{proposition}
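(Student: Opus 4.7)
The strategy is the standard argument showing that pathwise uniqueness for a jump type SDE transfers the strong Markov property from the driving noises to the solution. Fix a finite $(\cG_t)$-stopping time $T$ and introduce the time-shifted processes
\begin{align*}
\tilde Z_t := Z_{T+t}, \qquad \tilde B_t := B_{T+t} - B_T, \qquad \tilde{\cG}_t := \cG_{T+t},
\end{align*}
together with the Poisson random measures $\tilde{\cN}$ on $(0,\infty)\times(0,\infty)\times\R$ and $\tilde{\mathcal M}$ on $(0,\infty)\times(0,\infty)$ defined as the pushforwards of $\cN$ and $\mathcal M$ under the shift $(s,\cdot)\mapsto(s-T,\cdot)$ restricted to $\{s>T\}$. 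By the strong Markov property of Brownian motion and of Poisson random measures (see e.g. Ikeda and Watanabe \cite[Chapter II]{IW}), $\tilde B$ is an $(\tilde \cG_t)$-standard Wiener process, $\tilde{\cN}$ and $\tilde{\mathcal M}$ are $(\tilde \cG_t)$-Poisson random measures with the same intensities as $\cN$ and $\mathcal M$, the three noises are mutually independent and they are jointly independent of $\cG_T$.

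The next step is to verify that $\tilde Z$ is a strong solution of the SDE (\ref{eqn}) driven by $(\tilde B,\tilde{\cN},\tilde{\mathcal M})$ with (random, $\cG_T$-measurable) initial condition $Z_T$. I would split each stochastic integral in (\ref{eqn}) evaluated at time $T+t$ into its value at time $T$ plus an integral over $(T,T+t]$, and then perform the change of variable $s\mapsto T+s$ to identify the latter with the corresponding integral against the shifted noise on $(0,t]$. This is a routine manipulation of It\=o and compensated Poissonian integrals, of exactly the same flavour as the scaling identities already used in the proof of Proposition \ref{prop:self_sim_index_1}.

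Finally, pathwise uniqueness established in Proposition \ref{prop:uniqueness} extends to random, $\cG_T$-measurable initial conditions by conditioning on $Z_T$, and by a standard Yamada-Watanabe type argument it implies weak uniqueness. Consequently, the regular conditional law of $\tilde Z$ given $\cG_T$ coincides with $P_{Z_T}$, the law of the unique strong solution of (\ref{eqn}) started from $Z_T$. This gives, for every bounded measurable functional $F$ on $\D$,
\begin{align*}
E\big(F\big((Z_{T+t})_{t\geq 0}\big)\,\big\vert\,\cG_T\big)=E_{Z_T}\big(F\big((Z_t)_{t\geq 0}\big)\big),
\end{align*}
which is precisely the strong Markov property.

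I expect the main obstacle to be the bookkeeping around the shifted Poisson random measures and the verification that $\tilde Z$ indeed satisfies (\ref{eqn}) on the shifted stochastic basis $(\Omega,\cG,(\tilde \cG_t)_{t\geq 0},P)$ for all $t\geq 0$; once this identification of $\tilde Z$ with a solution started at $Z_T$ is in place, pathwise uniqueness (Proposition \ref{prop:uniqueness}) closes the argument at once.
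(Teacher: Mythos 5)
Your proposal is correct in outline but takes a genuinely different route from the paper. You use the classical scheme: shift the equation at a stopping time $T$, observe that the shifted noises are again a Brownian motion and Poisson random measures independent of $\cG_T$, verify that $(Z_{T+t})_{t\geq0}$ solves (\ref{eqn}) with initial condition $Z_T$, and conclude by pathwise (hence weak) uniqueness. The paper instead never shifts at a stopping time: it works at the level of the martingale problem, first proving well-posedness of the martingale problems for the truncated generators $\cA^{\eps,m}$ for arbitrary initial distributions (via Lemma 4.5.13, Example 4.5.14 and Problem 49 of Ethier and Kurtz), deducing from their Theorem 4.4.6 the Borel measurability of $z\mapsto\P^{\eps,m}_z$, transporting this measurability to $\P^m_z$ and then to $\P_z$ by pointwise limits along the approximation scheme, and finally invoking Theorem 4.4.2 of Ethier and Kurtz to get the strong Markov property. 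Your approach is more direct and avoids the Ethier--Kurtz machinery, at the price of the bookkeeping with shifted Poisson random measures that you yourself identify.

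There is, however, one genuine gap, and it is precisely the point that forces the paper onto the longer path: the measurability of $z\mapsto\P_z$. Your concluding identity $E\big(F((Z_{T+t})_{t\geq0})\mid\cG_T\big)=E_{Z_T}\big(F\big)$ presupposes that $z\mapsto E_z(F)$ is Borel, since otherwise $E_{Z_T}(F)$ is not a well-defined ($\cG_T$-measurable) random variable and there is no Markov kernel to speak of. This does not come for free from pathwise uniqueness here, because existence is obtained by tightness and weak limits rather than by an explicit jointly measurable solution map $(z,B,\cN,\mathcal M)\mapsto Z$; Step 1 of the paper's proof is devoted entirely to supplying this measurability. A second, more minor, point: Proposition \ref{prop:uniqueness} is stated for deterministic initial conditions $X_0=Y_0=z$, so the extension to $\cG_T$-measurable random initial data that you invoke should be argued explicitly (it does go through by conditioning, exactly because the shifted noises are independent of $\cG_T$). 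Once the measurability of $z\mapsto\P_z$ is established, your argument closes correctly.
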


\begin{proof}
In the following for all $0<\eps<m$ and $z\geq 0$, let us denote by $\P^{\eps,m}_z$, $\P^m_z$, $\P_z$ the laws of
 the unique solutions to (\ref{eqn:o2}), (\ref{eqn:o3}), (\ref{eqn}) started in $z\in\R_+$ on $(\mathbb D,\mathcal D)$.

\textbf{Step 1 (measurability of $z\mapsto\P_z$):}
We first prove that the martingale problem corresponding to {the infinitesimal generator of}
 the {strong} solutions $Z^{\eps,m}$ {to (\ref{eqn:o2})} is well-posed.
 {The well-posedness} for deterministic initial conditions $z\geq 0$
 {follows from part (a) of Theorem \ref{thm:1},}
 and can be extended to {general initial distributions (i.e. probability measures on $\R_+$)}:
 by Problem 49 on page 273 of Ethier and Kurtz \cite{EK} it suffices to verify the conditions of their Lemma 4.5.13.
Namely, we have to check that the domain of the infinitesimal generator of $\cA^{\eps,m}$ contains an algebra that
  separates points and vanishes nowhere (i.e. not all of its elements simultaneously vanish at a point) and that
  for each compact set $K\subseteq \R_+$ and $\eta>0$ there exist a sequence of compact subsets $K_n\subseteq \R_+$,
  $K\subseteq K_n$, $n\in\N$, and smooth functions $f_n:\R_+\to\R$, $n\in\N$, with compact support such that for
  $F_n:=\{z\in\R_+ : \inf_{x\in K_n}\vert x-z\vert\leq \eta\}$ we have
  \begin{align}\label{EK1}
    &\beta_{n,\eta}:=\inf_{y\in K} f_n(y) - \sup_{y\in\R_+\setminus F_n} f_n(y) >0,\qquad n\in\N,\\ \label{EK2}
    &\lim_{n\to\infty}\beta_{n,\eta}^{-1}\sup_{y\in F_n} (\cA^{\eps,m} f_n)^{-}(y)=0,\\ \label{EK3}
    &\lim_{n\to\infty}\beta_{n,\eta}^{-1} \Big( \sup_{x\in\R_+}\vert f_n(x)\vert - \inf_{y\in K}f_n(y)\Big)=0,
  \end{align}
  where $(\cA^{\eps,m} f)^{-}$ denotes the negative part of $\cA^{\eps,m} f$.
First note that the set of bounded twice continuously differentiable functions on $\R_+$ with compact support
 and bounded first and second derivatives serves as an appropriate choice for such an algebra.
Next let $k\in\N$ be such that $K\subseteq [0,k)$, $K_n:=[0,k+n]$, $n\in\N$, and let us choose the functions
 $f_n$, $n\in\N,$ as in Example 4.5.14 in Ethier and Kurtz \cite{EK}, namely $f_n:\R_+\to\R$ is a smooth function
 with compact support such that
 \begin{itemize}
   \item $f_n(x) = 1 + \log(1+ (k+n+\eta)^2) - \log(1+ x^2)$ if $0\leq x\leq k+n+\eta$,
   \item $f_n(x)\in[0,1]$ if $x>k+n+\eta$,
   \item $\sup_{x\in\R_+\setminus F_n}\vert f_n'(x)\vert <\infty$ and
         $\sup_{x\in\R_+\setminus F_n}\vert f_n''(x)\vert <\infty$. \end{itemize}
By Example 4.5.14 in Ethier and Kurtz \cite{EK}, \eqref{EK1} and \eqref{EK3} hold.
By (\ref{ka}) we have
\begin{align*}
  \sup_{y\in F_n} (\cA^{\eps,m} f_n)^{-}(y) \leq \sup_{y\in\R_+} (\cA^{\eps,m} f_n)(y)<\infty,
   \qquad n\in\N,
\end{align*}
 further, by the definition of $f_n$,
 \[
   \beta_{n,\eta}\geq \log(1+ (k+n+\eta)^2) \to \infty \quad \text{as $n\to\infty$,}
 \]
 which yields \eqref{EK3}. Then Theorem 4.4.6 of Ethier and Kurtz \cite{EK} implies measurability of $z\mapsto \P^{\eps,m}_z$ mapping $(\R_+,\mathcal B(\R_+))$ to $(\mathcal P(\mathbb D),\rho)$,
 where $\mathcal P(\mathbb D)$ is the set of all probability measures on $(\D,\cD)$ equipped
 with the Prokhorov metric $\rho$.\\
Since {for all $z\geq 0$, $\P^{\eps,m}_z$ converges to $\P^m_z$} with respect to the Prokhorov metric
 {$\rho$ as $\eps\downarrow 0$},
 the map $z\mapsto \P_z^m$ is measurable as pointwise limit of a sequence of measurable maps.
Finally, notice that our pathwise construction of $Z$ via $Z^m$ {given in the proof of Proposition \ref{P:existence}}
 implies in particular that $Z^m$ converges weakly to $Z$ {as $m\to\infty$ in the Skorokhod's topology of $\D$.}
{Indeed, by the construction, if $m'\geq m$, then $Z^{m'}_t=Z^m_t$ for $t\in[0,\tau_m)$,
 and, by Lemma \ref{lem:tau}, $(\tau_m)_{m\in\N}$ is increasing and tends to infinity almost surely.
Hence, $Z^m$ converges to $Z$ as $m\to\infty$ in the local uniform topology on $\D$.
Since this topology is weaker than the Skorokhod's topology (see, e.g. Jacod and Shiryaev \cite[Proposition VI.1.17]{JS}),
 we find that $Z^m$ converges weakly to $Z$ as $m\to\infty$ in the Skorokhod's topology of $\D$.
Using again that the pointwise limit of a sequence of measurable mappings is measurable
 we have the measurability of the map $z\mapsto \P_z$.}

\textbf{Step 2 (uniqueness for general initial {distributions}):} Well-posedness for
 {the martingale problem corresponding to the infinitesimal generator of the strong solutions $Z$ to (\ref{eqn})
  with general non-negative initial distributions}
  follows then from well-posedness for deterministic initial conditions
 $z\in \R_+$ combined with the measurability of the first step (see for instance Lemma IX.4.4 of Jacod and Shiryaev \cite{JS}).
	
\textbf{Step 3 (strong Markov property):} Finally, the strong Markov property {for the unique
 strong solutions to the SDE (\ref{eqn:o2})} follows from the first two
steps for instance by Theorem 4.4.2 of Ethier and Kurtz \cite{EK}.
\end{proof}

We now come to the arguments needed to turn the heuristic derivation of the jump type SDE (\ref{eqn}) given
 in the introduction into mathematics.
In order to determine the Lamperti transformed L\'evy process corresponding to the
 unique strong solution $Z$ absorbed at zero, we show that applying Lamperti's transformation to the L\'evy process $\xi$
 associated to the SDE (\ref{eqn}) leads to a weak solution of (\ref{eqn}) up to the first hitting time of zero.

\begin{proposition}\label{prop:reversed}
{Under the conditions of Theorem \ref{thm:1} let}
 $Z$ be the pathwise unique strong solution to (\ref{eqn}).
Then the Lamperti transformed L\'evy process $\xi$ {of the pssMp $Z^\dag$ of index $1$}
 has L\'evy triplet $(\gamma,\sigma^2,\Pi)$ {and is} killed at rate $q\geq 0$.
\end{proposition}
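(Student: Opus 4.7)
The plan is to verify directly that the Lamperti time change of the strong solution $Z$ of \eqref{eqn}, namely
\[
 \xi_s:=\log(Z_{\tau(s)}/z),\qquad 0\leq s<\sigma(T_0-),\qquad \text{where }\ \sigma(t):=\int_0^t\!\frac{\dd v}{Z_v},\ \tau:=\sigma^{-1},
\]
(with $\xi$ sent to its cemetery at $s=\sigma(T_0-)$) admits the L\'evy--It\=o representation of a L\'evy process with triplet $(\gamma,\sigma^2,\Pi)$ killed at rate $q$. Since $\xi$ is by construction the Lamperti transform of $Z^\dag$, this identifies its characteristics and proves the proposition.

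First I would write \eqref{eqn} in the uncompensated form \eqref{ab} and apply It\=o's formula to $X_t:=\log Z_t$ on $[0,T_0)$. The logarithm turns the multiplicative jumps $Z_{t-}\mapsto Z_{t-}\ee^u$ into additive jumps of size $u$, and a direct computation shows that the $(1/Z_t)$-factors produced by the Brownian second-order correction, by the constant drift and by the small-jump It\=o correction combine in such a way that the small-jump compensator $\int_{|u|\leq 1}(\ee^u-1-u)\Pi(\dd u)$ in the drift cancels exactly against $\int_{|u|\leq 1}(u-(\ee^u-1))\Pi(\dd u)$ coming from Taylor expansion. What remains is
\begin{align*}
  X_t &= \log z+\gamma\int_0^t\!\frac{\dd v}{Z_v}+\sigma\int_0^t\!\frac{\dd B_v}{\sqrt{Z_v}}\\
      &\quad +\int_0^t\!\!\int_0^\infty\!\!\int_{|u|\leq 1}\!\mathbf 1_{\{rZ_{v-}\leq 1\}}u\,(\mathcal N-\mathcal N')(\dd v,\dd r,\dd u)\\
      &\quad +\int_0^t\!\!\int_0^\infty\!\!\int_{|u|>1}\!\mathbf 1_{\{rZ_{v-}\leq 1\}}u\,\mathcal N(\dd v,\dd r,\dd u),\qquad t<T_0,
\end{align*}
since any firing of $\mathcal M$ with $rZ_{v-}\leq 1$ sends $Z$ to zero and is therefore the killing time itself. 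Applying the time change $\tau$, the drift becomes $\gamma s$ and Dambis--Dubins--Schwarz identifies $\tilde B_s:=\int_0^{\tau(s)}\dd B_v/\sqrt{Z_v}$ as a standard Brownian motion because $\langle\tilde B\rangle_s=s$. Defining $\tilde{\mathcal N}$ as the image of $\mathbf 1_{\{rZ_{v-}\leq 1\}}\mathcal N(\dd v,\dd r,\dd u)$ under $(v,r,u)\mapsto(\sigma(v),u)$, the change of variable $\dd w=\dd v/Z_v$ identifies its predictable compensator as the deterministic measure $\dd w\otimes\Pi(\dd u)$; analogously, the image of $\mathbf 1_{\{rZ_{v-}\leq 1\}}\mathcal M(\dd v,\dd r)$ under $v\mapsto\sigma(v)$ has compensator $q\,\dd w$, which is killing at constant rate $q$. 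Reassembling the ingredients yields exactly the L\'evy--It\=o representation of a L\'evy process of triplet $(\gamma,\sigma^2,\Pi)$ killed at rate $q$.

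The main technical obstacle is the rigorous identification of $\tilde{\mathcal N}$, the image of $\mathcal M$ and $\tilde B$ as \emph{mutually independent} noises of the claimed types, given only their compensators with respect to the time-changed filtration $(\mathcal G_{\tau(s)})_{s\geq 0}$. Knowing the compensator of $\tilde{\mathcal N}$ is not by itself enough to conclude that it is Poisson; the cleanest route is to invoke Watanabe's characterisation of Poisson random measures (see, e.g.\ Ikeda--Watanabe \cite[Chapter II, Theorem 6.2]{IW}) for $\tilde{\mathcal N}$ and for the image of $\mathcal M$, together with L\'evy's characterisation of Brownian motion for $\tilde B$. Mutual independence then follows from the orthogonality of the continuous martingale part of $Z$, of the compensated $\mathcal N$-integral and of the compensated $\mathcal M$-integral in the original filtration, an orthogonality that is preserved by the time change because $\sigma$ is $(\mathcal G_t)$-adapted.
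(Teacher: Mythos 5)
Your proposal is correct in outline, but it runs in the opposite direction from the paper's proof, so it is worth recording the difference. The paper starts from a L\'evy process $\xi$ with triplet $(\gamma,\sigma^2,\Pi)$ and killing rate $q$, builds $Y_t=z\exp(\xi_{\tau(tz^{-1})})$, and shows that $Y$ is a weak solution of \eqref{eqn} up to $S_0$; the identification of $Z^\dag$ then goes through pathwise uniqueness and well-posedness of the localized martingale problem. This forces the paper to \emph{reconstruct} a full driving Brownian motion $W$ and a full Poisson random measure $\cP$ on $(0,\infty)\times(0,\infty)\times\R$, which requires enlarging the probability space with an auxiliary Brownian motion $\bar B$, an auxiliary Poisson measure $\cQ$ on the region $\{rY_{s-}>1\}\cup\{Y_{s-}=0\}$, and uniform marks $R_n$ to spread out the $r$-coordinate. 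You instead time-change the solution $Z$ itself and read off the L\'evy--It\=o decomposition of $\log(Z_{\tau(\cdot)}/z)$; since you only need the measure projected onto the $(w,u)$-coordinates, whose compensator $\dd w\otimes\Pi(\dd u)$ is already deterministic after integrating out $r$ and changing time, no auxiliary randomization or uniqueness step is needed. What you give up is that the paper's direction simultaneously verifies the Ansatz \eqref{ab} (i.e.\ that Lamperti's transformation produces solutions of the SDE), which is of independent interest. One caveat on your final paragraph: ``orthogonality preserved by the time change'' is not by itself a proof of independence; the correct tool is exactly the one the paper uses in Steps 3b--3d, namely L\'evy's characterisation for $\tilde B$, Watanabe's characterisation for the time-changed point measures, and then Ikeda--Watanabe \cite[Chapter II, Theorem 6.3]{IW}, which yields independence of a Brownian motion and point measures adapted to the \emph{same} (time-changed) filtration precisely because their compensators are deterministic. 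With that substitution, and the routine localization needed to apply It\=o's formula to $\log Z$ on $[0,T_0)$, your argument goes through.
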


\begin{proof}

By Proposition \ref{prop:self_sim_index_1}, the unique strong solution to (\ref{eqn}) is self-similar of self-similarity index $1$ so that the absorbed process $Z^\dag$ is a pssMp of self-similarity index $1$.
 Now let $\xi$ be a L\'evy process with
 L\'evy triplet $(\gamma,\sigma^2,\Pi)$ and killing rate $q$ and, for some $z>0$, define
 \begin{align}\label{Y}
	    Y_t := z \exp\left(\xi_{\tau(tz^{-1})}\right),\quad t\geq 0,
 \end{align}
 where $(\tau(t))_{t\geq 0}$ is the (extended real-valued) continuous, increasing inverse
 of $I_t=\int_0^t\ee^{\xi_s}\,\dd s$, $t\geq 0$, with the convention $Y_t:=0$ for $t\geq S_0$,
 where $S_0:=\inf\{t\geq 0 : Y_t=0\}$.
To prove the statement of the proposition, by pathwise uniqueness (which implies well-posedness
 of the corresponding martingale problem), it is then enough to check that $(Y_t)_{{t\in[0,S_0)}}$
 is a weak solution of the SDE \eqref{eqn} with initial condition $z$ up to the first hitting time of $0$.

Since the killing only occurs for the first
 jump of $Y$ down to zero at $S_0$,
 which - as we explained in the derivation of (\ref{eqn}) - resulted in the killing integral of (\ref{eqn})
 we assume from now on  $q=0$.

 We will use throughout the proof that $S_0<\infty$ almost surely if and only if $\xi$ drifts to $-\infty$ so that in this case the infinite interval $[0,\infty)$ is compressed via $\tau$ to the finite interval $[0,S_0)$. More formally, this says that almost surely $\tau(tz^{-1})<\infty$ for $t<S_0$ with  $\lim_{t\uparrow S_0}\tau(tz^{-1})=\infty$.
\smallskip

\textbf{Step 1 (integrals are well-defined):}
First we check that with the definition (\ref{Y}), $Y$ can be a solution to (\ref{eqn}) in the sense that
 the integrals in the SDE \eqref{eqn} can be defined as local martingales.

\smallskip

\textbf{Step 1a (localization sequence):}
We show that there exists a sequence of stopping times $(\delta_m)_{m\in\N}$ {defined via $Y$}
 such that $P(\lim_{m\to\infty}\delta_m=\infty)=1$.
Let us define the increasing sequence of stopping times $\delta_m:=\inf\{t\geq 0:Y_t\geq m\}$, $m\in\N$.
Since $\delta_m$ increases there exists a random variable $C$ such that
 $\lim_{m\to\infty}\delta_m = C$ almost surely.
We aim at proving that $P(C=\infty)=1$.
First note that due to the aforementioned properties of $\tau$, $(Y_t)_{t\geq 0}$ has almost surely
 c\`{a}dl\`{a}g paths.
In what follows we distinguish the cases according to $\xi$ drifting to $-\infty$, $+\infty$ or oscillating.

If $\xi$ drifts to $+\infty$ or oscillates, then $S_0=\infty$ almost surely
 and hence $\lim_{t\to\infty} \tau(tz^{-1})=\infty$ so that $(\xi_{\tau(tz^{-1})})_{t\geq 0}$ drifts to $+\infty$ or
 oscillates.
Let us suppose that $P(C<\infty)>0$ and let $\omega\in\Omega$ be such that $C(\omega)<\infty$.
Using that $\delta_m(\omega)\leq C(\omega)$, $m\in\N$, by the definition of an infimum, there exists a sequence
 $(t_m)_{m\in\N}$ (depending on $\omega$) such that $Y_{t_m}(\omega)\geq m$, $m\in\N$, and
 $t_m<C(\omega)$, $m\in\N$ (in the case of $\delta_m(\omega)<C(\omega)$, $m\in\N$), or
 $\lim_{m\to\infty} t_m(\omega) = C(\omega)$ (in the case of $\delta_{m_0}(\omega)=C(\omega)$ for some $m_0\in\N$).
Hence, since $\tau$ is increasing and continuous, $\tau(t_mz^{-1})(\omega)\leq \tau(Cz^{-1})(\omega)$, $m\in\N$,
 or $\lim_{m\to\infty}\tau(t_mz^{-1})(\omega)=\tau(Cz^{-1})(\omega)$.
Further, since $Y_{t_m}(\omega)\geq m$, $m\in\N$, we have $\lim_{m\to\infty}Y_{t_m}(\omega)=\infty$
 and then $\lim_{m\to\infty}\xi_{\tau(t_m z^{-1})}(\omega)=\infty$.
Then it would imply that $\xi$ would not be bounded on the interval $[0,\tau(Cz^{-1})(\omega)+1]$.
This is a contradiction, since, by $C(\omega)<S_0(\omega)=\infty$, we have $\tau(Cz^{-1})(\omega)<\infty$
 implying that the interval $[0,\tau(Cz^{-1})(\omega)+1]$ is finite.
Hence $P(C=\infty)=1$ if $\xi$ drifts to $+\infty$ or it oscillates.

If $\xi$ drifts to $-\infty$, then $S_0$ is almost surely finite and
 $\lim_{t\uparrow S_0} Y_t = \lim_{t\uparrow S_0} z\ee^{\xi_{\tau(tz^{-1})}}=0$
 since $\lim_{t\uparrow S_0}\tau(tz^{-1})=\infty$.
Recalling also that, by convention, $Y_t=0$ for $t\geq S_0$ and $Y$ has c\`{a}dl\`{a}g paths,
 we find that the trajectories of $(Y_t)_{t\geq 0}$ are bounded from above implying
 that $\delta_m$ tends to infinity as $m\to\infty$.
Hence $P(C=\infty)=1$ also in the case of $\xi$ drifting to $-\infty$.
\smallskip

\textbf{Step 1b (the integral with respect to $B$):}
To define the integral $\int_0^t\sqrt{Y_s}\,\dd B_s$ we use the localized estimate
 \begin{align}\label{h}
   E\left(\int_0^{t\wedge \delta_m} Y_s\,\dd s\right)
      \leq m E(t\wedge \delta_m) \leq tm <\infty,
  \qquad m\in\N, \; t\geq 0,
 \end{align}
 and Definition 2.4 in Ikeda and Watanabe \cite[page 57]{IW}.\\

\textbf{Step 1c (the integral with respect to $\cN-\cN'$):}
To define the Poissonian integral
 \[
   \int_0^{t }\int_0^\infty\int_\R
        \mathbf 1_{\{rY_{s-}\leq 1\}} Y_{s-}(\ee^u-1)
                             (\mathcal N-\mathcal N')(\dd s,\dd r,\dd u),
 \]
 we use
 \begin{align*}
 	&\E\left(   \int_0^{t\wedge\delta_m}\int_0^\infty\int_\R
       \mathbf 1_{\{rY_{s-}\leq 1\}}(Y_{s})^2(\ee^u-1)^2\,\dd s\,\dd r\,\Pi(\dd u)\right)\\
    &\quad = \E\left( \int_0^{t\wedge\delta_m} Y_s\,\dd s\right) \int_\R (\ee^u-1)^2\,\Pi(\dd u)<\infty,
 \end{align*}
 which follows by (\ref{h}) and that $E(\ee^{2\xi_1})<\infty$ {(which holds since $\xi$ is spectrally negative).}
Hence, the localized definition of Ikeda and Watanabe \cite[page 62]{IW} can be applied.

\smallskip

\textbf{Step 2 (SDE for \textbf{\mvb $z\exp(\xi)$}):}
First let us define $\eta_t:=z\exp(\xi_t)$, $t\geq 0$. Applying
 It\=o's formula (see, e.g. Ikeda and Watanabe \cite[Chapter II, Theorem 5.1]{IW})
 to the L\'evy-It\=o representation \eqref{li} of $\xi$, we have
 \begin{align}\label{Ito_exp}
  \begin{split}
   \eta_t
     &= z + \gamma \int_0^t \eta_s\dd s
          + \sigma \int_0^t \eta_s\dd B_s
          + \frac{\sigma^2}{2} \int_0^t \eta_s\dd s\\
     &\quad + \int_0^t\int_{|u|\leq 1} ( z\ee^{\xi_{s-} + u}  - z\ee^{\xi_{s-}} )
               (\mathcal{N}_0 - \mathcal{N}_0')(\dd s,\dd u)  \\
     &\quad  + \int_0^t\int_{|u|>1} ( z\ee^{\xi_{s-} + u}  - z\ee^{\xi_{s-}} )
                      \mathcal{N}_0 (\dd s,\dd u)\\
     &\quad  + \int_0^t\int_{|u|\leq 1}
              ( z\ee^{\xi_s + u}  - z\ee^{\xi_s} -uz\ee^{\xi_s})
             \,\dd s\,\Pi(\dd u) \\
     &= z + \left( \gamma + \frac{\sigma^2}{2} + \int_{|u|\leq 1}(\ee^u - 1 - u)\,\Pi(\dd u)\right)
             \int_0^t \eta_s\dd s
             + \sigma\int_0^t \eta_s \dd B_s\\
    & \phantom{=\;\;}
             + \int_0^t\int_{|u|\leq 1} \eta_{s-}(\ee^u-1)
                    (\mathcal{N}_0 - \mathcal{N}_0')(\dd s,\dd u)\\
     &\phantom{=\;\;}  + \int_0^t\int_{|u|>1} \eta_{s-}(\ee^u-1)
                      \mathcal{N}_0 (\dd s,\dd u) , \quad t\geq 0.
  \end{split}
 \end{align}
\smallskip

\textbf{Step 3 (some preparations):}
In order to deduce from (\ref{Ito_exp}) that the time-changed process
 $Y_t=\eta_{\tau(tz^{-1})}$, $t\in[0,S_0)$, is a solution of (\ref{eqn}) {with $q=0$} up to first hitting zero
 we need some preparations.
\smallskip

\textbf{Step 3a (the local martingale $W$ and the Poisson random measure $\mathcal P$):}
We denote by $(t_n,\Delta_n)_{n\in\N}$ an arbitrary labeling of the pairs
 associated to jump times and jump sizes of the time-changed L\'evy process
 $\left(\xi_{\tau(tz^{-1})} \right)_{t\in[0,S_0)}$.
By an enlarging procedure, we can assume that we are given a Wiener process $(\bar B_t)_{t\geq 0}$
 and an independent Poisson random measure $\cQ$ on $(0,\infty)\times (0,\infty)\times\R$ with intensity
 measure $\dd s\otimes \dd r\otimes\Pi(\dd u)$ both defined independently of $(B_{\tau(tz^{-1})})_{t\in[0,S_0)}$
 and $(t_n,\Delta_n)_{n\in\N}$ on a common filtered probability space
 $(\Omega',\mathcal H,(\mathcal H_t)_{t\geq 0},P')$.
Note that $\cG_{\tau(tz^{-1})}\subseteq \cH_t$, $t\leq 0<S_0$.
To keep the notation simple, we still use $P$ and $E$ for the (extended) probability measure and the expectation
 with respect to it, {respectively.}
Additionally, we choose an independent sequence of random variables $(R_n)_{n\in\N}$ uniformly distributed
 on $(0,1)$ such that $R_n$ is $\cH_{t_n}$-measurable and independent of $\cH_{t_n-}$
 (the existence of such a sequence can be assumed by the above mentioned enlargement of filtration).
Since $(B_{\tau(tz^{-1})})_{t\in[0,S_0)}$ is a continuous local martingale (not necessarily a Brownian motion)
 with respect to the filtration $(\cH_t)_{t\in[0,S_0)}$, we can define
 \[
   W_t := \int_0^t \sqrt{Y_s} \mathbf 1_{\{Y_s\ne 0\}}
                      \,\dd B_{\tau(sz^{-1})}
                     + \int_0^t \mathbf 1_{\{Y_s= 0\}}\,\dd \bar B_s,
                     \qquad t\geq 0.
 \]
Note that if $\xi$ drifts to $-\infty$, then $S_0<\infty$ almost surely and $\tau(sz^{-1})=\infty$
 for $s\geq S_0$.
Hence $B_{\tau(sz^{-1})}$ is not defined in this case, however, by convention, $Y_s=0$ for $s\geq S_0$
 and then in this case the first integral in the definition of $W$ is understood to be $0$.
We also define a new point measure by
 \begin{align*}
   \cP(A_1\times A_2\times A_3)
      & :=\sum_{n=1}^\infty{\mathbf 1_{A_1\times A_2\times A_3}((t_n,R_n/Y_{t_n-},\Delta_n))}  \\
         &\quad+\int_{A_1}\int_{A_2}\int_{A_3}(\mathbf 1_{\{rY_{s-}>1 \}} + \mathbf 1_{\{Y_{s-}=0\}} )
            \,\cQ(\dd s,\dd r,\dd u)
 \end{align*}
 for all $A_1,A_2\in\cB((0,\infty))$ and $A_3\in\cB(\R)$.
Here we note that we never divide by zero ($Y_{t_n-}\ne 0$, $n\in\N$), since $t_n<S_0$, $n\in\N$,
 and hence $\cP$ is well-defined.

\smallskip

\textbf{Step 3b ($W$ is a Brownian motion):}
First observe that $(W_t)_{t\geq 0}$ is a continuous local martingale
 with respect to the filtration $(\cH_t)_{t\geq 0}$, since by localizing
 with the sequence $(\delta_m)_{m\in\N}$ defined in Step 1a
 \begin{align*}
&\quad   E\left(\int_0^{t\wedge \delta_m} Y_s \mathbf 1_{\{Y_s\ne 0\}}
                      \,\dd \tau(sz^{-1})
                     + \int_0^{t\wedge \delta_m} \mathbf 1_{\{Y_s= 0\}}\,\dd s\right)\\
  & = E\left(\int_0^{t\wedge \delta_m} \mathbf 1_{\{Y_s\ne 0\}} \,\dd s
                     + \int_0^{t\wedge \delta_m} \mathbf 1_{\{Y_s= 0\}}\,\dd s\right) \leq t,
   \quad t\geq 0,\;\; m\in\N,
 \end{align*}
 so that Definition 2.4 in Ikeda and Watanabe \cite[page 57]{IW} can be used.
The equality above is justified by
 \begin{align}\label{help23}
   \tau'(t) = \frac{z}{Y_{tz}},\qquad t\in\big[0,S_0/z\big),
 \end{align}
 which easily follows, since, by the definition of the time-change $\tau$,
 \[
   \int_0^{\tau(t)} \exp(\xi_s)\,\dd s = t, \quad t\in\big[0,S_0/z),
 \]
 and differentiating both sides with respect to $t$ gives
 \[
   \exp(\xi_{\tau(t)}) \tau'(t) = 1, \quad t\in\big[0,S_0/z\big).
 \]
Furthermore, using the bilinearity of quadratic variation and the independence of the processes
 $(\bar B_t)_{t\geq 0}$ and $(B_{\tau(tz^{-1})})_{t\in[0,S_0)}$,
 the quadratic variation process of the $(\mathcal H_t)$-local martingale $W$ is
 \begin{align*}
  \langle W\rangle_t
    = \int_0^t Y_s \mathbf 1_{\{Y_s\ne 0\}} \,\dd \tau(sz^{-1})
       + \int_0^t \mathbf 1_{\{ Y_s=0\}}\,\dd s
    = \int_0^t \mathbf 1_{\{Y_s\ne 0\}}\,\dd s
      + \int_0^t \mathbf 1_{\{Y_s=0\}}\,\dd s
    = t
 \end{align*}
 for $t\geq 0$.
Here we call the attention to the fact that $\int_0^t \sqrt{Y_s} \mathbf 1_{\{Y_s\ne 0\}} \,\dd B_{\tau(sz^{-1})}$,
 $t\in[0,S_0)$, is not a square-integrable martingale in general, only a continuous local martingale,
 however, its quadratic variation process equals $\int_0^t Y_s \mathbf 1_{\{Y_s\ne 0\}} \,\dd \tau(sz^{-1})$,
 $t\in[0,S_0)$, see, e.g., Karatzas and Shreve \cite[page 147]{KS}.
Finally, by L\'evy's theorem, this shows that $(W_t)_{t\geq 0}$ is a $(\cH_t)$-Wiener process.

\smallskip

\textbf{Step 3c ($\mathcal P$ is a Poisson random measure):}
We show that $\mathcal P$ is a Poisson random measure on $(0,\infty)\times (0,\infty)\times \R$ with
 intensity measure $\dd s\otimes \dd r\otimes\Pi(\dd u)$.
Since $\mathcal P$ was defined to be an optional random measure, by Jacod and Shiryaev \cite[Theorems II.1.8 and II.4.8]{JS},  it is enough to show that
 \begin{align}\label{help1}
  \begin{split}
    &E\left(\int_0^\infty \int_0^\infty \int_\R H(s,r,u)\,\cP(\dd s,\dd r,\dd u)\right)
= E\left(\int_0^\infty \int_0^\infty \int_\R H(s,r,u)\,\dd s\,\dd r\,\Pi(\dd u)\right)
  \end{split}
 \end{align}
 for every non-negative predictable function $H$ on $\Omega\times (0,\infty)\times(0,\infty)\times\R$.
Here predictability means measurability with respect to the $\sigma$-algebra $\cA\otimes \cB((0,\infty)\times\R)$,
 where $\cA$ is the $\sigma$-algebra on $\Omega\times (0,\infty)$ generated by all left-continuous adapted
 processes (considered as mappings on $\Omega\times (0,\infty)$).
Note also that \eqref{help1} includes the case when $H$ is not integrable with respect to $\cP$
 meaning that both sides of \eqref{help1} are infinite in this case.
By the definition of $\mathcal P$ we can write
 \begin{align}\label{help21}
  \begin{split}
   E&\left(\int_0^\infty \int_0^\infty \int_\R H(s,r,u)\,\cP(\dd s,\dd r,\dd u)\right)\\
    &= E\left(\sum_{n=1}^\infty
         H(t_n,R_n/Y_{t_n-},\Delta_n)\right)\\
    &\quad  +  E\left(\int_0^\infty \int_0^\infty \int_\R H(s,r,u)
                   (\mathbf 1_{\{rY_{s-}>1\}} + \mathbf 1_{\{Y_{s-}=0\}} )
                     \,\cQ(\dd s,\dd r,\dd u)\right).
  \end{split}
 \end{align}
To express the first summand we apply Theorem II.1.8 of Jacod and Shiryaev \cite{JS} to the
 non-negative predictable function $$\widetilde H(s,r,u):=H(s,r/Y_{s-},u)
 ,\quad s>0,r>0,u\in\R,$$
 and the Poisson random measure on $(0,\infty)\times(0,\infty)\times\R$ defined by
 \[
   \widetilde\cP(A_1\times A_2\times A_3)
      := \sum_{n=1}^\infty {\mathbf 1_{A_1\times A_2\times A_3}((t_n,R_n,\Delta_n))},
      \qquad A_1,A_2\in\cB((0,\infty)),\;\; A_3\in\cB(\R),
 \]
 to obtain
 \begin{align*}
	 	 E\left(\sum_{n=1}^\infty H(t_n,R_n/Y_{t_{n-}},\Delta_n) \right)
		 &=\E\left(\int_0^\infty \int_0^\infty \int_\R \widetilde H(s,r,u)\,
                   \widetilde\cP(\dd s,\dd r,\dd u)\right)\\
	     	 &= E\left(\int_0^{S_0} \int_0^1 \int_\R H(s,r/Y_{s-},u)
                       \,\dd \tau({sz^{-1})} \,\dd r\,\Pi(\dd u) \right),
 \end{align*}
 where the second equality holds since, by construction, the compensator measure of $\widetilde\cP$
 is  $$\mathbf 1_{(0,S_0)}(s)\mathbf 1_{(0,1)}(r)\dd \tau(sz^{-1})\,\dd r\,\Pi(\dd u).$$
Utilizing \eqref{help23} and a change of variable in the second coordinate of $H$,
 we can further simplify the right-hand side to
 \begin{align*}
&E\left(\int_0^{S_0} \int_0^1 \int_\R \frac{1}{Y_{s-}} H(s,r/ Y_{s-},u) \, \dd s\,\dd r\,\Pi(\dd u) \right)\\
&\quad  = E\left(\int_0^{S_0} \int_0^{1/Y_{s-}}\int_\R  H(s,r,u) \,\dd s\,\dd r\,\Pi(\dd u) \right).
 \end{align*}
Similarly, applying Theorem II.1.8 of Jacod and Shiryaev \cite{JS} to $\cQ$, the second summand of
 {the right hand side of} \eqref{help21} equals
  \begin{align*}
   	E&\left(\int_0^\infty \int_0^\infty \int_\R
        H(s,r,u)(\mathbf 1_{\{rY_{s-}>1\}} + \mathbf 1_{\{Y_{s-}=0\}} )\,\cQ(\dd s,\dd r,\dd u)\right) \\
     &= E\left(\int_0^{S_0} \int_{1/Y_{s-}}^\infty\int_\R  H(s,r,u) \, \dd s \,\dd r\,\Pi(\dd u) \right)
         + E\left(\int_{S_0}^\infty \int_0^\infty\int_\R  H(s,r,u) \, \dd s \,\dd r\,\Pi(\dd u) \right).
   \end{align*}
Adding {the right hand sides of the above two equalities, by \eqref{help21}, we have} \eqref{help1}.

\smallskip

\textbf{Step 3d ($W$ and $\mathcal P$ are independent):}
This follows from Ikeda and Watanabe \cite[Chapter II, Theorem 6.3]{IW}, since
 the compensator measure of $\cP$, by Step 3c, is deterministic.

\smallskip

 \textbf{Step 4 (deriving the SDE (\ref{eqn}) - plugging-in):}
We first observe that, by \eqref{Ito_exp}, the time-changed process
 $Y_t= \eta_{\tau(tz^{-1})}$, $t\in[0,S_0)$, satisfies the integral equation
 \begin{align}\label{eqn:22}\begin{split}
  Y_t&= z +  \left(\gamma+\frac{\sigma^2}{2}+\int_{|u|\leq 1}(\ee^u-1-u)\Pi(\dd u)\right)
              \int_0^{\tau({tz^{-1}})} \eta_s\dd s + \sigma\int_0^{\tau(tz^{-1})} \eta_s \dd B_s\\
	  &\quad + \int_0^{\tau(tz^{-1})}\int_{|u|\leq 1}\eta_{s-}(\ee^u-1)(\mathcal{N}_0 - \mathcal{N}_0')(\dd s,\dd u)\\
      &\quad + \int_0^{\tau(tz^{-1})}\int_{|u|> 1}\eta_{s-}(\ee^u-1)\mathcal{N}_0 (\dd s,\dd u)\end{split}
 \end{align}
 for $t<S_0$.
In the following we replace the {last} four summands on an event having probability one
 in such a way that (\ref{eqn:22}) becomes {(\ref{ab})} for $t<S_0$ driven by $W$ and $\mathcal P$
 (i.e., replacing $B$ and $\cN_0$ by $W$ and $\cP$, respectively).

\smallskip

\textbf{Step 4a (transforming the drift):}
Using \eqref{help23}, the first summand simplifies to the desired form, since by a change of variable, we have
	\begin{align}\label{help2}
   		\int_0^{\tau({tz^{-1})}}  \eta_s\dd s
     		= \int_0^{ \tau({tz^{-1})}} z\exp(\xi_s)\dd s
     		=t, \quad t<S_0.
	\end{align}

\smallskip

\textbf{Step 4b (transforming the Brownian integral):}
First we check that almost surely
 \begin{align}\label{help3}
   \int_0^{ \tau({tz^{-1})}} \eta_s \dd B_s
  	 = \int_0^t\eta_{ \tau({uz^{-1})} } \dd B_{\tau({uz^{-1})}}
     = \int_0^t  Y_u \,\dd B_{\tau({uz^{-1})}},
    \qquad t\in[0,S_0).
 \end{align}
Note that, for all $t\geq 0$,
 \begin{align}\label{help_YOR}
  \int_0^t \eta_s^2\dd\langle B\rangle_s
    = \int_0^t {z^2}\ee^{2\xi_s}\dd s < +\infty
    \qquad \text{a.s.,}
 \end{align}
 since the L\'evy process $\xi$ does not explode in finite time.
We distinguish the cases according to $\xi$ drifting to $+\infty$, $-\infty$ or oscillating.
If $\xi$ drifts to $+\infty$ or oscillates, then $S_0=\infty$ almost surely and hence $\tau(tz^{-1})$ is finite
 almost surely for all $t\geq 0$.
By \eqref{help_YOR}, we can use Revuz and Yor \cite[Proposition V.1.5]{RY}
 with the following choices
 \[
  C_t:=\tau(tz^{-1}),  \qquad H_t:=\eta_t,  \qquad X_t:= B_t,
 \]
 to infer \eqref{help3}.
If $\xi$ drifts to $-\infty$, then $S_0<\infty$ almost surely and $C_t=\infty$ almost surely
 for $t\geq S_0$, which yield that one can not use directly Revuz and Yor \cite[Proposition V.1.5]{RY}.
However, we can proceed as follows:
For all $\varepsilon>0$, let us define $S_\varepsilon:=\inf\{t\geq 0 : Y_t<\varepsilon\}$.
Since we assumed that $\xi$ drifts to $-\infty$ we may use that
 $P(Y_{S_0-}=0)=1$ to deduce that
 $P(\lim_{\varepsilon\downarrow 0} S_\varepsilon =S_0)=1$.
Next, we define the modified time-change $\tau_\varepsilon(t):=\tau((t\wedge S_\varepsilon)z^{-1})$
 for any $t\geq 0$.
Then $\tau_\varepsilon$ is a stopping time in the sense of Revuz and Yor \cite[Definition V.1.2]{RY} and
 $\tau_\varepsilon(t)<\infty$ for all $t\geq 0$, since it remains constant for $t\geq S_\eps$.
By \eqref{help_YOR}, we can use Revuz and Yor \cite[Proposition V.1.5]{RY} to infer that for all $\varepsilon>0$
 \[
   \int_0^{ \tau_\varepsilon(t)} \eta_s \dd B_s
  	 = \int_0^t\eta_{ \tau_\varepsilon(s) } \dd B_{ \tau_\varepsilon(s) },
    \qquad t\geq 0.
 \]
Since $\tau_\varepsilon(t)=\tau(tz^{-1})$, $0\leq t\leq S_\varepsilon$, we have
 \[
   \int_0^{ \tau(tz^{-1})} \eta_s \dd B_s
      = \int_0^t \eta_{\tau(sz^{-1})} \dd B_{\tau(sz^{-1})},
      \qquad 0\leq t\leq S_\varepsilon.
 \]
Using that $P(\lim_{\varepsilon\downarrow 0}S_\varepsilon = S_0)=1$, we have \eqref{help3} also
 in the case of $\xi$ drifting to $-\infty$.
Then, by the definition of $W$ and using that $Y_{s-}\ne 0$, $0<s<S_0$, we have the almost sure identity
 \begin{align*}
   \int_0^{\tau(tz^{-1})}\eta_s \dd B_s
       = \int_0^t  Y_s  \,\dd B_{\tau({sz^{-1})}} =  \int_0^t \sqrt{ Y_s}\dd W_s ,  \qquad t\in[0,S_0).
 \end{align*}
This completes the arguments for the Brownian integral.
\smallskip

\textbf{Step 4c (transforming the integral for large jumps):} Next, we check that almost surely
  \begin{align}\label{help_large_jumps}
   \begin{split}
    &\int_0^{ \tau({tz^{-1})}}
     			\int_{|u|>1} \eta_{s-}(\ee^u-1)\mathcal{N}_0 (\dd s,\dd u)\\
    &\quad = \int_0^t\int_0^\infty\int_{|v|>1} \mathbf 1_{\{r Y_{s-}\leq 1\}}Y_{s-}(\ee^v-1)
                        \cP(\dd s,\dd r,\dd v)
   \end{split}
 \end{align}
 for all $t<S_0$.
By the definition of the integral with respect to the Poisson random measure {$\mathcal{N}_0$},
 the left-hand side of \eqref{help_large_jumps} equals
 \begin{align*}
    \sum_{\{n\in\N \;:\; \widetilde t_n \leq \tau(tz^{-1})\}}
           z\ee^{\xi_{\widetilde t_n-}}(\ee^{\widetilde \Delta_n} - 1)
            \mathbf 1_{\{ \vert\widetilde\Delta_n \vert\geq 1\}},
 \end{align*}
 where $(\widetilde t_n,\widetilde \Delta_n)$, $n\in\N$, is a labeling of the pairs associated to jump times
 and jump sizes of the L\'evy process $\xi$ such that $\widetilde t_n = \tau(t_nz^{-1})$
 and $\widetilde \Delta_n = \Delta_n$ (for the definition of $t_n$ and $\Delta_n$, see Step 3a).
This sum has only finitely many terms almost surely, since $\tau(tz^{-1})<\infty$, $t\in[0,S_0)$,
 and on each interval a L\'evy process has only finitely many jumps which are of magnitude
 {greater than or equal to 1.}
By the definition of $Y$ and $\cP$, the right-hand side of \eqref{help_large_jumps} equals
 the following sum
 \begin{align*}
   \sum_{\{n\in\N \;:\; t_n \leq t \}} z \ee^{\xi_{\tau(t_n z^{-1})}}(\ee^{\Delta_n} -1)
                      \mathbf 1_{ \{ \vert \Delta_n \vert \geq 1\} },
 \end{align*}
 where we used that $P(R_n\leq1)=1$, $n\in\N$.
The equality of the above two sums follows readily, since
 $\widetilde t_n = \tau(t_nz^{-1})$ and $\widetilde \Delta_n = \Delta_n$, $n\in\N$.

Note also that the additional Poisson random measure $\mathcal Q$ in the definition of $\cP$ is only added
 in order to get the compensator measure $\dd s\otimes \dd r \otimes \Pi(\dd u)$ (i.e., to make the rate fit);
 all  jumps according to $\mathcal Q$ do not influence the integral on the right-hand side
 of \eqref{help_large_jumps} as they are ruled out by the indicator.

\smallskip

\textbf{Step 4d (transforming the integral for small jumps):}
It remains to check the almost sure identity
 \begin{align}\label{help4}
  \begin{split}
    			&\int_0^{ \tau({tz^{-1})}}
     			\int_{|u|\leq 1} \eta_{s-}(\ee^u-1)(\mathcal{N}_0 - \mathcal{N}_0')(\dd s,\dd u)\\
      			&  = \int_0^t\int_0^\infty\int_{|v|\leq 1} \mathbf 1_{\{r Y_{s-}\leq 1\}}Y_{s-}(\ee^v-1)
                      (\cP - \cP^{'})(\dd s,\dd r,\dd v),        \quad 0\leq t < S_0.
   \end{split}
 \end{align}
The left- and right-hand sides of \eqref{help4} are square-integrable local martingales {in $t$}
 on the time interval $[0,\infty)$.
For the right-hand side this can be checked as in Step 1c, for the left-hand side one can argue
 as follows.
Namely, by Step 1a, we have $(\delta_m)_{m\in\N}$ is a sequence of stopping times such that
 $P(\lim_{m\to\infty}\delta_m=\infty)=1$ implying that $(\tau((\delta_m\wedge S_0)z^{-1}))_{m\in\N}$
 is a sequence of stopping times such that
 $P(\lim_{m\to\infty}\tau((\delta_m\wedge S_0)z^{-1})=\infty)=1$
 and hence it is enough to show that
 \begin{align*}
   E\left(\int_0^{\tau(tz^{-1})\wedge \tau((\delta_m\wedge S_0)z^{-1})}
          \int_{|u|\leq 1} \eta_s^2(\ee^u -1)^2\,\dd s\,\Pi(\dd u)
   \right)
  <\infty,\qquad m\in\N,\;t\geq 0,
 \end{align*}
 which (as in Step 1c) follows from $\int_{|u|\leq 1} (\ee^u-1)^2\,\Pi(\dd u) < \infty$
and
 \begin{align}\label{help25}
  \begin{split}
    & E\left(\int_0^{\tau(tz^{-1})\wedge \tau((\delta_m\wedge S_0)z^{-1})}
           \eta_s^2 \,\dd s\right)\\
   & \quad = E\left(\int_0^{\tau((t\wedge\delta_m\wedge S_0)z^{-1})}
           \eta_s^2 \,\dd s\right)
          =   E\left(\int_0^{t\wedge \delta_m\wedge S_0}
             Y_u \,\dd u\right)
           \leq tm,
  \end{split}
 \end{align}
 where the second equality follows by a change of variable using \eqref{help23}.
This yields that, by definition of the localized Poissonian integrals
 (see, e.g. Ikeda and Watanabe \cite[page 63]{IW}), for all $m\in\N$,
 \begin{align}\label{help_large_jumps_2}
    \int_0^{ \tau({tz^{-1})}\wedge \tau((\delta_m\wedge S_0)z^{-1})}
     			\int_{|u|\leq 1} \eta_{s-}(\ee^u-1)(\mathcal{N}_0 - \mathcal{N}_0')(\dd s,\dd u)
 \end{align}
 is the $L^2$-limit, as $\varepsilon\downarrow 0$, of
 \begin{align}\label{help_large_jumps_3}
   \int_0^{ \tau({tz^{-1})}\wedge \tau((\delta_m\wedge S_0)z^{-1})}
     			\int_{\varepsilon\leq |u|\leq 1} \eta_{s-}(\ee^u-1)(\mathcal{N}_0 - \mathcal{N}_0')(\dd s,\dd u);
  \end{align}
 and
 \begin{align}\label{help_large_jumps_4}
   \int_0^{t\wedge \delta_m}\int_0^\infty\int_{|v|\leq 1}
                      \mathbf 1_{\{r Y_{s-}\leq 1\}}Y_{s-}(\ee^v-1)
                      (\cP - \cP^{'})(\dd s,\dd r,\dd v)
 \end{align}
 is the $L^2$-limit, as $\varepsilon\downarrow 0$, of
 \begin{align}\label{help_large_jumps_5}
   \int_0^{t\wedge \delta_m}\int_0^\infty\int_{\varepsilon\leq |v|\leq 1}
         \mathbf 1_{\{r Y_{s-}\leq 1\}}Y_{s-}(\ee^v-1)
                      (\cP - \cP^{'})(\dd s,\dd r,\dd v).
 \end{align}
Note also that for all $m\in\N$ and $0<\varepsilon<1$, the integrals \eqref{help_large_jumps_3} and \eqref{help_large_jumps_5}
 can be written into two parts separating the integration with respect to the Poisson random measures
 and their intensities (see for instance Jacod and Shiryaev \cite[Proposition II.1.28]{JS}) since
 for all $t\geq 0$, $m\in\N$, and $0<\eps<1$, similarly to \eqref{help25}, we have
 \[
   E\left(\int_0^{\tau(tz^{-1})\wedge \tau((\delta_m\wedge S_0)z^{-1})}
          \int_{\varepsilon\leq |u|\leq 1} \eta_{s-}\vert\ee^u -1\vert\,\dd s\,\Pi(\dd u)
   \right)\leq t\int_{\eps\leq |u|\leq 1} |\ee^u-1|\,\Pi(\dd u)<\infty
 \]
 and
 \[
  E\left( \int_0^{t\wedge\delta_m}\int_0^\infty\int_{\varepsilon\leq |v|\leq 1}
      \mathbf 1_{\{r Y_{s-}\leq 1\}}Y_{s-}\vert\ee^v-1\vert
                         \dd s\,\dd r\,\Pi(\dd v)\right)\leq t\int_{\eps\leq |u|\leq 1} |\ee^u-1|\,\Pi(\dd u)
                 <\infty.
 \]
Hence, by the same arguments as in Step 4c and using also \eqref{help2}, for all $m\in\N$, \eqref{help_large_jumps_2}
 equals the $L^2$-limit, as $\varepsilon\downarrow 0$, of
 \begin{align*}
   &\sum_{ \{n\in\N \,:\, \widetilde t_n \leq  \tau((t\wedge \delta_m\wedge S_0) z^{-1})\}}
            \!\!\!\!\!\!\!z \ee^{\xi_{\widetilde t_n-}}(\ee^{\widetilde\Delta_n}-1)
              \mathbf 1_{\{\varepsilon \leq \vert \widetilde\Delta_n\vert\leq 1 \}}
  - \int_0^{\tau((t\wedge \delta_m\wedge S_0)z^{-1})}\!\!\!\!\! \eta_s \,\dd s
              \int_{\varepsilon \leq \vert u\vert\leq 1} \!\!\!(\ee^u -1)\Pi(\dd u) \\
    &=\sum_{\{n\in\N \,:\, \widetilde t_n \leq  \tau((t\wedge \delta_m\wedge S_0)z^{-1})\}}
             z\ee^{\xi_{\widetilde t_n-}}(\ee^{\widetilde\Delta_n}-1)
             \mathbf 1_{\{\varepsilon \leq \vert \widetilde\Delta_n\vert\leq 1 \}}
            - (t\wedge\delta_m\wedge S_0) \int_{\varepsilon \leq  \vert u\vert\leq 1} (\ee^u -1)\Pi(\dd u).
 \end{align*}
Similarly, using that
 $\cP'(\dd s,\dd r,\dd v) = \dd s\otimes\dd r\otimes\Pi(\dd v)$,
 we find that for all $m\in\N$, \eqref{help_large_jumps_4} equals the $L^2$-limit,
 as $\varepsilon\downarrow 0$, of
 \begin{align*}
   &\int_0^{t\wedge \delta_m}\int_0^\infty \int_\R
             \mathbf 1_{ \{rY_{s-}\leq 1\} }
              \mathbf 1_{ \{\varepsilon\leq \vert v\vert\leq 1\} } Y_{s-}
              (\ee^v -1) \cP(\dd s,\dd r,\dd v)\\
             &\phantom{=\;}
              - \int_0^{t\wedge\delta_m} \int_0^\infty \int_\R
              \mathbf 1_{ \{rY_{s-}\leq 1\} }
              \mathbf 1_{ \{\varepsilon\leq \vert v\vert\leq 1\} } Y_{s-}
              (\ee^v -1) \,\dd s\,\dd r\,\Pi(\dd v)\\
             & = \sum_{ \{n\in\N \,:\, t_n \leq t\wedge\delta_m \} }
                 \mathbf 1_{\{\varepsilon\leq \vert \Delta_n\vert\leq 1 \}}
                  Y_{t_n-}(\ee^{\Delta_n}-1)\\
             &\phantom{=\;}
                 + \int_0^{t\wedge\delta_m}\int_0^\infty\int_\R  \mathbf 1_{ \{rY_{s-}\leq 1\} }
                   \mathbf 1_{ \{\varepsilon\leq \vert v\vert\leq 1\} } Y_{s-}(\ee^v -1)
                   (\mathbf 1_{ \{ rY_{s-}>1 \} } +  \mathbf 1_{\{Y_{s-}=0 \} } )
                   \cQ(\dd s,\dd r,\dd v)\\
             &\phantom{=\;}
                 - (t\wedge\delta_m) \int_{\varepsilon\leq \vert v\vert\leq 1}(\ee^v -1)\Pi(\dd v)\\
             & = \sum_{ \{n\in\N \,:\, t_n \leq t\wedge\delta_m \} }
                 \mathbf 1_{\{\varepsilon\leq \vert \Delta_n\vert\leq 1 \}}
                  Y_{t_n-}(\ee^{\Delta_n}-1)
                 -  (t\wedge\delta_m) \int_{\varepsilon\leq \vert v\vert\leq 1}(\ee^v -1)\Pi(\dd v),
 \end{align*}
 where (for the sum) we used that $R_n/Y_{t_n-} \leq 1/Y_{t_n-}$, $n\in\N$, almost surely.
Using that $\widetilde t_n = \tau(t_nz^{-1})$, $n\in\N$, and $\widetilde\Delta_n=\Delta_n$, $n\in\N$,
 by the uniqueness part of the definition of a square-integrable local martingale, we have \eqref{help4}.

\smallskip

\textbf{Step 4e (putting the pieces together):}
Using Steps 4a,b,c,d and {that for all $0\leq t<S_0$,
 \[
    \int_0^t\int_0^\infty \int_{\vert v\vert>1}
             \mathbf 1_{ \{rY_{s-}\leq 1\} }  Y_{s-}
              (\ee^v -1) \cP'(\dd s,\dd r,\dd v)
       = t  \int_{\vert v\vert>1} (\ee^v -1) \Pi(\dd v)<\infty,
 \]
 which holds since $E(\ee^{\xi_1})<\infty$,}
 we have
\begin{align*}
     \begin{split}
		Y_t&=z+\left(\gamma+ \frac{\sigma^2}{2} + \int_\R(\ee^u -1 - u\mathbf 1_{ \{\vert u\vert\leq 1\} })
                      \,\Pi(\dd u)\right)t
              + \sigma\int_0^{t } \sqrt{Y_s} \dd W_s\\
           &\quad+\int_0^t\int_0^{\infty}\int_{\vert v\vert\leq 1} \mathbf 1_{\{r Y_{s-}\leq 1\}}Y_{s-}(\ee^v-1)
               (\mathcal P-\mathcal P')(\dd s,\dd r,\dd v),\qquad 0\leq t<S_0.
     \end{split}
\end{align*}
By Sato \cite[Theorem 25.17]{Sat} the drift coefficient takes the desired form $\log E(\ee^{\xi_1})$,
 and hence we have the SDE \eqref{eqn} {with $q=0$}.

\smallskip

\textbf{Step 5 (end of proof):} In Step 4 we proved that the last four integrals of (\ref{eqn:22})
 can be replaced on an event of full probability by the desired integrals.
Hence, $Y$ is a weak solution of (\ref{eqn}) {with $q=0$} up to the first hitting time {$S_0$}
 of zero.
Since the pathwise uniqueness of (\ref{eqn}) implies well-posedness of the corresponding martingale problem
 (see Step 2 in the proof of Proposition \ref{SM}), by Theorem 4.6.1 of Ethier and Kurtz \cite{EK}
 also the localized martingale problem is well-posed.
Hence, $Z^\dag$ and $Y$ (extended by $0$ after $S_0$) are equal in law.
\end{proof}

{\begin{proof}[Proof of Theorem \ref{thm:1}]
The proof is a combination of Propositions \ref{prop:uniqueness}, \ref{P:existence},
 \ref{prop:self_sim_index_1}, \ref{SM} and \ref{prop:reversed}.
\end{proof}}

\begin{proof}[Proof of Theorem \ref{thm:3}]
Under the conditions of the theorem, by \eqref{new1}
 the drift coefficient $\log E(\ee^{\frac{1}{a}\xi_1};\zeta>1)$ for the SDE (\ref{eqn})
 associated to $\frac{1}{a}\xi$ killed at rate $q$ is positive.
Part (a) follows from Theorem \ref{thm:1} and Proposition \ref{prop:reversed} combined
 with the equivalence of (\ref{b}).
For part (b), taking into account the discussion before the theorem, we only have to prove that
 the solution $Z$ constructed in Theorem \ref{thm:1} leaves zero continuously.
The uniqueness of the extension follows from Theorem 2 in Rivero \cite{R1}.
If $q=0$, solutions hit zero continuously, so that by SDE (\ref{eqn}), since the integrands of both stochastic integrals vanish if $Z_{s-}=0$, the positive constant drift pushes the solution into the interior of $\R_+$.
This implies that solutions leave zero continuously.
If $q>0$, then zero is only hit by a negative jump due to the killing integral.
Since {in this case} $Z_{s-}>0$, one might think that the integral driven by $\mathcal N$ jumps away from zero.
But since $\mathcal M$ and $\mathcal N$ are two independent Poisson random measures,
 {the Poissonian integrals corresponding to them} do not jump together almost surely.
Hence, {the integral driven by} $\mathcal N$ does not jump if {$Z_{s-}>0$ and} $Z_s=0$,
 and solutions leave zero again continuously due to the positive drift.
 \end{proof}

\begin{proof}[Proof of Theorem \ref{thm:4}]
First of all, note that if $\xi$ does not drift to $-\infty$, then the convexity of the Laplace exponent of $\xi$ trivially
 implies that  $\log E(\ee^{\frac{1}{a}\xi_1};\zeta>1)>0$ since it is null at zero. Further, due to (\ref{new1}), the same is true under the assumption of the theorem if $\xi$ drifts to $-\infty$. Hence, the drift coefficient
 $\log E(\ee^{\frac{1}{a}\xi_1};\zeta>1)$ for the SDE (\ref{eqn}) associated to $\frac{1}{a}\xi$ killed at rate $q$ is strictly positive.\\
As for the continuous case of Section \ref{sec:ex},
 we refer to a result from stochastic calculus such as Theorem IX.4.8 of
 Jacod and Shiryaev \cite{JS} applied in a rather trivial fashion, since we only change the initial conditions.
In their notation (compare also their Section III.2.c), in particular Theorem III.2.26 and Remark III.2.29),
 the sequence of martingale problems  corresponding to the jump type SDE (\ref{eqn}) has initial conditions
 $\eta^z=z$, $z\geq 0$, and time-homogeneous coefficients
	\begin{align*}
		K^z(y,A)&=\int_0^\infty\int_{\R} \mathbf 1_{A\setminus \{0\}}\big(\mathbf 1_{\{r y\leq 1\}}y
                  (\ee^u-1)\big)\,\dd r \,\Pi^a(\dd u)+q\int_0^\infty \mathbf 1_{A\setminus \{0\}}\big(-\mathbf 1_{\{r y\leq 1\}}y
                 \big)\,\dd r,
\end{align*}
$A\in \mathcal B(\R),\;y\geq 0$, and
\begin{align*}
		b^z(y)&={\log \E(\ee^{\frac{1}{a}\xi_1};\zeta>1)} - \int_{\vert v\vert >1} v \,K^z(y,\dd v),\\
		c^z(y)&=y,
	\end{align*}
	$y\geq 0$. By $\Pi^a$ we denote the L\'evy measure of the L\'evy process $\frac{1}{a}\xi$.
For completeness, we extend $c^z, b^z$ and $K^z$ by zero for $y<0$.
In what follows we check the conditions of Theorem IX.4.8 of Jacod and Shiryaev \cite{JS}.
{First, we note that for all Borel measurable and non-negative functions $f:\R\to\R_+$ with $f(0)=0$,
 we have
 \begin{align}\label{K_help}
   \int_\R f(v) K^z(y,\dd v) = \int_0^\infty\int_\R f\big(\mathbf 1_{\{ry\leq 1\}} y(\ee^u-1)\big)\dd r\,\Pi^a(\dd u)+q\int_0^\infty f\big(-\mathbf 1_{\{ry\leq 1\}} y\big)\dd r
 \end{align}
 for $y,z\geq 0$,
 in the sense that the left and right hand sides are both finite or infinite at the same time.
Indeed, if $f$ is a linear combination of indicator functions of Borel subsets of $\R$ {not containing $0$},
 then it follows by the definition of $K^z(y,\cdot)$,
 if $f$ is Borel measurable, non-negative and bounded, then it is a consequence of dominated convergence theorem,
 while for a general $f$ follows by monotone convergence.}
The coefficients do not depend on $z$, $\eta^z$ converges trivially to $\eta^0$ as $z\downarrow 0$ and,
 $b^0$ is continuous.
By Theorem \ref{thm:1} and Step 1 in the proof of Proposition \ref{SM},
 we have condition (IX.4.3) of Jacod and Shiryaev \cite{JS} is satisfied for $(b^0,c^0,K^0)$.
 Next, the modification $\tilde c^z$ is given by
 \begin{align*}
 	\tilde c^z(y)&=c^z(y)+{\int_{\R}} v^2\mathbf 1_{\{|v|\leq 1\}}K^z(y,\dd v)\\
     & =y+\int_{\R} y (\ee^u-1)^2 \mathbf 1_{\{ \vert y(\ee^u-1)\vert \leq 1\}}\,\Pi^a(\dd u)
          +qy {\mathbf 1_{\{ \vert y\vert \leq 1\}}}
 \end{align*}
 for $y\geq 0$, where the second equality follows by {\eqref{K_help} integrating out with respect to $r$.}
Hence $\tilde c^z$ is continuous due to dominated convergence and our assumption
that $\xi$ is spectrally negative, which implies $\int_{\R} (\ee^u-1)^2\,\Pi^a(\dd u)<\infty$.
Similarly, for all $z\geq 0$ and continuous bounded non-negative test-functions $g$
 vanishing around zero, by dominated convergence, the function
	\begin{align*}
		(0,\infty)\ni y\mapsto
            {\int_{\R}} g(v)K^z(y,\dd v)=\frac{1}{y}\int_{\R}  g(y (\ee^u-1))\Pi^a(\dd u)+q\frac{g(-y)}{y}
	\end{align*}
 is continuous having limit $0$ as $y\downarrow 0$.
To check condition (IX.4.9) in Jacod and Shiryaev \cite{JS}, we use
	\begin{align*}
		K^0(y,\{v:|v|>b\})=
                           \frac{1}{y}\int_{\R} \mathbf 1_{\{ \vert\ee^u-1\vert >b/y\}}\, \Pi^a(\dd u)+   q    \frac{\mathbf 1_{\{b<y\}}}{y},
                          \quad y>0,\; b>0,
	\end{align*}
 which, uniformly in ${y\in[0,N]}$ {for any $N>0$}, goes to zero as $b\to \infty$.
{Indeed, the second summand trivially tends to zero and for the first summand we obtain
\begin{align*}
  \frac{1}{y}\int_{\R} \mathbf 1_{\{ \vert\ee^u-1\vert >b/y\}}\, \Pi^a(\dd u) \leq   \frac{1}{y}\int_{\R} \frac{y^2(\ee^u-1)^2}{b^2}\, \Pi^a(\dd u)  = \frac{y}{b^2}\int_{\R} (\ee^u-1)^2\, \Pi^a(\dd u)
\end{align*}
which tends to zero as $b\to\infty$.}
\end{proof}

\noindent {\bf Acknowledgement.}
The authors would like to thank to Jean Bertoin, Zenghu Li, Pierre-Henri Cumenge and Lorenzo Zambotti
 for valuable discussions on the topic.
Special thanks to Mladen Savov for discussions on Proposition \ref{prop:uniqueness}.
We are grateful to the referee and one of the associate editors for valuable
comments that have led to an improvement of the manuscript.

\end{document}